\numberwithin{equation}{section}
\newcommand{\revision}[1]{#1}
\DeclareMathOperator{\R}{\mathbb{R}} 
\DeclareMathOperator{\Z}{\mathbb{Z}} 
\newcommand{\p}{\mathbb{P}} 
\newcommand{\E}{\mathbb{E}} 
\DeclareMathOperator*{\argmin}{arg\,min} 
\newcommand{\norm}[1]{\| #1 \|}
\newcommand{\val}{\mathsf{val}}
\def\cA{{\mathcal A}}
\def\cB{{\mathcal B}}
\def\cE{{\mathcal E}}
\def\cN{{\mathcal N}}
\def\cS{{\mathcal S}}
\def\cT{{\mathcal T}}
\newcommand{\cut}[1]{}
\newtheorem{theorem}{Theorem}[section]
\newtheorem{lemma}[theorem]{Lemma}
\newtheorem{proposition}[theorem]{Proposition}
\newtheorem{corollary}[theorem]{Corollary}
\newtheorem{conjecture}[theorem]{Conjecture}
\newtheorem{definition}[theorem]{Definition}
\newtheorem{assumption}[theorem]{Assumption}
\newtheorem{remark}[theorem]{Remark}
\begin{document}

\title{Quickest Inference of Network Cascades with Noisy Information\thanks{This work was supported by the United States National Science Foundation (NSF) under RAPID Grant IIS-2026982. This article was presented in part at the 54th Asilomar Conference on Signals, Systems and Computers \cite{sridhar_poor_sequential} and at the 2021 IEEE International Conference on Acoustics, Speech and Signal Processing (ICASSP) \cite{sridhar_poor_bayes}. 

A. Sridhar and H. V. Poor are with the Department of Electrical and Computer Engineering, Princeton University, Princeton, NJ 08544 (email: anirudhs@princeton.edu, poor@princeton.edu).}
}
\author{
	Anirudh Sridhar, H. Vincent Poor
}
\date{\today}

\maketitle


\begin{abstract}
\revision{We study the problem of estimating the source of a network cascade given a time series of noisy information about the spread. Initially, there is a single vertex affected by the cascade (the source) and the cascade spreads in discrete time steps across the network. Although the cascade evolution is hidden, one observes a noisy measurement of the evolution at each time step. Given this information, we aim to reliably estimate the cascade source as fast as possible. 

We investigate Bayesian and minimax formulations of the source estimation problem, and derive near-optimal estimators for simple cascade dynamics and network topologies. In the Bayesian setting, samples are taken until the error of the Bayes-optimal estimator falls below a threshold. For the minimax setting, we design a novel multi-hypothesis sequential probability ratio test. These optimal estimators require $\log \log n / \log (k - 1)$ observations for a $k$-regular tree network, and $(\log n)^{\frac{1}{\ell + 1}}$ observations for a $\ell$-dimensional lattice. We then discuss conjectures on source estimation in general topologies. Finally, we provide simulations which validate our theoretical results on trees and lattices, and illustrate the effectiveness of our methods for estimating the sources of cascades on Erd\H{o}s-R\'{e}nyi graphs.}
\end{abstract}

\section{Introduction}
Network-based interactions lie at the core of many dynamic systems, including social behavior, biological processes and wireless communications. Unfortunately, the decentralized nature of networks often make them susceptible to {\it cascading failures} in which behaviors or information originating from a small subset of nodes diffuse rapidly throughout the rest of the network. Examples include viral spread in contact networks (see e.g., \cite{brauer2012mathematical}), misinformation in social networks \cite{conspiracy, fourney2017geographic, fighting_spam,  tacchini2017some} and malware in cyber-physical networks \cite{KW91, MI17,  wang_viral_modeling, codered_propagation}. In all of these scenarios, the rapid spread of the cascade can have devastating effects. It is therefore of the utmost importance to track the cascade and contain it as fast as possible. 

A fundamental challenge in accomplishing this task is that information about the cascade is usually noisy or uncertain in real-time settings. To illustrate this point, suppose that a virus is spreading over a contact network. When the number of individuals is large, it may be infeasible to force everyone to quarantine, hence diagnostic tests may be administered to track and contain the spread. If there are not enough diagnostic tests to test the entire population at a given point in time, there is uncertainty in the status of individuals who are not tested. Moreover, diagnostic tests are typically not perfectly accurate, so even among the tested individuals there may be false positives and negatives.  

Nevertheless, by observing the results of many rounds of testing over time, it is natural to expect that one can accurately estimate the spread of the virus using the right testing and information aggregation strategies. On the other hand, if one waits too long to obtain reasonable estimates, the cascade will spread to a large subset of the population, which is undesirable. The goal of this work is to characterize inference algorithms which achieve the {\it optimal} tradeoff between the estimation error and the time until estimation. Moreover, we study how the structure of the underlying network influences the design and performance of such algorithms. 
\subsection{Summary of contributions}

\revision{For the most part, existing theoretical work on estimating the source of a network cascade takes the perspective of a \emph{reconstruction} problem: given a large, \emph{known} set of infected nodes, the goal is to identify the source among them \cite{KhimLoh15, fanti2015spy, racz_source_detection, wang2014, ying_book, shah2010detecting, shah2011rumors, shah2012finding, zhu_ying_1, zhu_ying_2}. In contrast, we study source estimation from the novel perspective of \emph{real-time inference}: by monitoring real-time signals from each node, we aim to find the source \emph{before} the number of affected nodes is large. The two paradigms of source estimation are fundamentally different, and as such, require drastically different models and methods.   
}

\revision{We mathematically formalize the task of real-time source estimation as follows.} Consider a statistical model of network cascades with noisy observations where, at discrete timesteps, each node produces a signal that is an independent sample from a pre-change distribution $Q_0$ if the node has not yet been affected by the cascade, else the signal is an independent sample from a post-change distribution $Q_1$. Initially, a single unknown vertex (the source) is affected by the cascade, and the cascade propagates to neighbors of affected vertices at each timestep. Our objective is to design algorithms that estimate the unknown source as fast as possible. \revision{We provide the first solution to this problem, to the best of our knowledge,} and derive optimal source estimators from Bayesian and minimax perspectives. 

To develop a concrete characterization of optimal source estimators, we focus on simple cascade dynamics and networks. The cascade dynamics we consider are {\it deterministic}: at each timestep, the cascade spreads to all neighbors of currently-affected nodes. We assume the network topology is either a $k$-regular tree or a $\ell$-dimensional lattice; we do so because such networks are simple to describe, they represent a diverse family of topologies, and they enjoy convenient symmetry properties which simplify our analysis considerably. We further assume that there is a known set of $n$ {\it candidate nodes} which contains the unknown source. When $n$ is large, we show that at least $\log \log n / \log (k-1)$ timesteps of noisy observations are required for reliable source estimation in $k$-regular trees, while at least $(\log n)^{\frac{1}{\ell + 1}}$ timesteps are required for $\ell$-dimensional lattices. 

We then derive {\it optimal} estimation algorithms whose performance matches the lower bounds described above. We show that the optimal algorithm in the Bayesian formulation of the source estimation problem is a simple procedure that continues to observe noisy observations of the cascade propagation until the Bayes-optimal estimator is sufficiently accurate. In the minimax formulation, we phrase source estimation as a $n$-ary hypothesis testing problem among the $n$ candidate nodes and show that a natural test based on likelihood ratios -- called the multi-hypothesis sequential probability ratio test (MSPRT) -- is optimal. Interestingly, the design of the MSPRT which matches the lower bounds can be viewed as a {\it multi-scale search procedure}: it simultaneously identifies the general area of the source while also performing a local, fine-grained analysis to obtain more precise estimates. 

Admittedly, our setting of deterministic cascade dynamics on regular trees or lattices is simplistic compared to more realistic cascade and network models \cite{cascades_review, configuration_model, random_geometric_graphs, BA99, Mah92,BA99,BRST01}. However, we find that our setting leads to a mathematically rich problem and serves as an important starting point for understanding the source estimation problem for more complex propagation dynamics and networks. On a more technical level, we present a mostly unified treatment of optimal source estimation algorithms on regular trees and lattices, with only minor differences between the two. This suggests that our methods could be generalized to describe optimal source estimators for {\it arbitrary topologies}, though this requires significantly more effort so we leave it to future work. We discuss in detail the potential extensions of our work to arbitrary topologies, providing conjectures on the structure and performance of optimal algorithms. \revision{Finally, we assess the performance of the estimators we develop through simulations. In addition to validating our theory for tree and lattice topologies, we show that our estimators perform well on natural models of random networks (Erd\H{o}s-R\'{e}nyi graphs). Strikingly, even when there is a moderate amount of noise in vertex signals, our estimators can reliably locate the cascade source in Erd\H{o}s-R\'{e}nyi graphs before 40 vertices are infected for networks with up to 2000 vertices. This provides strong evidence that our methods may be applicable quite broadly.}

\subsection{Related work}

\noindent {\bf Source estimation from a noiseless snapshot.} Perhaps the most well-known work on estimating the source of a cascade is by Shah and Zaman \cite{shah2010detecting, shah2011rumors, shah2012finding}. In their formulation of the problem, the cascade spreads {randomly} via the Susceptible-Infected process, and a single snapshot of the set of infected vertices is observed at a later point in time. They derive an expression for the maximum likelihood estimate of the source in trees and study properties of the estimator. Many authors have expanded on these ideas and methods in subsequent work, studying for instance the effect of multiple observations, \revision{multiple sources}, confidence sets for the source, different network models, and different cascade models \cite{KhimLoh15, fanti2015spy, racz_source_detection, wang2014, ying_book, zhu_ying_1, zhu_ying_2}. \revision{We emphasize that while this literature on source estimation is similar in spirit to the problem we consider in this paper, it is fundamentally different from modeling and algorithmic perspectives. For instance, the literature cited above is of a {\it static} nature, where we have a single (or a fixed number of) perfect-information snapshot(s) of a large cascade. On the other hand, we consider {\it dynamic} settings where we obtain noisy and incomplete measurements of a small but growing cascade. Moreover, the methods developed in the literature cited above (e.g., rumor centrality, Jordan centrality) have no obvious counterpart in our setting, since they are computed based on known infections. However, in the model of noisy, real-time measurements considered in this paper, it is impossible to know exactly which vertices are infected.}  \\

\noindent {\bf Cascade inference from a noisy time series.} A growing body of literature uses the data model \eqref{eq:public_signals} to perform inference of cascades, including detecting the presence of a cascade \cite{zou2020qd, ZouVeeravalli2018, ZVLT2018, Rovatsos2020quickest, Rovatsos2020heterogeneous, halme2021bayesian, qcd_empirical}, estimating the source \cite{sridhar_poor_sequential, sridhar_poor_bayes} and controlling its spread \cite{hoffman_thesis, hoffman_cost, active_screening}. The closest work to ours in terms of methods and analysis is by Zou, Veeravalli, Li and Towsley \cite{zou2020qd}, who studied the following quickest {\it detection} problem: a cascade spreads via unknown dynamics, and the goal is to stop sampling once the cascade affects a given number of vertices. Their test, which can be viewed as an adaptation of the CUSUM procedure, is agnostic to the spreading dynamics of the cascade and is optimal in the regime where samples are taken much frequently than the growth of the cascade. By contrast, we consider the regime of large networks and where samples are taken at a comparative rate to the growth of the cascade. Moreover, our results reveal the effect of the network topology on the performance of inference procedures, which is not the case in \cite{zou2020qd}. 

Finally, we remark that compared to our prior conference submissions on the source estimation problem \cite{sridhar_poor_sequential, sridhar_poor_bayes}, the current paper provides a unified and substantially more general solution. In particular, \cite{sridhar_poor_bayes} only provided a Bayesian solution for lattices and \cite{sridhar_poor_sequential} established results for the minimax setting under a somewhat unnatural, but mathematically simpler, constraint on the stopping time and estimator. 

\subsection{Notation}
\label{sec:notation}

Let $\R$ and $\Z$ denote the set of reals and integers, respectively. For a graph $G = (V, E)$, let $V$ denote the set of vertices and let $E$ denote the set of edges. For $u,v \in V$, $d(u,v)$ represents the shortest path distance between $u$ and $v$ in $G$. For $v \in V$ and a non-negative integer $s$, $\cN_v(s)$ is the $s$-hop neighborhood of $v$; that is, $\cN_v(s) : = \{ u \in V: d(u,v) \le s \}$. 

We utilize standard asymptotic notation throughout. In particular, for two functions $g(n)$ and $h(n)$, we say $g(n) \lesssim h(n)$ if there is $c > 0$ such that for $n$ sufficiently large, $g(n) \le c h(n)$. We say $g(n) \asymp h(n)$ (in words, $g(n)$ and $h(n)$ are orderwise equivalent) if and only if there are $c_1, c_2 > 0$ such that $c_1 h(n) \le g(n) \le c_2 h(n)$ for $n$ sufficiently large. We say $g(n) \sim h(n)$ (in words, $g(n)$ is equal to $h(n)$ up to first-order terms) if $\lim_{n \to \infty} g(n) / h(n) = 1$. 

\subsection{Organization}
The rest of the paper is organized as follows. In Section \ref{sec:model}, we formally define our model of cascade evolution with noisy observations, as well as the Bayesian and minimax optimality criteria. In Section \ref{sec:results}, we provide a description and overview of our results on optimal estimation in regular trees and lattices, as well as a discussion on how one might extend our techniques to general topologies. \revision{In Section \ref{sec:simulations}, we provide numerical results on the performance of optimal estimators from simulations on trees, lattices and Erd\H{o}s-R\'{e}nyi graphs.}The remaining sections are devoted to the proofs of our main results. The proofs of main results on the Bayesian setting are in Section \ref{sec:bayes}, and proofs for the main results in the minimax setting are in Section \ref{sec:minimax}. Sections \ref{sec:estimation_error} and \ref{sec:msprt} contain supporting results for the proofs in Sections \ref{sec:bayes} and \ref{sec:minimax}. We conclude in Section \ref{sec:conclusion}. Additional combinatorial results concerning the topology of regular trees and lattices can be found in Appendix \ref{sec:size_of_neighborhoods} and \ref{sec:geodesics}.

\section{Problem formulation}
\label{sec:model}

We begin by describing the most general formulation of the source estimation problem. Let $G$ be a graph with vertex set and edge set given by $V$ and $E$, respectively. Initially, a single vertex $v^* \in V$ is affected by the cascade; we call this vertex the {\it cascade source}. From $v^*$, the cascade spreads over time via the edges of the graph according to a known random or deterministic discrete-time process. Examples of cascade dynamics include variants of the susceptible-infected (SI) process, the independent cascade model and the linear threshold model (see \cite{cascades_review} and references therein).

For any $v \in V$ and any time index $t \ge 0$, let $x_v(t) \in \{0,1\}$ denote the {\it private state} of $v$, where $x_v(t) = 1$ if $v$ is affected by the cascade at time $t$, otherwise $x_v(t) = 0$. The private states are not observable, but the system instead monitors the {\it public signals} $\{y_u(t) \}_{u \in V}$, defined as
\begin{equation}
\label{eq:public_signals}
y_u(t) \sim \begin{cases}
Q_0 & x_u(t) = 0; \\
Q_1 & x_u(t) = 1, \\
\end{cases}
\end{equation}
where $Q_0$ and $Q_1$ are two mutually absolutely continuous probability measures. We can think of $y_u(t) \sim Q_0$ being typical behavior and $y_u(t) \sim Q_1$ as anomalous behavior caused by the cascade. As a shorthand, we denote $y(t) : = \{ y_u(t) \}_{u \in V}$ to be the collection of all public states at time $t$. See Figure \ref{fig:data_model} for an illustration of this data model. We remark that this data model has been studied in recent literature in the context of cascade source estimation \cite{sridhar_poor_sequential,sridhar_poor_bayes}, quickest detection of cascades \cite{zou2020qd, ZouVeeravalli2018, ZVLT2018, Rovatsos2020quickest, Rovatsos2020heterogeneous, halme2021bayesian, qcd_empirical}, and control of cascades \cite{hoffman_cost, active_screening, hoffman_thesis}. 

\begin{figure}[ht]
	\centering
	
	\def \svgwidth{0.5\columnwidth}
\begingroup%
  \makeatletter%
  \providecommand\color[2][]{%
    \errmessage{(Inkscape) Color is used for the text in Inkscape, but the package 'color.sty' is not loaded}%
    \renewcommand\color[2][]{}%
  }%
  \providecommand\transparent[1]{%
    \errmessage{(Inkscape) Transparency is used (non-zero) for the text in Inkscape, but the package 'transparent.sty' is not loaded}%
    \renewcommand\transparent[1]{}%
  }%
  \providecommand\rotatebox[2]{#2}%
  \newcommand*\fsize{\dimexpr\f@size pt\relax}%
  \newcommand*\lineheight[1]{\fontsize{\fsize}{#1\fsize}\selectfont}%
  \ifx\svgwidth\undefined%
    \setlength{\unitlength}{841.88976378bp}%
    \ifx\svgscale\undefined%
      \relax%
    \else%
      \setlength{\unitlength}{\unitlength * \real{\svgscale}}%
    \fi%
  \else%
    \setlength{\unitlength}{\svgwidth}%
  \fi%
  \global\let\svgwidth\undefined%
  \global\let\svgscale\undefined%
  \makeatother%
  \begin{picture}(1,0.70707071)%
    \lineheight{1}%
    \setlength\tabcolsep{0pt}%
    \put(0,0){\includegraphics[width=\unitlength,page=1]{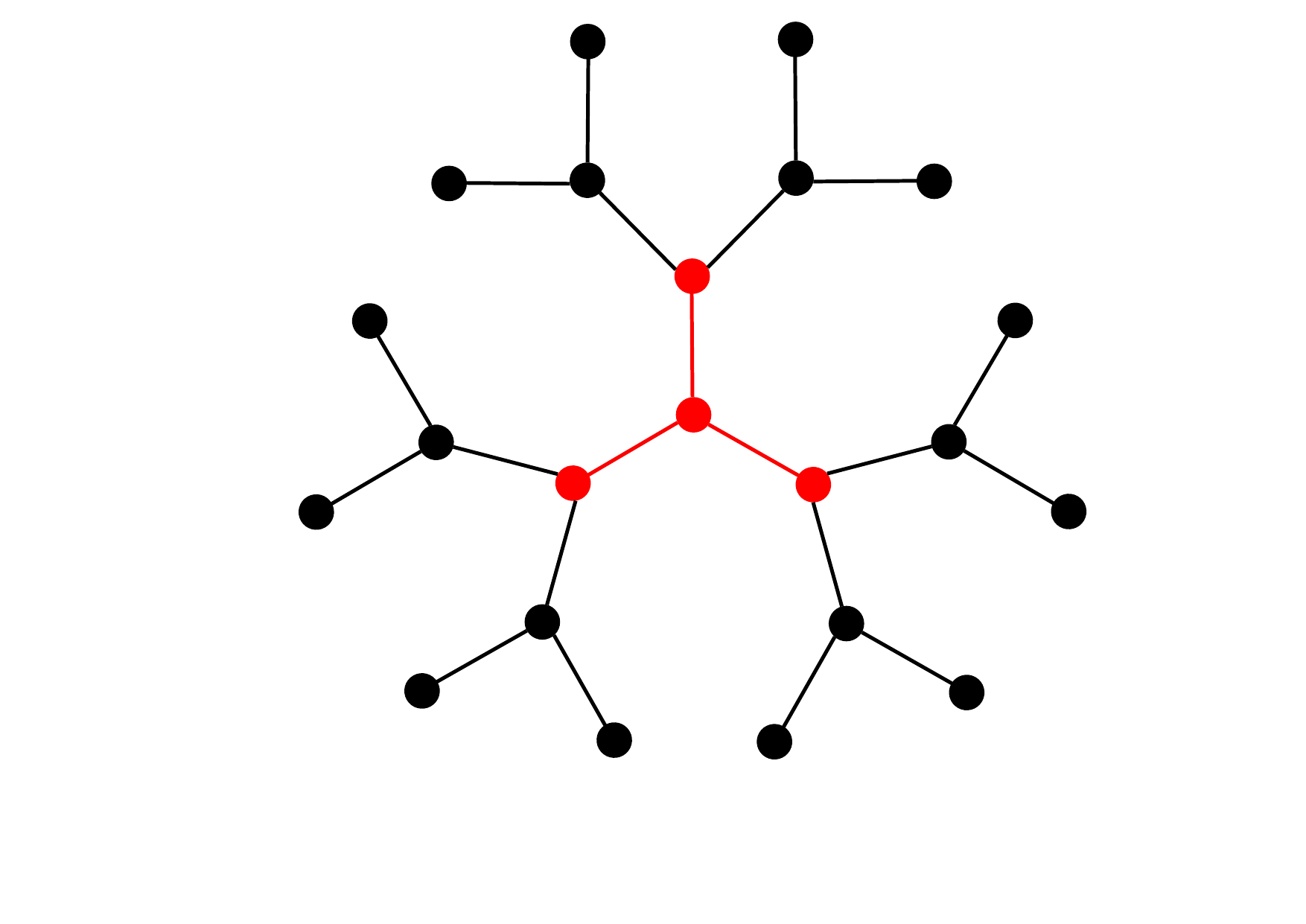}}%
    \put(0.55360562,0.39676643){\color[rgb]{0,0,0}\makebox(0,0)[lt]{\lineheight{1.25}\smash{\begin{tabular}[t]{l}$u$\end{tabular}}}}%
    \put(0.6053958,0.52164258){\color[rgb]{0,0,0}\makebox(0,0)[lt]{\lineheight{1.25}\smash{\begin{tabular}[t]{l}$v$\end{tabular}}}}%
    \put(0.10812687,0.04639006){\color[rgb]{0,0,0}\makebox(0,0)[lt]{\lineheight{1.25}\smash{\begin{tabular}[t]{l}\LARGE \color{red} $y_u(t) \sim Q_1$\end{tabular}}}}%
    \put(0.62798666,0.04638996){\color[rgb]{0,0,0}\makebox(0,0)[lt]{\lineheight{1.25}\smash{\begin{tabular}[t]{l}\LARGE $y_v(t) \sim Q_0$\end{tabular}}}}%
  \end{picture}%
\endgroup%

	\caption{Illustration of the data model at a given time $t$. The nodes in red have been affected by the cascade, and the black nodes are unaffected, though this information is hidden from the observer. The public signals of the red nodes are sampled from $Q_1$, while the public signals of the black nodes are sampled from $Q_0$.}
	\label{fig:data_model}
\end{figure}

\begin{remark}
\label{remark:data_model}
The data model in \eqref{eq:public_signals} can capture a variety of realistic scenarios. In the context of viral spread for instance, a common symptom of sickness is a fever. If the public signals correspond to the body temperature of individuals in the population, one may expect that $y_v(t)$ will be close to the typical body temperature of the individual represented by vertex $v$ if they do not carry the virus, else $y_v(t)$ is expected to be significantly higher if the individual does carry the virus. 

Another practical example of \eqref{eq:public_signals} is {\it diagnostic testing with errors}, which is used for malware detection in computer networks \cite{malware_detection} and tracking the spread of infectious diseases \cite{viral_test}. Suppose that at a given point in time, each vertex is given a diagnostic test with probability $p$, independently over all vertices. If a test is taken, the output is either 0 (the vertex is not affected) or 1 (the vertex is affected). With probability $\epsilon$, the result of the test will be incorrect. To formulate this in terms of \eqref{eq:public_signals}, let the support of $Q_0$ and $Q_1$ be $\{0, 1, \times\}$, where 0 indicates a test result of 0, 1 indicates a test result of 1, and $\times$ indicates that a test was not taken. The distributions $Q_0 : = (q_0(0), q_0(1), q_0(\times))$ and $Q_1 : = (q_1(0), q_1(1), q_1(\times))$ are given by
$$
\begin{cases}
q_0(0) = p(1 - \epsilon) & \\
q_0(1) = p\epsilon & \\
q_0(\times) = 1 - p
\end{cases}
\qquad 
\text{and} 
\qquad
\begin{cases}
q_1(0) = p\epsilon & \\
q_1(1) = p(1 - \epsilon) & \\
q_1(\times) = 1 - p.
\end{cases}
$$
\end{remark}

Given the data model \eqref{eq:public_signals}, the problem of estimating the cascade source can be phrased as a sequential multi-hypothesis testing problem: given the collection of hypotheses $\{H_v \}_{v \in V}$ where $H_v$ is the hypothesis that $v$ is the source, our goal is to output a hypothesis with a small probability of error. At the same time, it is also important that we come to a decision as fast as possible in order to minimize the number of vertices affected by the cascade. This reveals a fundamental tradeoff: when more samples are taken, one can obtain more reliable estimates of the source at the cost of allowing the cascade to spread further. An {\it optimal} procedure will achieve the best possible tradeoff between the estimation error and the number of samples needed. 

We shall proceed by formalizing these ideas. Observe that any source estimator can be represented by the pair $(T, \widehat{\mathbf{v}})$, where $T$ is a stopping time indicating when to stop sampling and $\widehat{\mathbf{v}} : = \{ \widehat{v}(t) \}_{t \ge 0}$ is a sequence of source estimators so that $\widehat{v}(t)$ is the estimate of the source given the data at time $t$. The final source estimate produced by $(T, \widehat{\mathbf{v}})$ is $\widehat{v}(T)$. We shall also assume that a {\it candidate set} $U \subset V$ is known, so that the unknown source is an element of $U$. We remark that the size of $U$, denoted by $|U|$, measures in a sense the initial uncertainty around the source location. As a matter of notation, we let $\p_v$ be the probability measure corresponding to the hypothesis $H_v$ ($v$ is the source). Similarly, $\E_v$ denotes the expectation with respect to the measure $\p_v$. If the location of the source is given by a probability distribution $\pi : = \{\pi_v\}_{v \in V}$ (where $\pi_v$ is the probability that the source is $v$), we denote $\p_\pi$ and $\E_\pi$ to be the probability measure and expectation operator with respect to $\pi$, respectively. Formally, we may write 
\begin{equation}
\label{eq:prob_expectation_definition}
\p_\pi(\cdot) := \sum\limits_{v \in V} \pi_v \p_v(\cdot) \qquad \text{and} \qquad \E_\pi[\cdot] : = \sum\limits_{v \in V} \pi_v \E_v [ \cdot ].
\end{equation}
\revision{We remark that often in this paper, we will consider the operator $\E_{\pi(t)}[ \cdot ]$ where $\pi(t)$ is the posterior distribution of the source after observing the public signals $y(0), \ldots, y(t)$. In such a case, $\E_{\pi(t)}[ \cdot ]$ would be a random variable, since it is equal to the conditional expectation $\E[ \cdot \vert y(0), \ldots, y(t) ]$. 
}

We next define the performance metrics used the evaluate the effectiveness of a source estimator. For a source estimator $(T, \widehat{\mathbf{v}})$, we shall study the {\it expected number of samples}, given by $\E_v [ T]$ when $v$ is the source. The {\it estimation error} is the expected distance between $\widehat{v}(T)$ and the source, given by $\E_v [ d(v, \widehat{v}(T)) ]$ when $v$ is the source. \revision{Here, we recall from Section \ref{sec:notation} that $d(\cdot, \cdot)$ denotes the shortest-path distance between two vertices in $G$.}

We study two natural ways to capture the tradeoff between estimation error and the expected number of samples. \\

\noindent {\it A Bayesian perspective.} Denote the source vertex by $v^*$, and suppose that the prior distribution for the source is uniform over the elements of the candidate set $U$; we denote this prior by $\pi(U)$. We say that the optimal procedure solves the following optimization problem:
\begin{equation}
\label{eq:bayes_opt}
\inf\limits_{T, \widehat{\mathbf{v}}} \E_{\pi(U)} \left [ d(v^*, \widehat{v}(T) ) + T \right] \revision{= \inf_{T, \widehat{\mathbf{v}}} \frac{1}{|U|} \sum_{v \in U} \E_v \left[ d(v, \widehat{v}(T) ) + T \right]},
\end{equation}
where we recall that $\E_{\pi(U)}$ denotes the expectation operator with respect to the measure $\pi(U)$. In words, \eqref{eq:bayes_opt} is the sum of the estimation error and the expected number of samples. If only the first term in \eqref{eq:bayes_opt} was present, the optimal strategy would be to set $T = \infty$, since more samples can only help in bringing down the estimation error. On the other hand, if only the second term in \eqref{eq:bayes_opt} was present, the optimal strategy would be to set $T = 0$. The estimator that solves \eqref{eq:bayes_opt} therefore achieves the best tradeoff between the two extremes. We remark that it is standard in Bayesian formulations of sequential testing problems to minimize the sum of the error and expected number of samples \cite{poor_hadjiliadis_2008, poor_detection_estimation}. \revision{Furthermore, we remark that one may consider other ways to quantify the tradeoff between estimation error and time -- for instance, by replacing $T$ with $h(T)$ for some increasing function $h$. While we focus on the formulation \eqref{eq:bayes_opt} for simplicity and ease of exposition, the methods we develop can also handle a large class of functions $h$.} \\

\noindent {\it A minimax perspective.} As an alternative to the Bayesian approach, one can formalize the tradeoff between the estimation error and expected number of samples via the following optimization problem: 
\begin{equation}
\label{eq:np}
\inf\limits_{T, \widehat{\mathbf{v}}} \max\limits_{v \in U} \E_v [ T] \qquad \text{subject to } \max\limits_{v \in U} \E_v [ d(v, \widehat{v}(T) ) ] \le \alpha,
\end{equation} 
where $\max_{v \in U} \E_v [ d(v, \widehat{v}(T)) ]$ is the {\it worst-case estimation error}, $\alpha$ is a specified bound on the {worst-case estimation error} and $\max_{v \in U} \E_v [ T]$ is the {\it worst-case expected runtime} of the procedure. As in the Bayesian case, we may consider two extremes. When $\alpha = \infty$, the optimal choice is $T = 0$, whereas when $\alpha = 0$ the optimal choice is $T = \infty$.\footnote{More precisely, if there exists an estimator $\widehat{\mathbf{v}}$ such that $\widehat{v}(t) \to v^*$ as $t \to \infty$, then the stopping time $T = \infty$ is optimal. If such an estimator does not exist, there is no feasible solution to \eqref{eq:np} when $\alpha = 0$.} For intermediate values of $\alpha$, the optimal algorithm indeed achieves a tradeoff between the estimation error and the worst-case expected runtime.

\section{Results}
\label{sec:results}

The goal of our work is to characterize optimal estimators based on the formulations in  \eqref{eq:bayes_opt} and \eqref{eq:np}. We are particularly interested in how the structure and performance of optimal estimators depend on the network topology. In order to provide a tractable theoretical analysis, we focus on simple networks and cascade dynamics. The cascade dynamics we consider is outlined in the following assumption.  

\begin{assumption}[Cascade dynamics]
\label{as:cascade}
Initially, a single vertex $v^*$ (the source) is affected by the cascade. The cascade then spreads deterministically in discrete time steps, so that vertex $v$ is affected by the cascade at time $t$ if and only if $d(v, v^*) \le t$. 
\end{assumption}

\noindent We consider two classes of networks -- regular trees and lattices -- which are defined formally below. 

\begin{definition}[Infinite $k$-regular tree]
\label{defn:tree}
Let $v_r$ be a designed root vertex, and let $\cT_k(1)$ be the tree with $k$ leaves attached to $v_r$. Given $\cT_k(m)$, we construct $\cT_k(m + 1)$ by attaching $k-1$ leaves to each leaf in $\cT_k(m)$. The infinite $k$-regular tree $\cT_k$ is the limiting graph obtained when $m \to \infty$; that is, $(u,v)$ is an edge in $\cT_k$ if and only if $(u,v)$ is an edge in $\cT_k(m)$ for some positive integer $m$. 
\end{definition}

\begin{definition}[Infinite $\ell$-dimensional lattice]
\label{defn:lattice}
Label elements of the vertex set by $\Z^\ell$. There is an edge between vertices $u,v$ in the infinite $\ell$-dimensional lattice if and only if $\sum_{i= 1}^\ell |u_i - v_i| = 1$.\footnote{The 2-regular tree $\cT_2$ is the same as the 1-dimensional lattice. Henceforth, we shall identify $\cT_2$ as the 1-dimensional lattice and always consider $k$-regular trees with $k \ge 3$. Indeed, from our analysis, it can be seen that the relevant properties of $\cT_2$ make the graph most naturally associated with the class of lattices.  }
\end{definition}

\begin{figure}[t]
\centering
\begin{subfigure}{0.3 \textwidth}

	\def \svgwidth{1.0\columnwidth}
\begingroup%
  \makeatletter%
  \providecommand\color[2][]{%
    \errmessage{(Inkscape) Color is used for the text in Inkscape, but the package 'color.sty' is not loaded}%
    \renewcommand\color[2][]{}%
  }%
  \providecommand\transparent[1]{%
    \errmessage{(Inkscape) Transparency is used (non-zero) for the text in Inkscape, but the package 'transparent.sty' is not loaded}%
    \renewcommand\transparent[1]{}%
  }%
  \providecommand\rotatebox[2]{#2}%
  \newcommand*\fsize{\dimexpr\f@size pt\relax}%
  \newcommand*\lineheight[1]{\fontsize{\fsize}{#1\fsize}\selectfont}%
  \ifx\svgwidth\undefined%
    \setlength{\unitlength}{841.88976378bp}%
    \ifx\svgscale\undefined%
      \relax%
    \else%
      \setlength{\unitlength}{\unitlength * \real{\svgscale}}%
    \fi%
  \else%
    \setlength{\unitlength}{\svgwidth}%
  \fi%
  \global\let\svgwidth\undefined%
  \global\let\svgscale\undefined%
  \makeatother%
  \begin{picture}(1,0.70707071)%
    \lineheight{1}%
    \setlength\tabcolsep{0pt}%
    \put(0,0){\includegraphics[width=\unitlength,page=1]{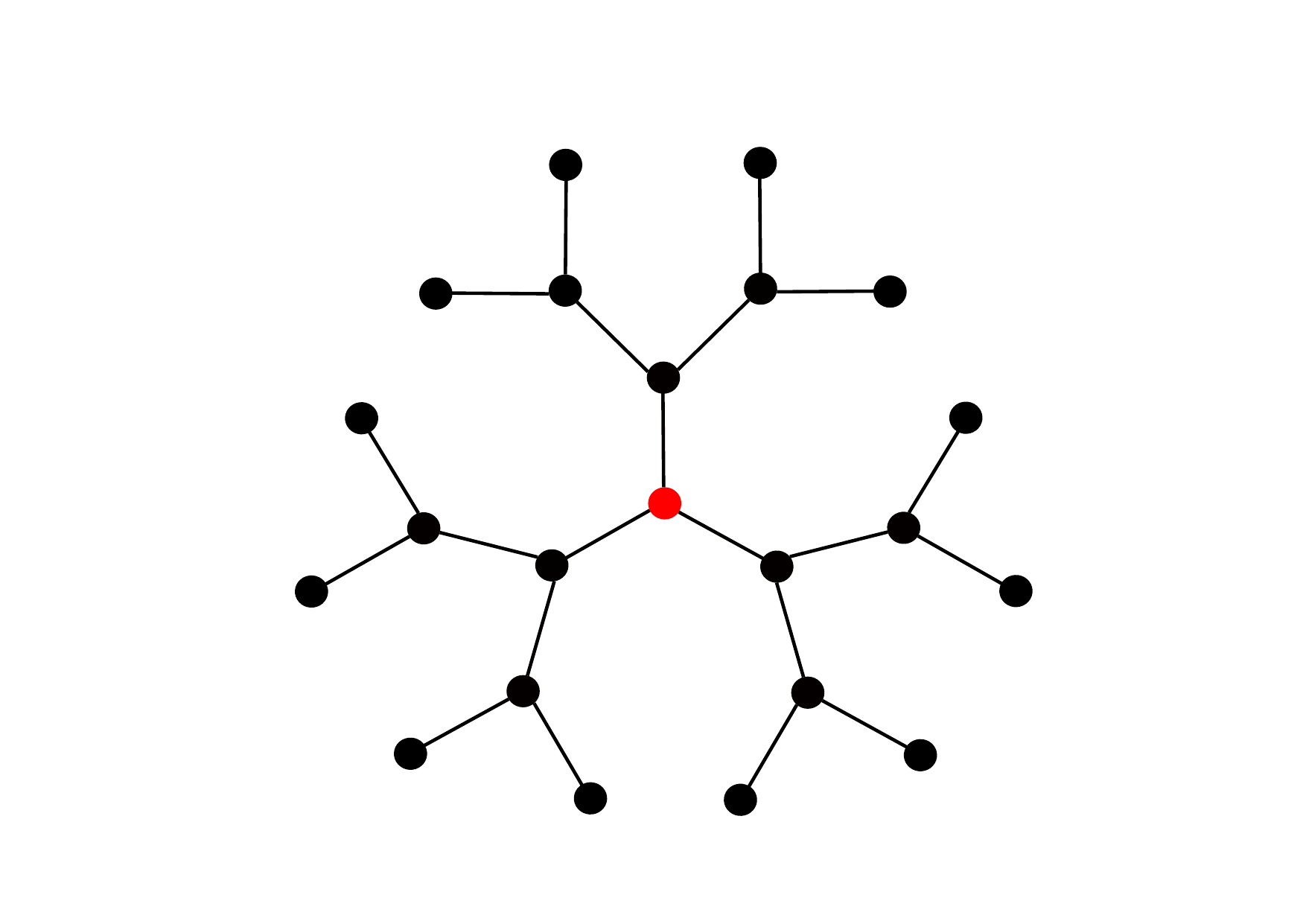}}%
    \put(0.53006259,0.32828801){\color[rgb]{0,0,0}\makebox(0,0)[lt]{\lineheight{1.25}\smash{\begin{tabular}[t]{l}$v^*$\end{tabular}}}}%
    \put(0.58852657,0.42470626){\color[rgb]{0,0,0}\makebox(0,0)[lt]{\lineheight{1.25}\smash{\begin{tabular}[t]{l}$v$\end{tabular}}}}%
    \put(0.04485808,0.00006766){\color[rgb]{0,0,0}\makebox(0,0)[lt]{\lineheight{1.25}\smash{\begin{tabular}[t]{l}\large \color{red} $y_{v^*}(0) \sim Q_1$\end{tabular}}}}%
    \put(0.51623854,0.0000673){\color[rgb]{0,0,0}\makebox(0,0)[lt]{\lineheight{1.25}\smash{\begin{tabular}[t]{l}\large $y_v(0) \sim Q_0$\end{tabular}}}}%
  \end{picture}%
\endgroup%

\caption{$t = 0$}
\end{subfigure}
\begin{subfigure}{0.3 \textwidth}

	\def \svgwidth{1.0\columnwidth}
\begingroup%
  \makeatletter%
  \providecommand\color[2][]{%
    \errmessage{(Inkscape) Color is used for the text in Inkscape, but the package 'color.sty' is not loaded}%
    \renewcommand\color[2][]{}%
  }%
  \providecommand\transparent[1]{%
    \errmessage{(Inkscape) Transparency is used (non-zero) for the text in Inkscape, but the package 'transparent.sty' is not loaded}%
    \renewcommand\transparent[1]{}%
  }%
  \providecommand\rotatebox[2]{#2}%
  \newcommand*\fsize{\dimexpr\f@size pt\relax}%
  \newcommand*\lineheight[1]{\fontsize{\fsize}{#1\fsize}\selectfont}%
  \ifx\svgwidth\undefined%
    \setlength{\unitlength}{841.88976378bp}%
    \ifx\svgscale\undefined%
      \relax%
    \else%
      \setlength{\unitlength}{\unitlength * \real{\svgscale}}%
    \fi%
  \else%
    \setlength{\unitlength}{\svgwidth}%
  \fi%
  \global\let\svgwidth\undefined%
  \global\let\svgscale\undefined%
  \makeatother%
  \begin{picture}(1,0.70707071)%
    \lineheight{1}%
    \setlength\tabcolsep{0pt}%
    \put(0,0){\includegraphics[width=\unitlength,page=1]{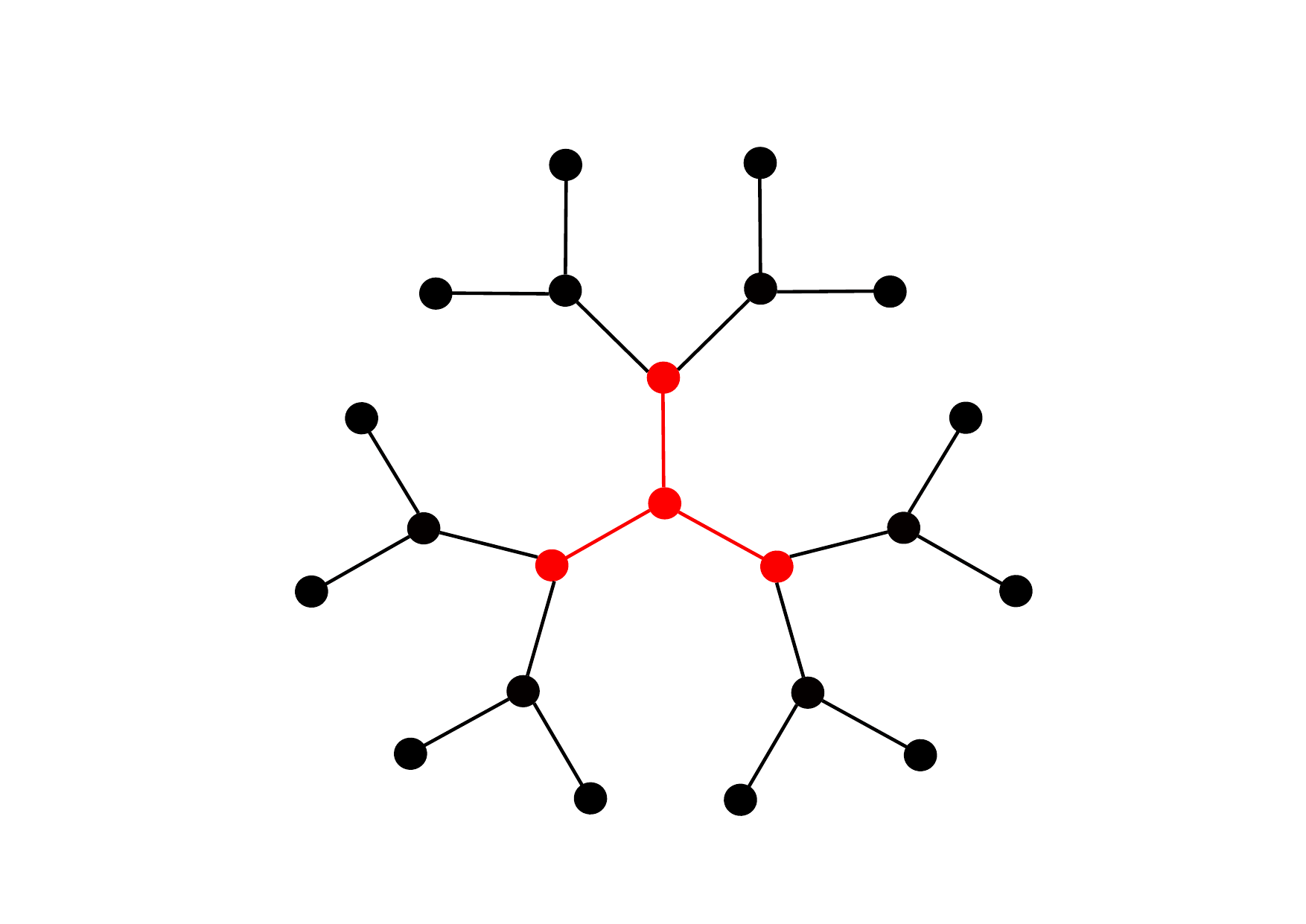}}%
    \put(0.53006259,0.32828801){\color[rgb]{0,0,0}\makebox(0,0)[lt]{\lineheight{1.25}\smash{\begin{tabular}[t]{l}$v^*$\end{tabular}}}}%
    \put(0.58852657,0.42470626){\color[rgb]{0,0,0}\makebox(0,0)[lt]{\lineheight{1.25}\smash{\begin{tabular}[t]{l}$v$\end{tabular}}}}%
    \put(0.04485808,0.00006766){\color[rgb]{0,0,0}\makebox(0,0)[lt]{\lineheight{1.25}\smash{\begin{tabular}[t]{l}\large \color{red} $y_{v^*}(1) \sim Q_1$\end{tabular}}}}%
    \put(0.51623854,0.0000673){\color[rgb]{0,0,0}\makebox(0,0)[lt]{\lineheight{1.25}\smash{\begin{tabular}[t]{l}\large $y_v(1) \sim Q_0$\end{tabular}}}}%
  \end{picture}%
\endgroup%

\caption{$t = 1$}
\end{subfigure}
\begin{subfigure}{0.3 \textwidth}

	\def \svgwidth{1.0\columnwidth}
\begingroup%
  \makeatletter%
  \providecommand\color[2][]{%
    \errmessage{(Inkscape) Color is used for the text in Inkscape, but the package 'color.sty' is not loaded}%
    \renewcommand\color[2][]{}%
  }%
  \providecommand\transparent[1]{%
    \errmessage{(Inkscape) Transparency is used (non-zero) for the text in Inkscape, but the package 'transparent.sty' is not loaded}%
    \renewcommand\transparent[1]{}%
  }%
  \providecommand\rotatebox[2]{#2}%
  \newcommand*\fsize{\dimexpr\f@size pt\relax}%
  \newcommand*\lineheight[1]{\fontsize{\fsize}{#1\fsize}\selectfont}%
  \ifx\svgwidth\undefined%
    \setlength{\unitlength}{841.88976378bp}%
    \ifx\svgscale\undefined%
      \relax%
    \else%
      \setlength{\unitlength}{\unitlength * \real{\svgscale}}%
    \fi%
  \else%
    \setlength{\unitlength}{\svgwidth}%
  \fi%
  \global\let\svgwidth\undefined%
  \global\let\svgscale\undefined%
  \makeatother%
  \begin{picture}(1,0.70707071)%
    \lineheight{1}%
    \setlength\tabcolsep{0pt}%
    \put(0,0){\includegraphics[width=\unitlength,page=1]{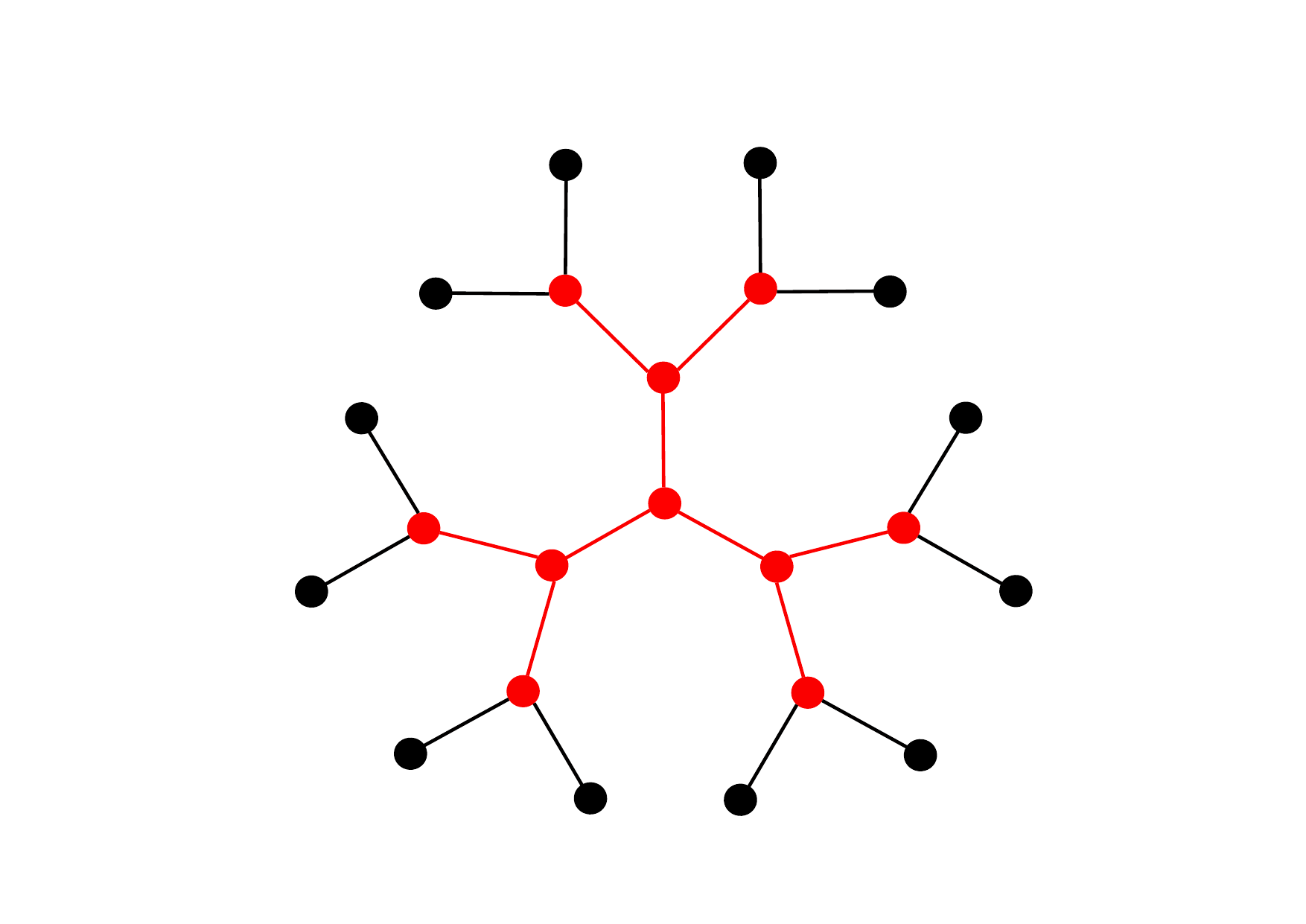}}%
    \put(0.53006259,0.32828801){\color[rgb]{0,0,0}\makebox(0,0)[lt]{\lineheight{1.25}\smash{\begin{tabular}[t]{l}$v^*$\end{tabular}}}}%
    \put(0.58852657,0.42470626){\color[rgb]{0,0,0}\makebox(0,0)[lt]{\lineheight{1.25}\smash{\begin{tabular}[t]{l}$v$\end{tabular}}}}%
    \put(0.04485808,0.00006766){\color[rgb]{0,0,0}\makebox(0,0)[lt]{\lineheight{1.25}\smash{\begin{tabular}[t]{l}\large \color{red} $y_{v^*}(2) \sim Q_1$\end{tabular}}}}%
    \put(0.51623854,0.0000673){\color[rgb]{0,0,0}\makebox(0,0)[lt]{\lineheight{1.25}\smash{\begin{tabular}[t]{l}\color{red}\large $y_v(2) \sim Q_1$\end{tabular}}}}%
  \end{picture}%
\endgroup%

\caption{$t=2$}
\end{subfigure} 
\begin{subfigure}{0.3\textwidth}

	\def \svgwidth{1.0\columnwidth}
\begingroup%
  \makeatletter%
  \providecommand\color[2][]{%
    \errmessage{(Inkscape) Color is used for the text in Inkscape, but the package 'color.sty' is not loaded}%
    \renewcommand\color[2][]{}%
  }%
  \providecommand\transparent[1]{%
    \errmessage{(Inkscape) Transparency is used (non-zero) for the text in Inkscape, but the package 'transparent.sty' is not loaded}%
    \renewcommand\transparent[1]{}%
  }%
  \providecommand\rotatebox[2]{#2}%
  \newcommand*\fsize{\dimexpr\f@size pt\relax}%
  \newcommand*\lineheight[1]{\fontsize{\fsize}{#1\fsize}\selectfont}%
  \ifx\svgwidth\undefined%
    \setlength{\unitlength}{841.88976378bp}%
    \ifx\svgscale\undefined%
      \relax%
    \else%
      \setlength{\unitlength}{\unitlength * \real{\svgscale}}%
    \fi%
  \else%
    \setlength{\unitlength}{\svgwidth}%
  \fi%
  \global\let\svgwidth\undefined%
  \global\let\svgscale\undefined%
  \makeatother%
  \begin{picture}(1,0.70707071)%
    \lineheight{1}%
    \setlength\tabcolsep{0pt}%
    \put(0,0){\includegraphics[width=\unitlength,page=1]{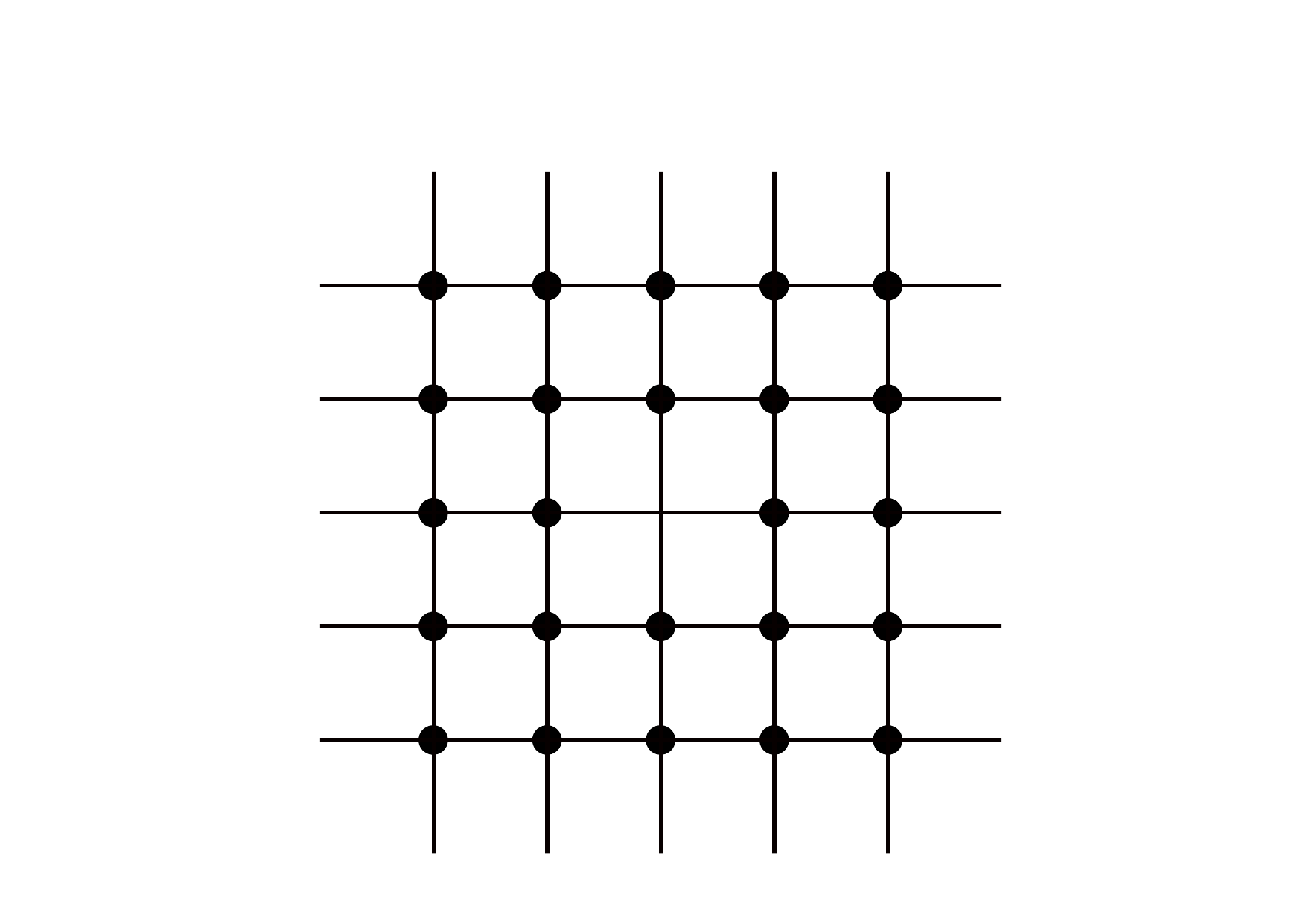}}%
    \put(0.0676858,-0.02472474){\color[rgb]{0,0,0}\makebox(0,0)[lt]{\lineheight{1.25}\smash{\begin{tabular}[t]{l}\large \color{red} $y_{v^*}(0) \sim Q_1$\end{tabular}}}}%
    \put(0.53906625,-0.0247251){\color[rgb]{0,0,0}\makebox(0,0)[lt]{\lineheight{1.25}\smash{\begin{tabular}[t]{l}\large $y_v(0) \sim Q_0$\end{tabular}}}}%
    \put(0.50799547,0.25479839){\color[rgb]{0,0,0}\makebox(0,0)[lt]{\lineheight{1.25}\smash{\begin{tabular}[t]{l}$v^*$\end{tabular}}}}%
    \put(0.60289286,0.41844012){\color[rgb]{0,0,0}\makebox(0,0)[lt]{\lineheight{1.25}\smash{\begin{tabular}[t]{l}$v$\end{tabular}}}}%
    \put(0,0){\includegraphics[width=\unitlength,page=2]{propagation_observation_lattices_t0.pdf}}%
  \end{picture}%
\endgroup%

\caption{$t=0$}
\end{subfigure}
\begin{subfigure}{0.3\textwidth}

	\def \svgwidth{1.0\columnwidth}
\begingroup%
  \makeatletter%
  \providecommand\color[2][]{%
    \errmessage{(Inkscape) Color is used for the text in Inkscape, but the package 'color.sty' is not loaded}%
    \renewcommand\color[2][]{}%
  }%
  \providecommand\transparent[1]{%
    \errmessage{(Inkscape) Transparency is used (non-zero) for the text in Inkscape, but the package 'transparent.sty' is not loaded}%
    \renewcommand\transparent[1]{}%
  }%
  \providecommand\rotatebox[2]{#2}%
  \newcommand*\fsize{\dimexpr\f@size pt\relax}%
  \newcommand*\lineheight[1]{\fontsize{\fsize}{#1\fsize}\selectfont}%
  \ifx\svgwidth\undefined%
    \setlength{\unitlength}{841.88976378bp}%
    \ifx\svgscale\undefined%
      \relax%
    \else%
      \setlength{\unitlength}{\unitlength * \real{\svgscale}}%
    \fi%
  \else%
    \setlength{\unitlength}{\svgwidth}%
  \fi%
  \global\let\svgwidth\undefined%
  \global\let\svgscale\undefined%
  \makeatother%
  \begin{picture}(1,0.70707071)%
    \lineheight{1}%
    \setlength\tabcolsep{0pt}%
    \put(0,0){\includegraphics[width=\unitlength,page=1]{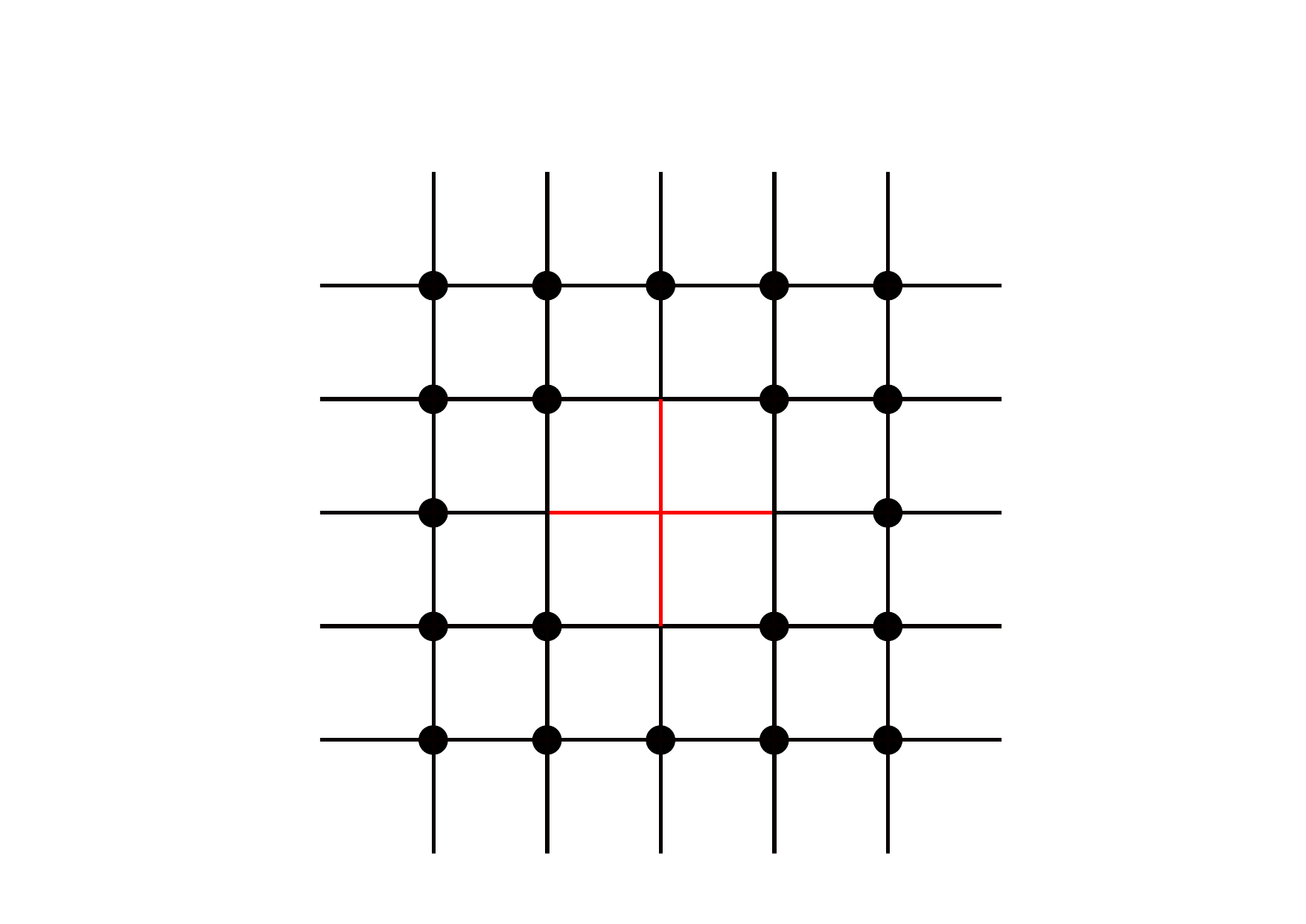}}%
    \put(0.0676858,-0.02472474){\color[rgb]{0,0,0}\makebox(0,0)[lt]{\lineheight{1.25}\smash{\begin{tabular}[t]{l}\large \color{red} $y_{v^*}(1) \sim Q_1$\end{tabular}}}}%
    \put(0.53906625,-0.0247251){\color[rgb]{0,0,0}\makebox(0,0)[lt]{\lineheight{1.25}\smash{\begin{tabular}[t]{l}\large $y_v(1) \sim Q_0$\end{tabular}}}}%
    \put(0.50799547,0.25479839){\color[rgb]{0,0,0}\makebox(0,0)[lt]{\lineheight{1.25}\smash{\begin{tabular}[t]{l}$v^*$\end{tabular}}}}%
    \put(0.60289286,0.41844012){\color[rgb]{0,0,0}\makebox(0,0)[lt]{\lineheight{1.25}\smash{\begin{tabular}[t]{l}$v$\end{tabular}}}}%
    \put(0,0){\includegraphics[width=\unitlength,page=2]{propagation_observation_lattices_t1.pdf}}%
  \end{picture}%
\endgroup%

\caption{$t=1$}
\end{subfigure}
\begin{subfigure}{0.3\textwidth}

	\def \svgwidth{1.0\columnwidth}
\begingroup%
  \makeatletter%
  \providecommand\color[2][]{%
    \errmessage{(Inkscape) Color is used for the text in Inkscape, but the package 'color.sty' is not loaded}%
    \renewcommand\color[2][]{}%
  }%
  \providecommand\transparent[1]{%
    \errmessage{(Inkscape) Transparency is used (non-zero) for the text in Inkscape, but the package 'transparent.sty' is not loaded}%
    \renewcommand\transparent[1]{}%
  }%
  \providecommand\rotatebox[2]{#2}%
  \newcommand*\fsize{\dimexpr\f@size pt\relax}%
  \newcommand*\lineheight[1]{\fontsize{\fsize}{#1\fsize}\selectfont}%
  \ifx\svgwidth\undefined%
    \setlength{\unitlength}{841.88976378bp}%
    \ifx\svgscale\undefined%
      \relax%
    \else%
      \setlength{\unitlength}{\unitlength * \real{\svgscale}}%
    \fi%
  \else%
    \setlength{\unitlength}{\svgwidth}%
  \fi%
  \global\let\svgwidth\undefined%
  \global\let\svgscale\undefined%
  \makeatother%
  \begin{picture}(1,0.70707071)%
    \lineheight{1}%
    \setlength\tabcolsep{0pt}%
    \put(0,0){\includegraphics[width=\unitlength,page=1]{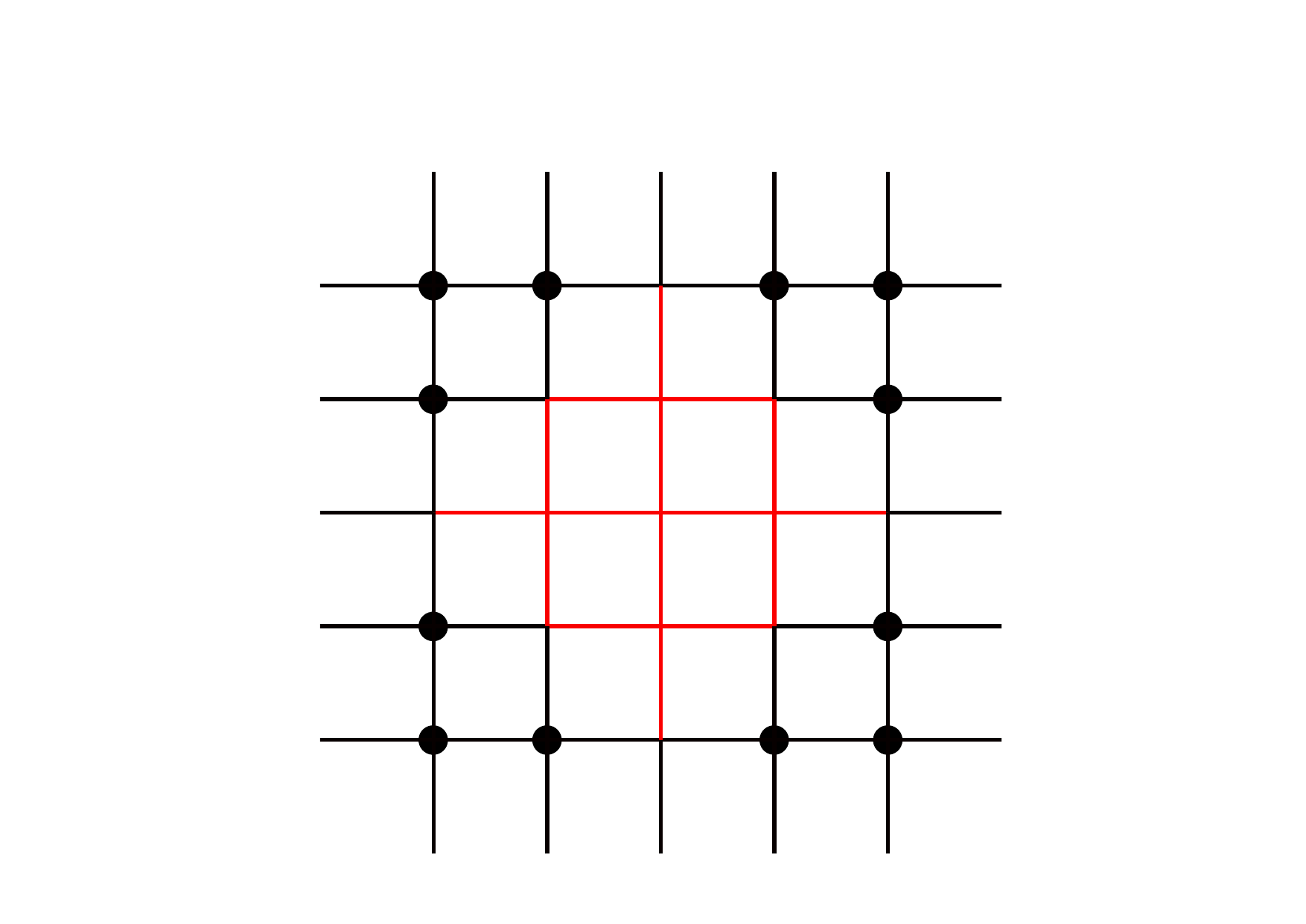}}%
    \put(0.0676858,-0.02472474){\color[rgb]{0,0,0}\makebox(0,0)[lt]{\lineheight{1.25}\smash{\begin{tabular}[t]{l}\large \color{red} $y_{v^*}(2) \sim Q_1$\end{tabular}}}}%
    \put(0.53906625,-0.0247251){\color[rgb]{0,0,0}\makebox(0,0)[lt]{\lineheight{1.25}\smash{\begin{tabular}[t]{l}\color{red}\large $y_v(2) \sim Q_1$\end{tabular}}}}%
    \put(0.50799547,0.25479839){\color[rgb]{0,0,0}\makebox(0,0)[lt]{\lineheight{1.25}\smash{\begin{tabular}[t]{l}$v^*$\end{tabular}}}}%
    \put(0.60289286,0.41844012){\color[rgb]{0,0,0}\makebox(0,0)[lt]{\lineheight{1.25}\smash{\begin{tabular}[t]{l}$v$\end{tabular}}}}%
    \put(0,0){\includegraphics[width=\unitlength,page=2]{propagation_observation_lattices_t2.pdf}}%
  \end{picture}%
\endgroup%

\caption{$t=2$}
\end{subfigure}
\caption{Illustration of cascade propagation and observations in 3-regular trees (a-c) and 2-dimensional lattices (d-f). In each image, the red nodes are those affected by the cascade, and the black are unaffected, though this information is hidden from the observer. Notice that the public signals for the cascade source $v^*$ are always sampled from $Q_1$, whereas the public signals for another vertex $v$ are initially sampled from $Q_0$ and change to samples from $Q_1$ once the cascade spreads sufficiently far.}
\label{fig:propagation_observation}
\end{figure}

See Figure \ref{fig:propagation_observation} for an illustration of the cascade dynamics and the evolution of observed signals on regular trees and lattices. We choose to study regular trees and lattices for several reasons. For one, they have strong symmetry properties (e.g., the local structure around all vertices are the same) which makes it easier to explicitly determine the performance of optimal algorithms. Second, we present a unified treatment of source estimation on regular trees and lattices (except for minor differences), even though the two families of graphs have extremely different topologies; perhaps the most obvious difference is that trees are acyclic while lattices contain many cycles of varying lengths. This indicates that it may be possible to generalize our methods to other topologies as well (see Section \ref{subsec:opt_general_graphs} for further discussion on this point). We also remark that it is a common assumption in the theoretical analysis of cascade models and inference tasks that the underlying graph has infinitely many vertices \cite{newman, sridhar_poor_bayes, sridhar_poor_sequential, huang2020contact, bhamidi2021survival, shah2011rumors, shah2012finding, KhimLoh15, fanti2015spy, racz_source_detection}. Moreover, the infinite graph setting allows us to capture scenarios where the size of the cascade is small compared to the total population without unnecessarily complicating our mathematical analysis.

There are several network and cascade models that are arguably more realistic than the ones we study in this paper; see for instance \cite{cascades_review, configuration_model, random_geometric_graphs, BA99, Mah92,BA99,BRST01}. However, even for the simple networks and cascade dynamics we consider, we expect that an exact characterization of optimal source estimators is mathematically intractable. The reason for this is that we may interpret the problem of source estimation as a {\it sequential multi-hypothesis testing problem}, where different hypotheses correspond to different potential sources. In the two-hypothesis case, the optimal hypothesis test is known to be the {\it sequential probability ratio test} (SPRT), which is a relatively simple procedure that tracks the cumulative log-likelihood ratio over time and stops when it achieves a particular threshold \cite{wald1948}. When there are more than two hypotheses, the optimal test has a complicated form and is difficult to analyze \cite{baum_veeravalli_optimal}. To carry out a tractable analysis, we therefore characterize optimal source estimators in {\it asymptotic regimes}, where the number of possible source vertices -- in other words, the size of the candidate set -- tends to infinity. Formally, we consider a {sequence} of candidate sets and study asymptotic properties of optimal estimators when the size of the candidate set grows large. \revision{As we shall see, the analysis of optimal estimators depends not only on the \emph{size} of the candidate set, but also its topology. For instance, if two vertices in the candidate set are adjacent, there is a lot of overlap in the set of potential infections caused by each vertex. Hence it takes more effort and information to decide between the two vertices. On the other extreme, if two vertices in the candidate set are very far apart, it takes comparatively less effort to distinguish between them. One can therefore imagine that a \emph{worst-case} candidate set is one where all vertices are as close to each other as possible. This idea is formally captured in the following assumption.}

\begin{assumption}
\label{as:candidate_set}
We assume the sequence of candidate sets $\{V_n \}_n$ satisfies the following: 
\begin{enumerate}
\item For all positive integers $n$, $V_n \subset V$ and $|V_n | = n$;
\item There is a designated vertex $v_0 \in V$ and a sequence of integers $\{r_n \}_n$ such that for all positive integers $n$, 
$$
\cN_{v_0}(r_n) \subseteq V_n \subset \cN_{v_0}(r_n + 1).
$$
\end{enumerate}
\end{assumption}

\revision{Above, we recall that $\cN_{v_0}(r)$ is the $r$-hop neighborhood of $v_0$. The second condition in Assumption \ref{as:candidate_set}, which states that $V_n$ is approximately a neighborhood of some arbitrary vertex, correctly captures the notion of a worst-case candidate set in the sense that it maximizes the number of vertex pairs that are close to each other. On a more technical note, by assuming a \emph{specific} topological structure for the candidate set, we have enough detail to carry out a precise mathematical analysis of optimal estimators.}

\revision{The value of $r_n$ used in Assumption \ref{as:candidate_set} can be made explicit.} For the graphs of interest to us, we can employ straightforward combinatorial arguments to show
\begin{equation}
\label{eq:rn_asymptotics}
r_n \sim \begin{cases}
\frac{\log n}{\log (k-1)} & \text{ $G$ is a $k$-regular tree, $k \ge 3$;} \\
\left( \frac{\ell!}{2^\ell} n \right)^{1/\ell} & \text{ $G$ is a $\ell$-dimensional lattice, $\ell \ge 1$.}
\end{cases}
\end{equation}
For details, see Corollaries \ref{cor:trees_rn} and \ref{cor:lattice_rn} in Appendix \ref{sec:size_of_neighborhoods}. 

\revision{As a final remark on the candidate set, we emphasize that Assumption \ref{as:candidate_set} is made only for the purposes of studying the performance of optimal estimators; it need not be satisfied to apply our estimators to more realistic, finite networks. See the part of Section \ref{sec:simulations} concerning cascade source estimation on Erd\H{o}s-R\'{e}nyi random graphs for more details on this point. }

\subsection{Results on Bayesian estimation}
\label{subsec:bayes_results}

For a stopping time $T$, sequence of estimators $\widehat{\mathbf{v}}$ and a candidate set $V_n$, define the quantities
\begin{align*}
\val_B(T, \widehat{\mathbf{v}}) & :  =\E_{\pi(V_n)} [ d(v^*, \widehat{v}(T)) + T ] \\
 \val_B^*(V_n) & : = \inf\limits_{T, \widehat{\mathbf{v}}} \val_B(T, \widehat{\mathbf{v}} ).
\end{align*}
Note in particular that $\val_B^*(V_n)$ is the optimal value of \eqref{eq:bayes_opt} when the candidate set is $V_n$. Our main result on the Bayesian formulation is the following theorem.

\begin{theorem}
\label{thm:bayes}
When $G$ is a $k$-regular tree with $k \ge 3$,
\begin{equation}
\label{eq:T_bayes_tree}
\val_B^*(V_n) \sim \frac{\log \log n}{\log (k - 1)}. 
\end{equation}
On the other hand, when $G$ is a $\ell$-dimensional lattice, there exist constants $a,b$ depending only on $\ell, Q_0, Q_1$ such that for $n$ sufficiently large, 
\begin{equation}
\label{eq:T_bayes_lattice}
a (\log n)^{\frac{1}{\ell + 1}} \le \val_B^*(V_n) \le b (\log n)^{\frac{1}{\ell + 1}}.
\end{equation}
\end{theorem}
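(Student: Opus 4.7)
The plan is to reduce both bounds to the behavior of the Bayes-optimal estimator's expected error as a function of the sample size. Under the uniform prior $\pi(V_n)$, for any fixed stopping time $T$ the optimal source estimator is the MAP estimator $\widehat v_{\mathrm{MAP}}(T)$, so $\val_B^*(V_n) = \inf_T \E_{\pi(V_n)}[T + d(v^*, \widehat v_{\mathrm{MAP}}(T))]$. Setting $e(t) := \E_{\pi(V_n)}[d(v^*, \widehat v_{\mathrm{MAP}}(t))]$, the strategy is to show that $e(t)$ exhibits a sharp transition at the critical scale $t^* = \log\log n/\log(k-1)$ (trees) or $t^* \asymp (\log n)^{1/(\ell+1)}$ (lattices): it is large for $t \ll t^*$ and bounded for $t \gg t^*$. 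Matching bounds on $\val_B^*(V_n)$ then follow by choosing $T$ near $t^*$.

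\textbf{Upper bound.} I would fix the deterministic stopping rule $T_n = \lceil \log\log n/\log(k-1)\rceil + C$ (tree case) or $T_n = C(\log n)^{1/(\ell+1)}$ (lattice case). Since $\val_B^*(V_n) \le T_n + e(T_n)$, it suffices to show $e(T_n) = O(1)$. Fix a true source $v^*$ and, for a competing candidate $w$, expand $\log(\p_w/\p_{v^*})$ from observations through time $T_n$ into a sum of independent contributions indexed by pairs $(u,s)$ where the cascade status of $u$ at time $s$ differs under the two hypotheses. Under $\p_{v^*}$, the expectation of this log-likelihood ratio equals $-c$ times a combinatorial count $N(v^*,w,T_n)$ of such pairs, for a constant $c > 0$ depending only on $Q_0, Q_1$. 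For the chosen $T_n$, one checks that $N(v^*,w,T_n)$ is uniformly $\omega(\log n)$ once $d(v^*, w) \ge C$, using the exponential growth of ball sizes on $\cT_k$ and the analogous polynomial estimate on lattices. A Chernoff bound followed by a union bound over candidates at each distance $r \ge C$ (of which there are at most $k(k-1)^{r-1}$ on trees and $O(r^{\ell-1})$ on lattices) then shows that $\widehat v_{\mathrm{MAP}}(T_n)$ lies within distance $C$ of $v^*$ with probability $1-o(1)$, giving $e(T_n) = O(1)$.

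\textbf{Lower bound.} To prove $\val_B^*(V_n) \ge (1 - o(1)) t^*$, I would apply a generalized Fano inequality: for a uniform source over a well-chosen sub-family $S \subseteq V_n$, the probability that any estimator correctly identifies the source is at most $(I_S(T) + \log 2)/\log |S|$, where $I_S(T)$ is the mutual information between the source and the observations through time $T$. By convexity, $I_S(T) \le |S|^{-2}\sum_{u,v \in S} D(\p_u^{(T)} \| \p_v^{(T)})$, and each pairwise KL is bounded by a constant times the same combinatorial count $N(u,v,T)$ used above. This yields $I_S(T) \lesssim (k-1)^T$ for trees and a corresponding polynomial bound in $T$ for lattices. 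Taking $|S| \asymp n$ and combining with a lower bound on typical intra-$S$ distances, one concludes that $e(T) \to \infty$ unless $T \gtrsim t^*$, so $\val_B \ge \E[T] + e(T) \ge (1-o(1)) t^*$.

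\textbf{Main obstacles.} The main technical challenges are (i) the uniform estimation of pairwise KL divergences in terms of the counts $N(u,v,T)$, which requires rather different analyses on trees (exponential growth of the difference set) and lattices (polynomial boundary growth, where both the ball and the differential thickness contribute, producing the extra factor responsible for the $T^{\ell+1}$ scaling), and (ii) the choice of the Fano sub-family $S$. On trees, taking $S$ to consist of descendants of a common ancestor at depth $r_n$ leverages the full symmetry of $\cT_k$ and yields the sharp leading constant $1/\log(k-1)$ in \eqref{eq:T_bayes_tree}. On lattices, a dyadic sub-lattice is a natural choice but the geometric position of pairs within $S$ perturbs $N(u,v,T)$ by a bounded multiplicative factor; this is the source of the gap between the constants $a$ and $b$ in \eqref{eq:T_bayes_lattice}.
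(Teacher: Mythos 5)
Your upper-bound argument---fix a deterministic time $T_n$ near the critical threshold and bound $e(T_n)=O(1)$ by a Chernoff bound on the posterior odds together with a union bound stratified by distance from the true source---is essentially the content of the paper's Lemma~\ref{lemma:estimation_error_upper_bound}, so that half is sound (the paper uses the adaptive threshold rule $T_{th}$ and the distance-minimizing estimator $\widehat{v}_B$ rather than MAP and a fixed horizon, but either gives the right first-order term). The lower bound, however, has a genuine gap. You conclude $\val_B^*(V_n)\gtrsim t^*$ by writing ``$\val_B \ge \E[T] + e(T) \ge (1-o(1))t^*$'' from a Fano-type inequality, but Fano controls the Bayes risk $e(t)=\E_{\pi(V_n)}[d(v^*,\widehat{v}(t))]$ only \emph{in expectation} and only at a \emph{fixed} time $t$, whereas $T$ is a data-dependent stopping time. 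A sequential procedure can deliberately stop exactly when the posterior happens to be well-concentrated; an in-expectation bound at each fixed $t$ does not preclude the posterior risk being, say, $0$ with probability $p_t$ at each early $t$ while still meeting the Fano expectation bound, in which case $T_{th}$ stops after $O(1/p_t)$ steps with a perfect estimate and the would-be lower bound fails. Closing this requires a \emph{high-probability} statement holding uniformly over early $t$. That is precisely what the paper's Lemma~\ref{lemma:estimation_error_lower_bound} supplies: using Chebyshev's inequality on $\sum_w d(u,w)X_w(t)$ together with a covering argument, it shows
$$
\max_{v\in V_n}\p_v\left(\min_{0\le t\le F(a_1\log n)}\E_{\pi(t)}[d(v^*,\widehat{v}_B(t))]\le b_1\log n\right)\longrightarrow 0,
$$
and on the complementary event the tower rule $\E_v[d(v^*,\widehat{v}_B(T))]=\E_v[\E_{\pi(T)}[d(v^*,\widehat{v}_B(T))]]$ lets one bound $\E[T+d]$ for an \emph{arbitrary} stopping time by a case split on whether $T\le F(a_1\log n)$. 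You would need to upgrade your Fano estimate to an analogous uniform high-probability bound on the posterior risk before it can be combined with a random $T$.

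A secondary point: the MAP estimator is not Bayes-optimal for the distance loss; the optimal rule is the posterior medoid $\widehat{v}_B(t)=\argmin_{v\in V_n}\E_{\pi(t)}[d(v^*,v)]$ (paper's Lemma~\ref{lemma:optimal_estimator}), so your identity $\val_B^*(V_n)=\inf_T\E_{\pi(V_n)}[T+d(v^*,\widehat{v}_{\mathrm{MAP}}(T))]$ is only an inequality. This is harmless for your upper bound and your Fano argument applies to every estimator, so it does not create an additional gap, but the statement as written is false.
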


In words, \eqref{eq:T_bayes_tree} pins down the exact first-order asymptotic behavior of $\val_B^*(V_n)$ when $n$ is large in the case of regular trees. For lattices, \eqref{eq:T_bayes_lattice} captures the {\it orderwise} behavior of $\val_B^*(V_n)$ when $n$ is large. While Theorem \ref{thm:bayes} focuses on how $\val_B^*(V_n)$ scales with $n$, we remark that the (appropriately defined) distance between $Q_0$ and $Q_1$ plays a role in the performance of optimal estimators. In the case of $k$-regular trees, it appears in the {\it second-order} expansion of $\val_B^*(V_n)$. For $\ell$-dimensional lattices, the distance between $Q_0$ and $Q_1$ influences the constants $a$ and $b$, with both blowing up to infinity as the distance between $Q_0$ and $Q_1$ becomes small. \\

\noindent {\it Proof summary.} For any vertex $v \in V$ and integer $s \ge 0$, recall that $\cN_v(s)$ is the set of vertices within distance $s$ of $v$. From the cascade dynamics defined in Assumption \ref{as:candidate_set}, $\cN_{v^*}(s)$ is precisely the set of vertices which have public signals distributed according to $Q_1$, rather than $Q_0$. The number of {\it total} public signals distributed according to $Q_1$ in the first $t$ timesteps is therefore given by 
\begin{equation}
\label{eq:neighborhood_growth_fn}
f(t) : = \sum\limits_{s = 0}^t | \cN_{v^*}(s) |.
\end{equation}
Due to the symmetry of regular trees and lattices, $|\cN_u(s)| = |\cN_v(s)|$ for any $u,v \in V$ and $s \ge 0$. Hence $f(t)$ does not depend on $v^*$, which is why we do not include $v^*$ in the notation. The interpretation of $f(t)$ as the number of public signals distributed according to $Q_1$ implies that, in an abstract sense, $f(t)$ is a measure of the amount of information an observer has about the spread of the cascade.\footnote{We later make this more formal by showing that the Kullback-Liebler (KL) divergence between the measures $\p_u$ and $\p_v$ pertaining to the variables $y(0), \ldots, y(t)$ is proportional to $f(t)$ for most pairs $u,v \in V$.} On the other hand, the initial uncertainty around the location of $v^*$ can be measured by the entropy of $\pi(V_n)$, which is $\log n$. One may then expect that when $f(t) \lesssim \log n$, the information about the cascade propagation is not enough to overcome the uncertainty around the source location. It turns out that this intuition does indeed hold: for any $(T, \widehat{\mathbf{v}})$, $\E_{\pi(V_n)} [ d(v^*, \widehat{v}(T)) ]$ is large (order $\log n$ in regular trees and $n^{1/\ell}$ in $\ell$-dimensional lattices) when $f(T) \lesssim \log n$. It follows that accurate source estimation is only possible when $f(T) \gtrsim \log n$ or equivalently, when $T \gtrsim f^{-1}(\log n)$. This leads to the lower bound $\val_B^*(V_n) \gtrsim f^{-1}(\log n)$, which is $\log \log n / \log (k - 1)$ in $k$-regular trees up to first order terms and is of order $(\log n)^{\frac{1}{\ell + 1}}$ in $\ell$-dimensional lattices. 

An upper bound on $\val_B^*(V_n)$ is then derived by characterizing the performance of a given source estimator. Consider $(T_{th}, \widehat{\mathbf{v}}_B)$, given formally by
\begin{align}
\label{eq:v_B}
\widehat{v}_B(t)  & \in  \argmin\limits_{v \in V_n} \E_{\pi(t)} [ d(v^*, v) ] \\
\label{eq:T_th}
T_{th}  & : = \min \left \{t \ge 0 : \E_{\pi(t)} [ d(v^*, \widehat{v}_B(t)) ] \le 1 \right \}.
\end{align}
Above, the measure $\pi(t)$ is the {\it posterior distribution} of the source $v^*$ after observing the sequence of public signals $\mathbf{y}(0), \ldots, \mathbf{y}(t)$, \revision{hence $\E_{\pi(t)} [ d(v^*, v) ]$ can be viewed as a conditional expectation.} The interpretations of $T_{th}$ and $\widehat{\mathbf{v}}_B$ are quite intuitive. In words, $\widehat{v}_B(t)$ is a vertex which achieves the minimum estimation error, conditioned on the observed information until time $t$. The estimator $\widehat{\mathbf{v}}_B$ can therefore be thought of as the {\it Bayes-optimal source estimator}, as it minimizes the conditional estimation error.\footnote{We formalize this idea in Lemma \ref{lemma:optimal_estimator}, where we show that if the stopping time $T$ is fixed, $\widehat{\mathbf{v}}_B$ achieves $\inf_{\widehat{\mathbf{v}}} \val_B(T, \widehat{\mathbf{v}})$.} The stopping time $T_{th}$ will keep sampling until the conditional estimation error of the optimal estimator falls below the threshold 1 (the subscript $th$ references the fact that we stop once the estimation error is below a {\it threshold}). In characterizing the performance of the estimator $(T_{th}, \widehat{\mathbf{v}}_B)$, we show that $\val_B(T_{th}, \widehat{\mathbf{v}}_B) \sim \log \log n / \log (k - 1)$ in $k$-regular trees and $\val_B(T_{th}, \widehat{\mathbf{v}}_B) \asymp (\log n)^{\frac{1}{\ell + 1}}$ in $\ell$-dimensional lattices. Remarkably, these match the the lower bounds previously established for $\val_B^*(V_n)$, leading to \eqref{eq:T_bayes_tree} and \eqref{eq:T_bayes_lattice}. Moreover, our analysis shows that the estimator $(T_{th}, \widehat{\mathbf{v}}_B)$ enjoys near-optimal performance when $n$ is large. 

\revision{As a final remark, we note that our proof methods are quite general and can also handle the case where $T$ is replaced with $h(T)$ in \eqref{eq:bayes_opt} for any $h$ that increases slower than an exponential function. Moreover, the \emph{same} estimator $(\widehat{\mathbf{v}}_B, T_{th})$ also enjoys near-optimal performance in this case. For details, see Remark \ref{remark:general_temporal_cost}. }

\subsection{Results on minimax estimation}
\label{subsec:minimax_results}

We begin by defining some notation. Let $\alpha > 0$ be fixed, and suppose $V_n$ is the candidate set. Define the class of estimators 
$$
\Delta(V_n, \alpha) : = \left \{ (T, \widehat{\mathbf{v}}) : \max\limits_{v \in V_n} \E_v [ d(v, \widehat{v}(T) ) ] \le \alpha \right \}.
$$
In words, $\Delta(V_n, \alpha)$ is the class of source estimators which have a worst-case estimation error of at most $\alpha$. In particular, $\Delta(V_n, \alpha)$ is the set of feasible estimators in the minimax formulation \eqref{eq:np}. The optimal value of the minimax formulation is denoted by 
$$
\val_M^*(V_n, \alpha) := \inf\limits_{(T, \widehat{\mathbf{v}}) \in \Delta(V_n, \alpha)} \max\limits_{v \in V_n} \E_v [ T ].
$$
The results we obtain for $\val_M^*(V_n, \alpha)$ are essentially the same as in the Bayesian formulation. Specifically, we prove the following theorem. 

\begin{theorem}
\label{thm:minimax}
Let $\alpha$ be fixed. When $G$ is a $k$-regular tree, 
\begin{equation}
\label{eq:T_minimax_tree}
\val_M^*(V_n, \alpha) \sim \frac{\log \log n}{\log (k - 1)}.
\end{equation}
On the other hand, when $G$ is a $\ell$-dimensional lattice, there exist constants $a', b'$ depending only on $\ell, Q_0, Q_1$ such that for $n$ sufficiently large, 
\begin{equation}
\label{eq:T_minimax_lattice}
a' (\log n)^{\frac{1}{\ell + 1}} \le \val_M^*(V_n, \alpha) \le b' (\log n)^{\frac{1}{\ell + 1}}.
\end{equation}
\end{theorem}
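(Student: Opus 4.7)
The plan is to prove matching lower and upper bounds on $\val_M^*(V_n, \alpha)$, both of the form $f^{-1}(\log n)$, where $f$ is the neighborhood growth function from \eqref{eq:neighborhood_growth_fn}.

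For the lower bound, I would use a minimax-to-Bayes reduction. Since the uniform prior $\pi(V_n)$ is a feasible prior, any $(T, \widehat{\mathbf{v}}) \in \Delta(V_n, \alpha)$ satisfies
$$
\E_{\pi(V_n)} [ d(v^*, \widehat{v}(T)) ] \le \max_{v \in V_n} \E_v [ d(v, \widehat{v}(T) ) ] \le \alpha
$$
and $\max_{v \in V_n} \E_v[T] \ge \E_{\pi(V_n)}[T]$. From the proof of Theorem \ref{thm:bayes}, specifically Lemma \ref{lemma:estimation_error_lower_bound}, any procedure whose expected error under $\pi(V_n)$ is bounded by a constant must have $\E_{\pi(V_n)}[T] \gtrsim f^{-1}(\log n)$; otherwise $f(T) \ll \log n$ with non-negligible probability, which forces the error to be of order $\log n$ in trees or $n^{1/\ell}$ in lattices. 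Combining these facts and invoking the asymptotics \eqref{eq:rn_asymptotics} for $r_n$ yields the desired lower bounds in \eqref{eq:T_minimax_tree} and \eqref{eq:T_minimax_lattice}.

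For the upper bound, the Bayes threshold estimator $(T_{th}, \widehat{\mathbf{v}}_B)$ is not guaranteed to be feasible, since it controls only the Bayes-averaged error rather than the worst-case error. Instead, I would design a novel MSPRT based on pairwise log-likelihood ratios
$$
L_{uw}(t) := \log \frac{d\p_u}{d\p_w} \bigl( y(0), \ldots, y(t) \bigr).
$$
The test would output $\widehat{v}(T) = u$ at the first time $T$ such that $L_{uw}(T) \ge \gamma_{uw}$ for every $w \in V_n$ with $d(u,w) > \alpha$, where the thresholds $\{\gamma_{uw}\}$ are tuned according to the graph distance $d(u,w)$. The exclusion of pairs with $d(u,w) \le \alpha$ directly encodes the tolerance $\alpha$ in the feasibility constraint. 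Feasibility would then be verified by bounding, via a Wald-type change-of-measure argument and a union bound, the probability that the test halts at some $u$ with $d(v,u) > \alpha$ under each $\p_v$. The runtime analysis would show that under $\p_v$, the required log-likelihood ratios $L_{vw}$ all cross their thresholds within $O(f^{-1}(\log n))$ timesteps with overwhelming probability.

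The main obstacle lies in the runtime analysis, which demands sharp two-sided control of $L_{uw}(t)$ simultaneously over all relevant candidate pairs. The Kullback--Leibler divergence between $\p_u$ and $\p_w$ is essentially proportional to the number of timesteps and vertices at which the two cascades disagree, which varies substantially in $d(u,w)$ and in $t$. Consequently the threshold profile $\gamma_{uw}$ must be calibrated against scale to simultaneously (i) avoid long holdups in distinguishing distant candidate pairs, which drive the worst-case runtime, and (ii) prevent spurious early decisions among pairs at moderate distance, which would violate feasibility. This scale-dependent calibration is precisely what makes the MSPRT a multi-scale search procedure, and pushing the analysis through with the constants required for the first-order $\sim$ statement in \eqref{eq:T_minimax_tree}---as opposed to only the orderwise bound in \eqref{eq:T_minimax_lattice}---will require a delicate union-bound argument that leverages the symmetry of $G$ and the neighborhood enumeration estimates from Appendix \ref{sec:size_of_neighborhoods}.
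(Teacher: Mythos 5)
Your lower bound is the paper's argument: reduce the minimax problem to the Bayesian one via the uniform prior, then invoke Lemma~\ref{lemma:estimation_error_lower_bound} to show that any feasible procedure must run for at least order $F(\log n)$ steps. This part is sound.

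Your upper bound is where the gap lies. You propose to \emph{exclude} pairwise comparisons $L_{uw}$ for $d(u,w)\le\alpha$ and threshold only the remaining, ``far'' pairs. This design centers the multi-scale structure on the fixed tolerance $\alpha$, and it does not give the right order in lattices. The bottleneck is pairs $w$ with $d(u,w)$ just above $\alpha$ but much smaller than $t$: for such $w$ the cumulative log-likelihood $Z_{uw}(t)$ grows like $f_{uw}(t)\asymp d(u,w)\, t^\ell$, not $t^{\ell+1}$. Since $\alpha$ is a constant, forcing $Z_{uw}(t)\gtrsim\log n$ for these pairs requires $t\gtrsim(\log n)^{1/\ell}$, which is the same wrong order one gets from the naive uniform-threshold MSPRT. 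The paper's fix is not to exclude any pair but to use two-level thresholds with a cutoff $K$ that \emph{grows with $n$}, namely $K=(\log n)^{1/\ell}$ (Definition~\ref{def:K_level}): pairs at distance $\le K$ face a small threshold $\asymp\log\log n$ (so their slow, $t^\ell$-rate likelihoods still cross quickly), while pairs at distance $>K$ face the larger $\asymp\log n$ threshold but enjoy $f_{vu}(t)=f(t)\asymp t^{\ell+1}$ in the relevant range $t\le K/2$, which crosses in $(\log n)^{1/(\ell+1)}$ steps. Feasibility then follows from the Tartakovsky-type bound $\p_v(\widehat{v}(T)=u)\le 1/\tau(v,u)$ applied to \emph{all} pairs; excluding near pairs would remove exactly the inequalities needed to control the near-miss error cleanly. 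A further simplification you miss: for $k$-regular trees no multi-scale design is needed at all—$f_{vu}(t)$ grows exponentially for every $u\neq v$, so the paper achieves the exact first-order constant in \eqref{eq:T_minimax_tree} with uniform thresholds $\tau(v,u)=n^2/\alpha$ (Theorem~\ref{thm:trees}); your scale-dependent calibration and $\alpha$-exclusion are unnecessary complications there.
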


As in Theorem \ref{thm:bayes}, \eqref{eq:T_minimax_tree} provides an exact first-order characterization of $\val_M^*(V_n, \alpha)$ when $n$ is large, and \eqref{eq:T_minimax_lattice} describes the {\it orderwise} behavior of $\val_M^*(V_n, \alpha)$. We remark that the constants $a'$ and $b'$ used in \eqref{eq:T_minimax_lattice} are potentially distinct from the constants $a,b$ used in the Bayesian analogue \eqref{eq:T_bayes_lattice}. However, we make no attempt to optimize the constants, instead focusing on the orderwise behavior as $n$ grows large. As in Theorem \ref{thm:bayes}, the (appropriately defined) distance between $Q_0$ and $Q_1$ plays a role in the second order terms of $\val_M^*(V_n, \alpha)$ in regular trees. In lattices, the constants $a'$ and $b'$ blow up to infinity when the distance between $Q_0$ and $Q_1$ is small.\\ 

\begin{figure*}[t]
\centering
\begin{subfigure}{0.4 \textwidth}
\centering

	\def \svgwidth{1.0\columnwidth}
	\import{figures/}{msprt1.pdf_tex}

\end{subfigure}%
\begin{subfigure}{0.6 \textwidth}
\centering
\includegraphics[width=0.7\textwidth]{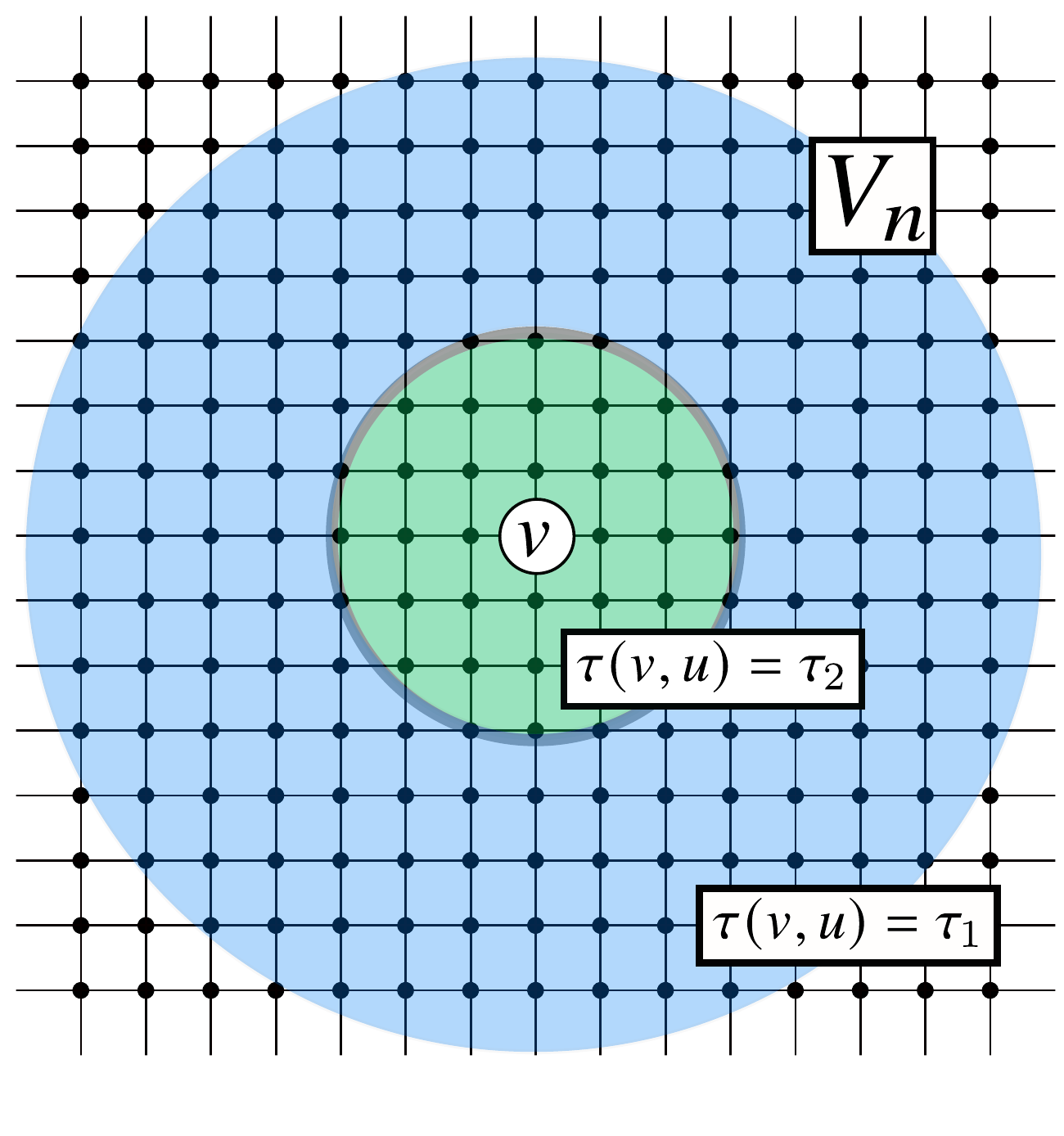}
\end{subfigure}
\caption{Schematic representation of the MSPRT we design to achieve near-optimal source estimation. {\it Left:} Plots of the likelihood ratios with respect to vertex $v$, where we have enumerated $V_n \setminus \{v\}$ as $u_1, \ldots, u_{n-1}$. We halt when $d\p_v / d\p_{u_i}$ crosses the threshold $\tau(v, u_i)$ (shown in orange) for all $1 \le i \le n-1$. This stopping time, labeled $T(v)$, is shown in red. The source estimator is the vertex $v$ which achieves the smallest value of $T(v)$. {\it Right:} Design of the thresholds $\tau(v, u)$, visualized here for the 2-dimensional lattice. For vertices $u,v \in V_n$ that are far (blue region), we set $\tau(v,u) = \tau_1$ and for vertices $u,v \in V_n$ that are close (green region), we set $\tau(v,u) = \tau_2$. 
}
\label{fig:msprt}
\end{figure*}

\noindent {\it Proof summary.} As in the Bayesian case, we focus on establishing lower bounds for $\val_M^*(V_n, \alpha)$ and derive matching upper bounds by studying the performance of a carefully designed estimator which lies within the feasible set $\Delta(V_n, \alpha)$. 

To derive lower bounds for $\val_M^*(V_n, \alpha)$, we observe that the Bayesian objective value -- which measures {\it average-case} behaviors of source estimators -- is less strict than the minimax objective, which measures {\it worst-case} behaviors of source estimators. Hence the lower bound for $\val_B^*(V_n)$ also holds for $\val_M^*(V_n, \alpha)$, provided $\alpha$ is constant with respect to $n$. 

Next, we establish an upper bound for $\val_M^*(V_n, \alpha)$ by characterizing the performance of a {\it specific} stopping time and estimator. Unfortunately, we cannot use $(T_{th}, \widehat{\mathbf{v}}_B)$ (used in the Bayesian setting) for this task since it is unclear whether it is an element of the class $\Delta(V_n, \alpha)$. We therefore take a different approach in designing an appropriate estimator within $\Delta(V_n, \alpha)$ which matches the lower bounds for $\val_M^*(V_n, \alpha)$. 

To this end, recall that the problem of source estimation can be viewed as a sequential multi-hypothesis testing problem, where each hypothesis corresponds to the possibility of a particular vertex being the source. Motivated by the optimality of SPRTs for the two-hypothesis setting, we consider a natural extension to the multiple hypothesis setting known as the {\it multi-hypothesis sequential probability ratio test} (MSPRT), described below: \begin{itemize}

\item For each pair of distinct $u,v \in V_n$, specify a threshold $\tau(v,u)$, which is a positive real number. 

\item Let $T(v)$ be the stopping time that halts as soon as 
$$
\frac{d \p_v }{d \p_u} (y(0), \ldots, y(t)) \ge \tau(v,u) , \qquad \forall u \in V_n \setminus \{v \}.
$$
\revision{Here, we recall that $\p_v$ is the probability measure conditioned on $v^* = v$.}

\item The final source estimator is $\argmin_{v \in V_n} T(v)$; that is, the vertex whose stopping time halts first. 

\end{itemize}

For general multi-hypothesis testing problems, it is known that MSPRTs enjoy near-optimal performance when the number of hypotheses is fixed and the bound on the estimation error, $\alpha$, is small \cite{baum_veeravalli_optimal, Tartakovsky1998, Dragalin_1987, lorden, msprt_opt_pt1, msprt_opt_pt2}. Although our setting is different, since $\alpha$ is fixed and the number of hypotheses are large, it is natural to expect that MSPRTs still have good performance. Indeed, we provide a novel way to design MSPRTs with worst-case expected runtime that match the lower bounds for $\val_M^*(V_n, \alpha)$: for $u,v \in V_n$ that are ``far", we set $\tau(v,u) =  \tau_1$ and for $u,v \in V_n$ that are ``close" we set $\tau(v,u) = \tau_2$ where $\tau_1, \tau_2$ are pre-determined parameters depending on the graph structure and $n$. This design can be interpreted as a {\it multi-scale search strategy}: an analysis of the likelihood ratios $d\p_v / d\p_u$ for $u,v$ far apart determine the general location of the source, and an analysis of the likelihood ratios $d\p_v / d\p_u$ for $u,v$ close give us a more fine-grained estimate. We show that with the right definition of ``closeness" as well as an appropriate choice of $\tau_1, \tau_2$, an MSPRT designed in this way achieves the upper bounds for $\val_M^*(V_n, \alpha)$ described in Theorem \ref{thm:minimax}. A diagram illustrating the key ideas of the MSPRT we have described can be found in Figure \ref{fig:msprt}. 

\subsection{Conjectures on optimal estimators in general graphs}
\label{subsec:opt_general_graphs}

In this work, we primarily focus on regular trees and lattices for a few key reasons. For one, they enjoy strong symmetry properties. In particular, the local structure of all vertex neighborhoods are isomorphic, leading to conceptually simpler proofs and near-exact computations of $\val_B^*(V_n)$ as well as $\val_M^*(V_n, \alpha)$. Second, regular trees and lattices have drastically different topological structure, yet most of our proofs work equally well for both topologies, with just minor differences. This suggests that our methods can also be used to describe optimal source estimators for {\it general topologies}. Below, we discuss how Theorems \ref{thm:bayes} and \ref{thm:minimax} may change for general topologies. 

We start by defining relevant notation. Let $G$ be a graph with (countably) infinite vertices that is locally finite (i.e., all vertex degrees are finite). As discussed earlier, we study infinite graphs since it allows us to consider scenarios where the cascade is small relative to the network size without complicating our analysis. Define the {\it vertex-dependent} neighborhood growth function
$$
f_v(t) : = \sum\limits_{s = 0}^t | \cN_v(s) |.
$$
We then have the following conjecture concerning optimal source estimation in general topologies. 

\begin{conjecture}
\label{conj:opt_val}
Suppose that $G$ is a graph with countably infinite vertices that is locally finite. Let the sequence of candidate sets $\{V_n\}_{n \ge 1}$ satisfy Assumption \ref{as:candidate_set}. Then 
$$
\val_B^*(V_n) \asymp \frac{1}{n} \sum\limits_{v \in V_n} f_v^{-1}(\log n).
$$
Additionally, for any $\alpha > 0$ that is constant with respect to $n$, 
$$
\val_M^*(V_n, \alpha) \asymp \max_{v \in V_n} f_v^{-1}(\log n).
$$
\end{conjecture}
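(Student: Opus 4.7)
The plan is to decouple the information-theoretic lower bound, which carries over from the symmetric case once $f(t)$ is replaced by the vertex-dependent $f_v(t)$, from the construction of matching estimators, which requires a nontrivial adaptation. For the lower bound, I would first generalize the estimation-error lower bound (Lemma~\ref{lemma:estimation_error_lower_bound} in the symmetric case) by showing that whenever the source is $v$, the KL divergence between the observation laws $\p_v$ and $\p_u$ restricted to $y(0),\dots,y(t)$ scales like $(f_v(t) + f_u(t))\cdot \Ent(Q_0\|Q_1)$ for ``typical'' competitors $u \in V_n$ (those whose cascade cones $\cN_v(s), \cN_u(s)$ are essentially disjoint up to time $t$). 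A Fano-type argument under the uniform prior $\pi(V_n)$ then implies that, under $\p_v$, every estimator $(T,\widehat{\mathbf{v}})$ incurs $\E_v[d(v,\widehat v(T))] = \Omega(r_n)$ whenever $f_v(T) \ll \log n$. Averaging the resulting constraint over $v \in V_n$ produces the Bayesian lower bound $\val_B^*(V_n) \gtrsim \tfrac{1}{n}\sum_{v \in V_n} f_v^{-1}(\log n)$; taking the maximum produces $\val_M^*(V_n,\alpha) \gtrsim \max_{v \in V_n} f_v^{-1}(\log n)$.

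\textbf{Upper bounds.} For matching upper bounds I would reuse the two estimators from Sections~\ref{subsub:bayes_results} and~\ref{subsub:minimax_results} with vertex-dependent tuning. In the Bayesian case, I would keep the threshold-stopping rule $(T_{th},\widehat{\mathbf{v}}_B)$ unchanged and verify, separately under each $\p_v$, that the posterior $\pi(t)$ concentrates on $v$ as soon as $t$ exceeds $(1+o(1))\, f_v^{-1}(\log n)$, so that $\E_v[T_{th}] \lesssim f_v^{-1}(\log n)$; averaging over $v$ then gives the desired upper bound. In the minimax case, I would design an MSPRT in which the threshold $\tau(v,u)$ is chosen as a function of how much the cascade cones $\{\cN_v(s)\}_{s \le t}$ and $\{\cN_u(s)\}_{s \le t}$ overlap. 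This is the natural generalization of the two-level design of Figure~\ref{fig:msprt}: a coarse threshold of order $\log n$ for essentially disjoint cones and a fine threshold of constant order for heavily overlapping ones, with a smooth interpolation when neither extreme applies.

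\textbf{Main obstacle.} The hardest step is the minimax upper bound. Without vertex-transitivity, the partition of $V_n$ into ``close'' and ``far'' competitors of a candidate $v$ must be defined in a graph-dependent manner, and one must show uniformly over $v \in V_n$ that (i) every log-likelihood ratio $\log(d\p_v/d\p_u)$ crosses its threshold by time $f_v^{-1}(\log n)$ with high probability under $\p_v$, and (ii) the probability of a premature false-alarm on any hypothesis $u \ne v^*$ remains $O(\alpha)$ after a union bound over $n$ alternatives. The required concentration involves log-likelihood-ratio drifts that depend asymmetrically on $u$ and $v$ through the overlap structure of their cascade cones, and relating this asymmetric, vertex-by-vertex overlap structure back to the single scalar $f_v^{-1}(\log n)$ is the central technical difficulty. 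A secondary issue is that $f_v$ need not be nearly ``self-similar'' for irregular graphs, so establishing the precise $\asymp$ constants (rather than merely $\lesssim$ and $\gtrsim$) may require mild regularity assumptions on $G$, such as polynomial or uniformly sub-exponential neighborhood growth, which I expect will be part of the hypotheses of any theorem that eventually settles the conjecture.
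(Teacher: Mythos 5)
The statement you are addressing is a conjecture; the paper offers no proof, only the heuristic discussion immediately following it and Remark~\ref{remark:general_msprt}, which indicate that the authors expect their proofs of Theorems~\ref{thm:bayes} and~\ref{thm:minimax} to generalize. Your high-level outline matches that expectation closely: an information-theoretic lower bound against the entropy-$\log n$ prior, the threshold rule $(T_{th},\widehat{\mathbf{v}}_B)$ for the Bayesian upper bound, and a vertex-dependent multi-scale MSPRT for the minimax upper bound. Your identification of the loss of vertex-transitivity and the resulting asymmetry in the cone-overlap structure as the central difficulty, and your remark that some regularity of $f_v$ will likely need to be assumed, are both consistent with the paper's own caution (e.g., that ``the choice of $K$ will depend on the topology of interest'').

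There is, however, a concrete gap in the lower-bound step. You propose a Fano-type argument: bound the mutual information between $v^*$ and $y(0),\dots,y(t)$ and conclude that the unconditional expected error is $\Omega(r_n)$ when $f_v(t)\lesssim \log n$. This is not what the paper does in the proved cases, and the difference matters for the vertex-dependent conjecture. The paper's Lemma~\ref{lemma:estimation_error_lower_bound} is a second-moment / Chebyshev argument on the \emph{conditional} posterior error $\E_{\pi(t)}[d(v^*,\widehat v_B(t))]$: it shows this random variable stays large, \emph{uniformly} over the window $0\le t\le F(a\log n)$ and \emph{with high probability under each $\p_v$ separately}, using the moment computations of Lemmas~\ref{lemma:xu_mean}, \ref{lemma:xu_cov}, \ref{lemma:Y} and a covering/union-bound step. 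That pointwise, per-vertex, per-time control is exactly what lets the proof of Theorem~\ref{thm:bayes} handle a data-adaptive stopping time $T$ (via the event $\cE$). A straight Fano inequality gives a lower bound on the error of a \emph{fixed-time} estimator \emph{averaged over the prior}, which is neither localized to individual sources $v$ nor robust to an estimator that peeks at the data before stopping. For the vertex-dependent claim $\val_B^*(V_n)\gtrsim\frac{1}{n}\sum_v f_v^{-1}(\log n)$ this localization is essential, since $f_v^{-1}(\log n)$ can vary widely across $v\in V_n$ and a single Fano cutoff time cannot capture the weighted average. You would need either to stratify $V_n$ by growth rate and run Fano separately on each stratum, or — closer to what the paper actually does — carry over the second-moment computation (which is largely generic) and replace the covering construction of Lemma~\ref{lemma:lattice_covering} by a graph-appropriate analogue.

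A minor point on the minimax upper bound: the paper's Remark~\ref{remark:general_msprt} keeps the simple two-level threshold structure, merely replacing $|\cN(K)|$ by $|\cN_v(K)|$. Your proposal of a smoothly interpolated, overlap-dependent threshold is a more refined idea and not wrong, but the paper's analysis in Section~\ref{subsec:proof_msprt_lattices} never requires more than the coarse/fine split; the simpler design is likely easier to analyze and already plausibly optimal orderwise. Also, for a typical far competitor the drift of $Z_{vu}$ under $\p_v$ is $D_{\mathrm{KL}}(Q_1\|Q_0)f_v(t)+D_{\mathrm{KL}}(Q_0\|Q_1)f_u(t)$, not $(f_v(t)+f_u(t))D_{\mathrm{KL}}(Q_0\|Q_1)$ as you wrote; the two KL directions enter asymmetrically, which is orderwise harmless but worth getting right if you pursue the precise constants.
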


We expect that Conjecture \ref{conj:opt_val} can be proved by a straightforward generalization of our techniques. Following analogous arguments as the proof summary for Theorem \ref{thm:bayes}, if $v^* = v$ then $f_v(t)$ is the total number of public signals distributed according to $Q_1$ until timestep $t$. We therefore expect that the uncertainty in the source location is too large to reasonably estimate the source in the regime $f_v(t) \lesssim \log n$ (equivalently, $t \lesssim f_v^{-1}(\log n)$), since the entropy of the prior $\pi(V_n)$ is $\log n$. Hence any algorithm must observe for at least $f_v^{-1}(\log n)$ timesteps to reliably estimate the source. Averaging over $v \in V_n$ leads to the lower bound $\val_B^*(V_n) \gtrsim \frac{1}{n}\sum_{v \in V_n} f_v^{-1}(\log n)$. On the other hand, the minimax setting captures the {\it worst-case} expected number of samples as opposed to the average-case number of samples, hence $\val_M^*(V_n, \alpha) \gtrsim \max_{v \in V_n } f_v^{-1}(\log n)$. 

Establishing upper bounds for $\val_B^*(V_n)$ and $\val_M^*(V_n, \alpha)$ that are orderwise tight requires an analysis of specific source estimators. Since our analysis of $(T_{th}, \widehat{\mathbf{v}}_B)$ is quite similar for both regular trees and lattices (with only minor differences), we expect that it should achieve optimal performance in general as well. We also believe that a properly designed MSPRT can achieve optimal performance in the minimax setting as well; we provide further details on this point in Section \ref{subsec:minimax_upper_bound} (see Remark \ref{remark:general_msprt}). 

\revision{

\section{Simulations}
\label{sec:simulations}

In this section, we complement our theoretical results through simulations which reveal the \emph{non-asymptotic} performance of our source estimators. Specifically, we study the performance of two estimators: the Bayes estimator described in \eqref{eq:v_B} and \eqref{eq:T_th}, and the MSPRT used to prove the achievability results in Theorem \ref{thm:minimax}. At a high level, our simulations show that even in non-asymptotic regimes, our estimators are able to locate the source while ensuring that only a small number of individuals are infected, thus validating our theoretical results on trees and lattices. We further apply our estimators to cascades spreading on natural models of random graphs (the Erd\H{o}s-R\'{e}nyi model), showing that our estimators can be successfully applied to broader scenarios.

\begin{figure*}[t]
\centering
\begin{subfigure}{0.45 \textwidth}
\centering
\includegraphics[width= \textwidth]{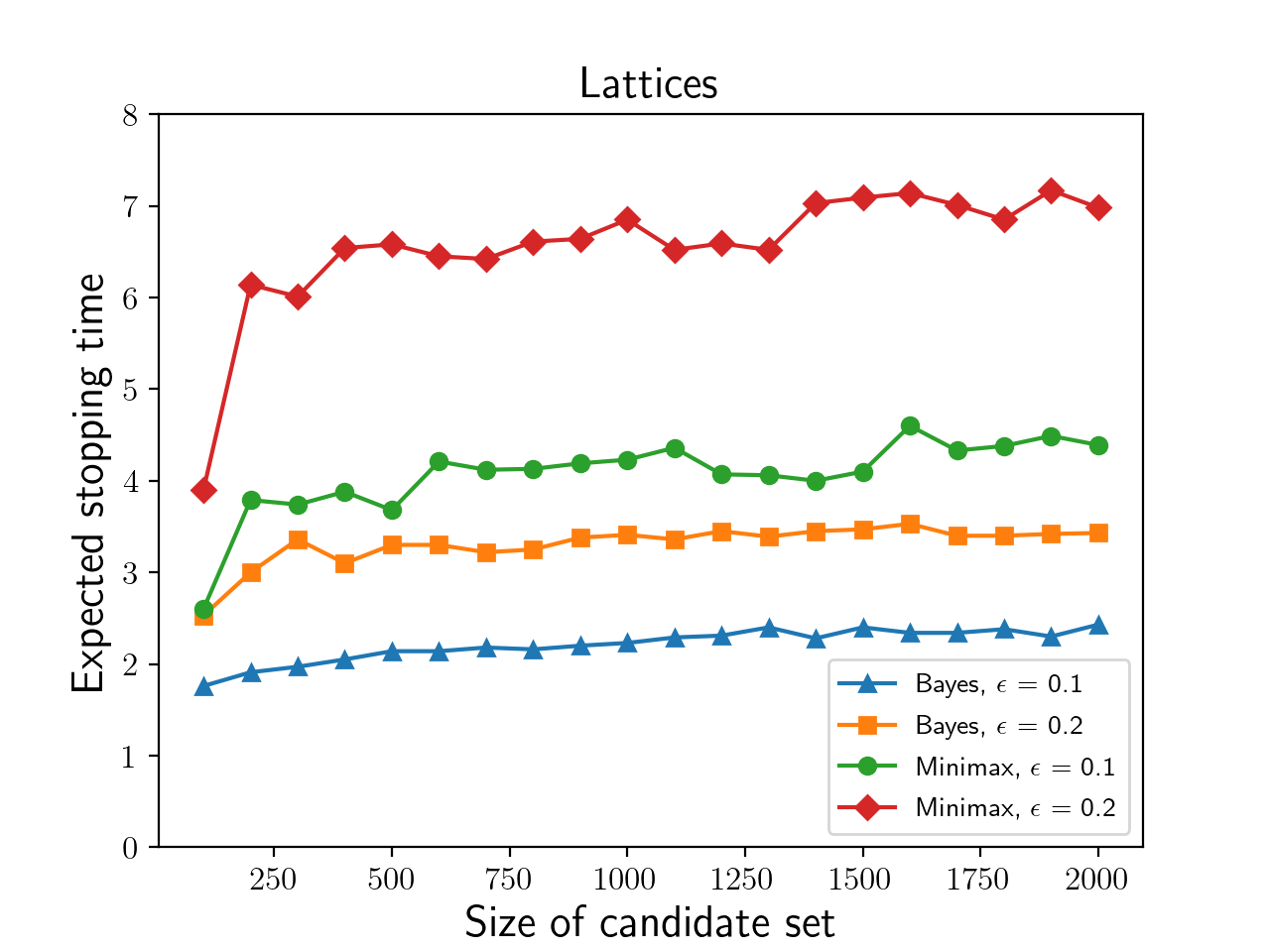}
\caption{}
\end{subfigure}%
\begin{subfigure}{0.45 \textwidth}
\centering
\includegraphics[width=\textwidth]{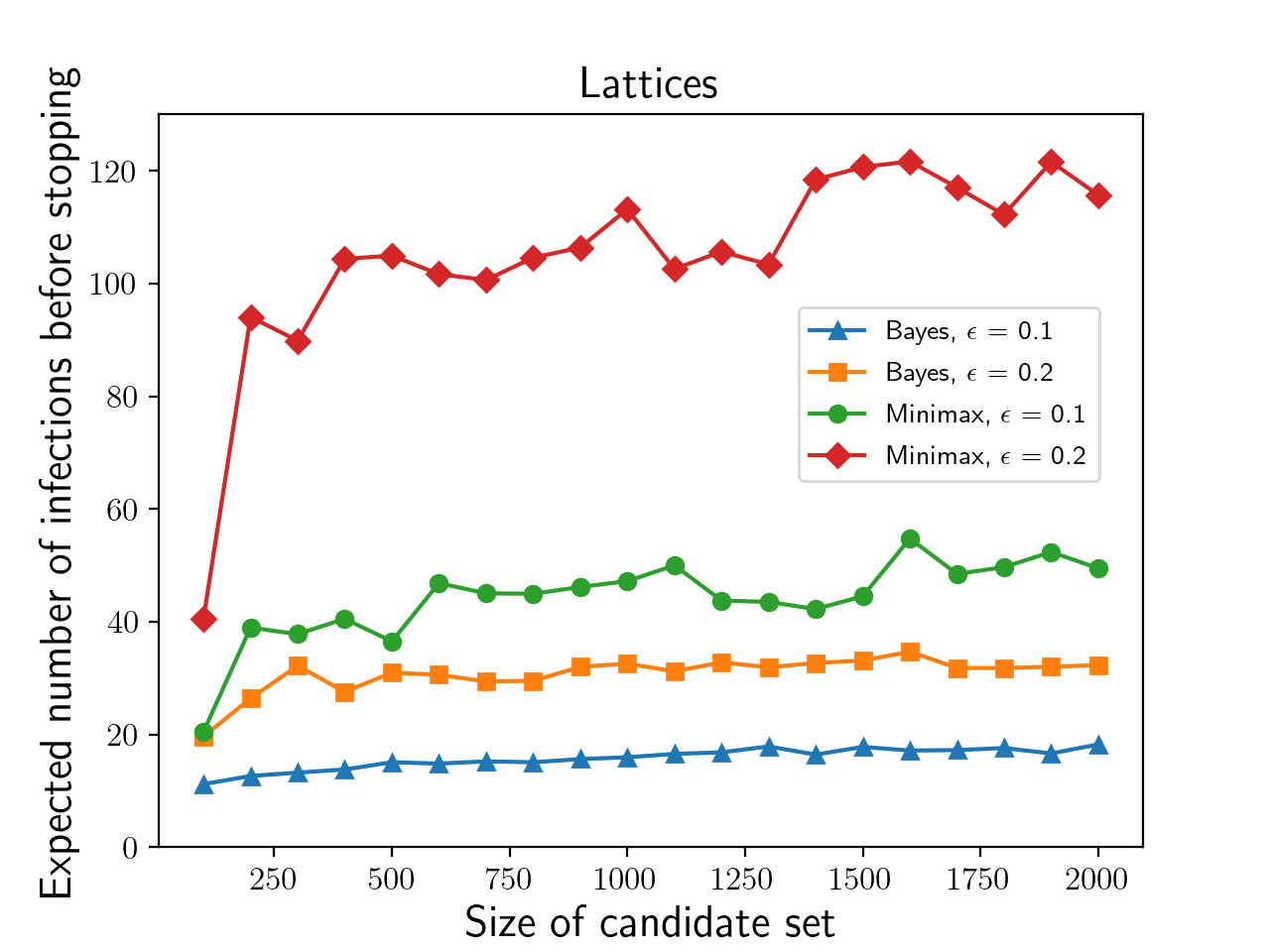}
\caption{}
\end{subfigure}
\caption{Plots of the expected stopping time $(a)$ and the expected number of infections $(b)$ as a function of $n$, the size of the candidate set. Pictured here are the performances of the Bayes and minimax-optimal source estimators in 2-dimensional lattices. 
}
\label{fig:lattice_plots}
\end{figure*}

\paragraph{Signal distributions.} We consider the case of noisy and incomplete testing, described in Remark \ref{remark:data_model}; we briefly recap the model here. Interpret the network cascade as an infection, and assume that at every timestep, each individual tests for infection with probability $p$. The test outputs the correct result (i.e., positive or negative) with probability $1 - \epsilon$. In our simulations, we let $p = 0.5$ and $\epsilon \in \{0.1, 0.2 \}$. The distributions $Q_0, Q_1$ derived from this scenario are formally described in Remark \ref{remark:data_model}. 

\paragraph{Lattices.} To make our simulations as close to our theoretical setup as possible, our base graph $G$ was taken to be a 2-dimensional 100 x 100 lattice (10,000 vertices). If the size of the candidate set is $n$, in accordance with Assumption \ref{as:candidate_set}, we chose the candidate set to be the $n$ closest vertices to the center of the lattice. We emphasize that choosing the candidate set in this way captures the notion of a \emph{worst-case candidate set} (see the discussion surrounding Assumption \ref{as:candidate_set}), and we choose vertices close to the lattice center only to avoid boundary effects (i.e., to ensure that the cascade will evolve similarly from all potential source vertices, given that $G$ is finite). In our simulations, the cascade begins at the lattice center and spreads via the deterministic dynamics described in Assumption \ref{as:cascade}, producing random observed vertex-level signals according to \eqref{eq:public_signals}. Although we could, in principle, choose any source vertex in the candidate set, we consistently choose the lattice center in order to reduce the variance of the estimators' performance across independent simulations. For each choice of $n$ (from 100 to 2000, collected at regular intervals of 100) and $\epsilon \{ 0.1, 0.2 \}$, we carried out 100 independent simulations of the cascade. We averaged over the stopping time and the number of total infections until the stopping time to generate the plots in Figure \ref{fig:lattice_plots}. The design of the MSPRT weights for the minimax estimator follows Figure \ref{fig:msprt}; for the specific threshold values, see Theorem \ref{thm:lattices} in Section \ref{subsec:minimax_upper_bound}. 

Figure \ref{fig:lattice_plots} highlights important \emph{finite-size} behaviors of the Bayes and minimax estimators. Notice that each curve is quite flat: the expected stopping time as well as the number of infections changes little with respect to $n$. This weak dependence extends to the asymptotic regime $n \to \infty$ as well; Theorems \ref{thm:bayes} and \ref{thm:minimax} show that the expected stopping time scales as $(\log n)^{1/3}$. Moreover, the Bayes estimator has strictly better performance than the minimax estimator in all cases -- notably, nearly 100 infections are prevented in the case $\epsilon = 0.2$ for large $n$ when comparing the Bayes and minimax estimators.

\begin{figure*}[t]
\centering
\begin{subfigure}{0.45 \textwidth}
\centering
\includegraphics[width= \textwidth]{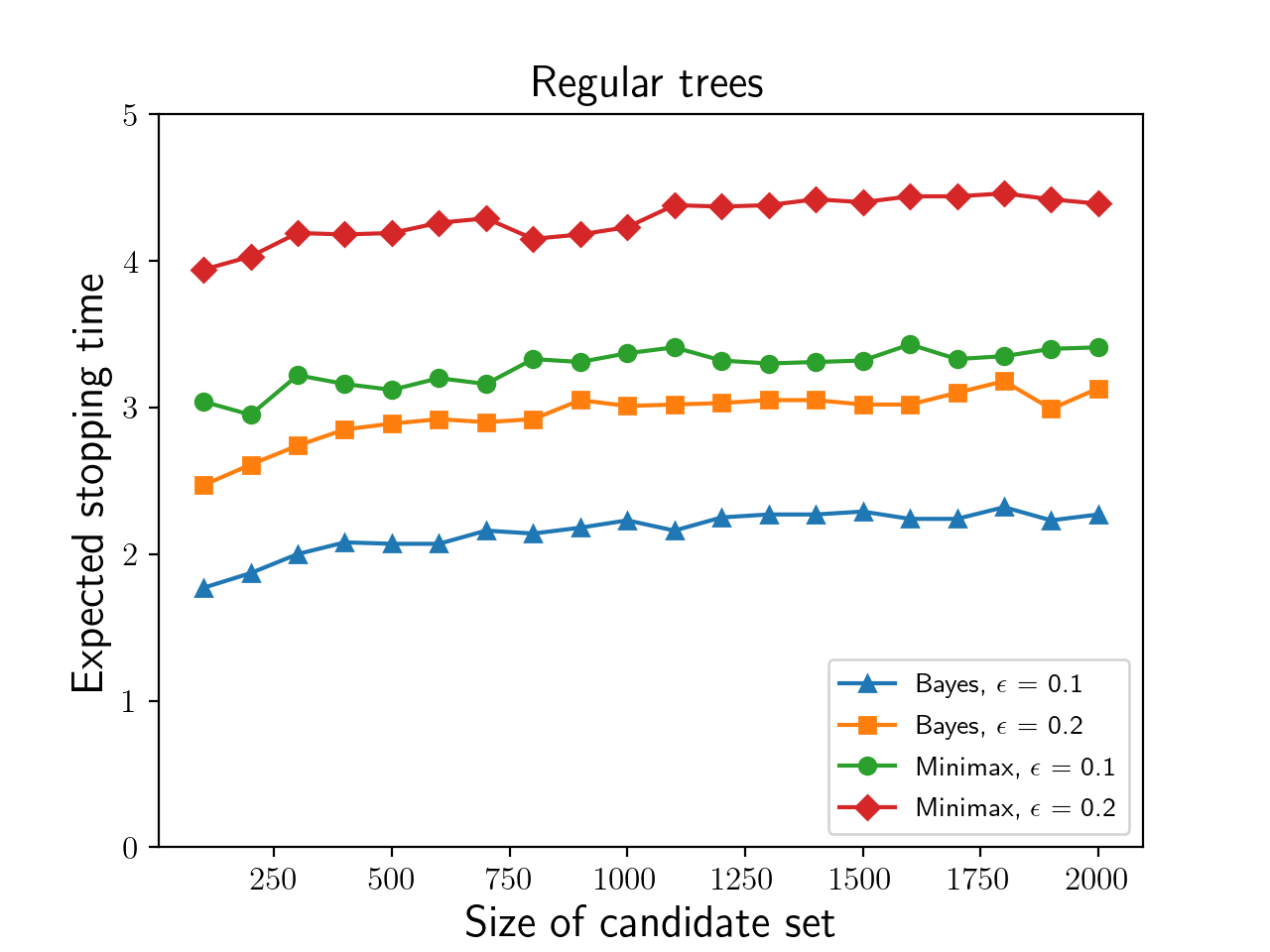}
\caption{}
\end{subfigure}%
\begin{subfigure}{0.45 \textwidth}
\centering
\includegraphics[width=\textwidth]{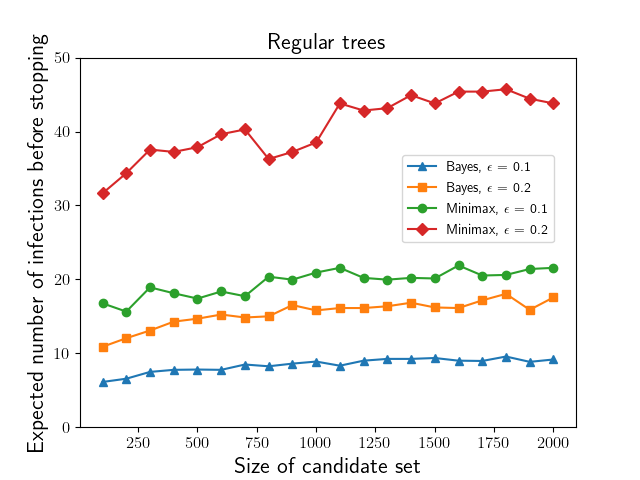}
\caption{}
\end{subfigure}
\caption{Plots of the expected stopping time $(a)$ and the expected number of infections $(b)$ as a function of $n$, the size of the candidate set. Pictured here are the performances of the Bayes and minimax-optimal source estimators in 3-regular trees. 
}
\label{fig:tree_plots}
\end{figure*}

\paragraph{Trees.} Our base graph $G$ was taken to be a 3-regular balanced tree with 16,383 vertices. If the size of the candidate set is $n$, we choose the candidate set to be the $n$ closest vertices to the root of balanced tree $G$ for similar reasons as in the case of lattices. In our simulations, the cascade begins at the root of $G$ and spreads via the deterministic dynamics described in Assumption \ref{as:cascade}, producing random observed vertex-level signals according to \eqref{eq:public_signals}. For each choice of $n$ and $\epsilon$, we carried out 100 independent simulations of the cascade and average over the stopping time as well as the number of total infections to generate the plots in Figure \ref{fig:tree_plots}. For the minimax estimator, we use an MSPRT with \emph{constant} weights, which is proved to be asymptotically optimal; see Theorem \ref{thm:trees} in Section \ref{subsec:minimax_upper_bound} for details. 

Similar conclusions as in the case of lattices can be drawn for trees based on Figure \ref{fig:tree_plots}. Interestingly, although the Bayes and minimax estimators take $\log \log n$ samples in light of Theorems \ref{thm:bayes} and \ref{thm:minimax}, Figure \ref{fig:tree_plots} shows that the Bayes-optimal estimator stops earlier in finite regimes; we believe this is due to the provable optimality of the estimator (see Lemma \ref{lemma:optimal_estimator}). Furthermore, although we have proved that the distance between $Q_0$ and $Q_1$ does not affect the first-order asymptotics of the expected stopping time in both Bayes and minimax settings, it appears to play a significant role in finite regimes. Notably, the time it takes the minimax-optimal estimator to stop is more than doubled when $\epsilon = 0.2$ compared to $\epsilon = 0.1$ for many values of $n$.

\begin{figure*}[t]
\centering
\begin{subfigure}{0.45 \textwidth}
\centering
\includegraphics[width= \textwidth]{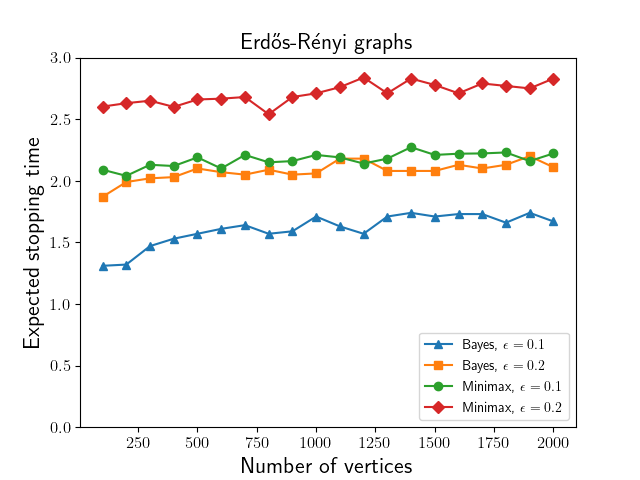}
\caption{}
\end{subfigure}%
\begin{subfigure}{0.45 \textwidth}
\centering
\includegraphics[width=\textwidth]{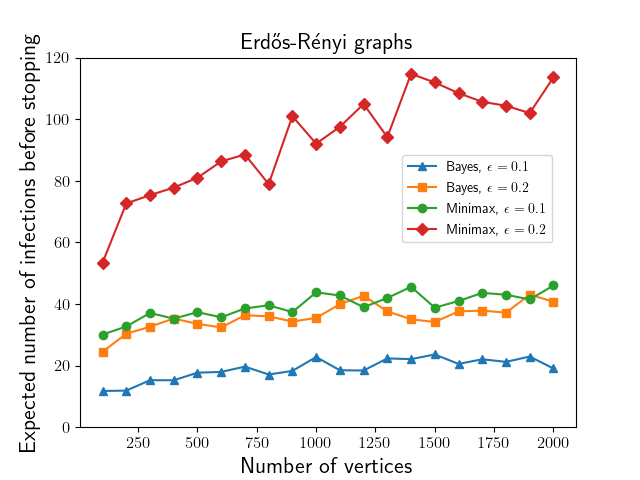}
\caption{}
\end{subfigure}
\caption{Plots of the expected stopping time $(a)$ and the expected number of infections $(b)$ as a function of $n$, the size of the candidate set. Pictured here are the performances of the Bayes and minimax-optimal source estimators in the Erd\H{o}s-R\'{e}nyi graph $G(n, 5/n)$. 
}
\label{fig:er_plots}
\end{figure*}

\paragraph{Erd\H{o}s-R\'{e}nyi random graphs.} Recall that for a positive integer $n$ and $q \in [0,1]$, an Erd\H{o}s-R\'{e}nyi random graph $G(n,q)$ is generated as follows. Let the vertex set $V$ be a set of $n$ labeled vertices, and for each pair of distinct vertices an edge is added between them with probability $q$, independently across all vertex pairs. Since our work is primarily concerned with \emph{sparse} graphs (i.e., vertex degrees are not too large), we chose $q = 5 / n$ to ensure that the average degree of the graph is 5. This choice of $q$ ensures that the largest connected component of $G(n,q)$ (also known as the \emph{giant component}) is most of the graph, while also keeping the average degree relatively small. For a given realization of $G(n,q)$, our candidate set was taken to be the vertices in the giant component.\footnote{It is known that with high probability, components other than the giant component are of order $\log n$ \cite[Chapter 11.9]{alon_spencer}. As a result, the infection will never reach most of the graph even after a long time passes. Our assumption that the candidate set is the vertex set of the giant component avoids such simple edge cases.} Figure \ref{fig:er_plots} shows the performance of the Bayes and minimax estimators on Erd\H{o}s-R\'{e}nyi graphs, both of which exhibit similar performance to that noted for trees and lattices. For the minimax estimator, since sparse Erd\H{o}s-R\'{e}nyi graphs are known to be locally tree-like \cite{alon_spencer}, we use the MSPRT with uniform weights (see Section \ref{subsec:minimax_upper_bound}) which is optimal in regular trees. To summarize, Figure \ref{fig:er_plots} shows that the estimators we develop are \emph{robust} and apply to a broader class of graphs than the ones we analyze theoretically. 

To generate the numerical results in Figure \ref{fig:er_plots}, as before we ran 100 independent simulations for each $n$ and $\epsilon$ considered. For each simulation, an independent Erd\H{o}s-R\'{e}nyi graph was generated. The data in Figure \ref{fig:er_plots} were computed by averaging the stopping times and number of infected vertices, \emph{conditioned} on the event that the cascade did not spread to all vertices by the time the algorithm stopped. The reason for this is that if the cascade affects all vertices, there is effectively no new information to be learned, and the stopping time would be extremely large with high probability. In almost all cases, however, at most one out of the 100 trials would fall into this category. The only exception was the case of $n = 100$ and $\epsilon = 0.2$ for the minimax estimator, which had 12 trials fall into this category. We expect that this is because the cascade spreads too quickly to detect it on a graph of this small size. Finally, we remark that some of the curves in Figure \ref{fig:er_plots} may appear noisier than the ones in Figures \ref{fig:tree_plots} and \ref{fig:lattice_plots}; this is likely due to the randomness of the base graph in the Erd\H{o}s-R\'{e}nyi case, compared to the deterministic nature of the other topologies considered.

}

\section{Analysis of the Bayesian setting}
\label{sec:bayes}

\subsection{Behavior of the Bayes-optimal estimator}

We begin with a discussion of the estimator $\widehat{\mathbf{v}}_B = \{ \widehat{v}_B(t) \}_{t \ge 0}$, which is defined formally by 
$$
\widehat{v}_B(t) \in \argmin\limits_{v \in V_n} \E_{\pi(t)} [ d(v^*, v) ],
$$
where $V_n$ is the candidate set under consideration. It is straightforward to show that $\widehat{\mathbf{v}}_B$ is {\it optimal}, in the sense that it minimizes the error of the final source estimator for any choice of stopping time. This is explained more formally in the following result. 

\begin{lemma}
\label{lemma:optimal_estimator}
Let the candidate set be $V_n$. Fix any stopping time $T$ and let $\widehat{\mathbf{u}} = \{ u(t) \}_{t \ge 0}$ be any source estimator so that $u(t)$ is measurable with respect to $y(0), \ldots, y(t)$. Then 
$$
\E_{\pi(V_n)} [ d(v^*, \widehat{v}_B(T)) ] \le \E_{\pi(V_n)} [ d(v^* ,\widehat{u}(T) ) ].
$$
In particular, 
$$
\inf\limits_{(T, \widehat{\mathbf{v}})} \val_B(T, \widehat{\mathbf{v}}) = \inf\limits_{T} \val_B(T, \widehat{\mathbf{v}}_B).
$$
\end{lemma}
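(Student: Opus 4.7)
The plan is to reduce the claim to a pointwise statement under the posterior and then invoke the tower property of conditional expectation. Note first that the stopping time $T$ and the sequence of source estimators $\widehat{\mathbf{v}} = \{\widehat{v}(t)\}_{t \ge 0}$ are chosen independently, and the sampling-cost term $\E_{\pi(V_n)}[T]$ appearing in $\val_B(T, \widehat{\mathbf{v}})$ does not depend on the choice of $\widehat{\mathbf{v}}$. Hence the second claim of the lemma will follow from the first by interchanging infima, so it suffices to prove that for any fixed stopping time $T$ and any source estimator $\widehat{\mathbf{u}} = \{\widehat{u}(t)\}_{t\ge 0}$ with $\widehat{u}(t)$ measurable with respect to $\cF_t := \sigma(y(0), \ldots, y(t))$,
$$
\E_{\pi(V_n)}[d(v^*, \widehat{v}_B(T))] \le \E_{\pi(V_n)}[d(v^*, \widehat{u}(T))].
$$

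For the main bound I would condition on $\cF_T$. By the tower property,
$$
\E_{\pi(V_n)}[d(v^*, \widehat{u}(T))] = \E_{\pi(V_n)}\bigl[ \E_{\pi(V_n)}[d(v^*, \widehat{u}(T)) \mid \cF_T] \bigr].
$$
Since $\widehat{u}(T)$ is $\cF_T$-measurable, on the event $\{\widehat{u}(T) = u\}$ the inner conditional expectation equals $\E_{\pi(T)}[d(v^*, u)]$, where $\pi(T)$ denotes the posterior law of $v^*$ given $\cF_T$. By definition, $\widehat{v}_B(T) \in \argmin_{v \in V_n} \E_{\pi(T)}[d(v^*, v)]$, so pointwise on $\cF_T$,
$$
\E_{\pi(V_n)}[d(v^*, \widehat{u}(T)) \mid \cF_T] = \E_{\pi(T)}[d(v^*, \widehat{u}(T))] \ge \E_{\pi(T)}[d(v^*, \widehat{v}_B(T))] = \E_{\pi(V_n)}[d(v^*, \widehat{v}_B(T)) \mid \cF_T].
$$
Taking expectations on both sides yields the desired inequality.

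With the first part in hand, the second statement follows from
$$
\inf_{(T, \widehat{\mathbf{v}})} \val_B(T, \widehat{\mathbf{v}}) = \inf_T \inf_{\widehat{\mathbf{v}}} \val_B(T, \widehat{\mathbf{v}}) = \inf_T \val_B(T, \widehat{\mathbf{v}}_B),
$$
where the second equality uses that $\widehat{\mathbf{v}}_B$ is a minimizer over $\widehat{\mathbf{v}}$ for each fixed $T$. I do not expect any serious technical obstacle; the argument is standard Bayes-risk bookkeeping. The only delicate point is ensuring that the argmin defining $\widehat{v}_B(t)$ admits a measurable selection, which is immediate because $V_n$ is finite and ties can be broken by a fixed lexicographic order on $V_n$, guaranteeing that $\widehat{v}_B(t)$ is $\cF_t$-measurable and hence that $\widehat{v}_B(T)$ is a bona fide random variable.
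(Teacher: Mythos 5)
Your proof is correct and follows essentially the same route as the paper: both reduce to the pointwise optimality of $\widehat{v}_B$ under the posterior and then apply the tower property. The paper decomposes $\E_{\pi(V_n)}[\,\cdot\,]$ via $\sum_{t\ge 0}\mathbf{1}(T=t)$ rather than conditioning on $\cF_T$ directly, but these are equivalent bookkeeping devices; your remark on measurable selection, while not in the paper, is a harmless and valid addition.
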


\begin{proof}
For any given time index $t \ge 0$, it follows from the definition of $\widehat{v}_B(t)$ that 
\begin{equation}
\label{eq:estimator_inequality}
\E_{\pi(t)} [ d(v^*, \widehat{v}_B(t)) ] \le \E_{\pi(t)} [ d(v^*, \widehat{u}(t)) ].
\end{equation}
We can then write
\begin{align*}
\E_{\pi(V_n)} [ d(v^*, \widehat{v}_B(T)) ] & \stackrel{(a)}{=} \E_{\pi(V_n)} \left[ \E_{\pi(T)} [ d(v^*, \widehat{v}_B(T)) ] \right] \\
& = \E_{\pi(V_n) } \left[ \sum\limits_{t = 0}^\infty \E_{\pi(t)} [ d(v^*, \widehat{v}_B(t)) ] \mathbf{1}(T = t) \right] \\
& \stackrel{(b)}{\le} \E_{\pi(V_n) } \left[ \sum\limits_{t = 0}^\infty \E_{\pi(t)} [ d(v^*, \widehat{u}(t) ) ] \mathbf{1}(T = t) \right] \\
& = \E_{\pi(V_n)} [ d(v^*, \widehat{u}(T)) ].
\end{align*}
Above, $(a)$ is due to the tower rule and $(b)$ is a consequence of \eqref{eq:estimator_inequality}. Taking an infimum over all source estimators $(T, \widehat{\mathbf{u}})$ on both sides yields the second statement of the lemma.
\end{proof}

With the optimal estimator explicitly derived, we focus on characterizing the estimation error of $\widehat{\mathbf{v}}_B$, which will in turn aid us in characterizing optimal stopping times. Indeed, the proof of Theorem \ref{thm:bayes} depends on the following two lemmas which characterize the estimation error of $\widehat{\mathbf{v}}_B$. Before stating them, we review some basic properties of the neighborhood growth function $f(t)$ (defined in \eqref{eq:neighborhood_growth_fn}). First, it can be shown through straightforward combinatorial arguments that 

\begin{equation}
\label{eq:f_asymptotics}
f(t) \sim \begin{cases}
\frac{k(k-1)}{(k-2)^2} (k-1)^t & \hspace{-0.1cm}\text{$G$ is a $k$-regular tree, $k \ge 3$;} \\
 \frac{2^\ell}{(\ell + 1)!} t^{\ell + 1} & \hspace{-0.1cm}\text{$G$ is a $\ell$-dimensional lattice, $\ell \ge 1$.}
\end{cases}
\end{equation}
A proof of \eqref{eq:f_asymptotics} can be found in Appendix \ref{sec:size_of_neighborhoods} -- see in particular Lemmas \ref{lemma:trees_exact} and \ref{lemma:lattices_exact}. Importantly, \eqref{eq:f_asymptotics} can be used to study the asymptotics of the {\it inverse function} $F = f^{-1}$. Indeed, it follows that 

\begin{equation}
\label{eq:F_asymptotics}
F(z) \sim \begin{cases}
\frac{\log z}{\log (k-1)} & \hspace{-0.1cm}\text{$G$ is a $k$-regular tree, $k \ge 3$;} \\
\left( \frac{ (\ell + 1)!}{2^\ell} z \right)^{\frac{1}{\ell + 1}} & \hspace{-0.1cm}\text{$G$ is a $\ell$-dimensional lattice, $\ell \ge 1$.}
\end{cases}
\end{equation}
We are now ready to state our results on the estimation error. The first establishes a lower bound for the estimation error when $t$ is not too large. 

\begin{lemma}
\label{lemma:estimation_error_lower_bound}
Suppose $G$ is a $k$-regular tree with $k \ge 3$ and that the sequence of candidate sets $\{V_n \}_{n \ge 1}$ satisfies Assumption \ref{as:candidate_set}. There are constants $a_1 = a_1(k, Q_0, Q_1)$ and $b_1 = b_1(k)$ such that
$$
\lim\limits_{n \to \infty} \max\limits_{v \in V_n} \p_v \left( \min_{0 \le t \le F(a_1 \log n) }\E_{\pi(t)} [ d(v^*, \widehat{v}_B(t) )  ] \le b_1 \log n \right) = 0.
$$
Next suppose that $G$ is a $\ell$-dimensional lattice and $\{V_n \}_{n \ge 1}$ satisfies Assumption \ref{as:candidate_set}. There are constants $a_1' = a_1'(Q_0, Q_1)$ and $b_1' = b_1'(\ell)$ such that 
$$
\lim\limits_{n \to \infty} \max\limits_{v \in V_n} \p_v \left(  \min_{0 \le t \le F(a_1' \log n)} \E_{\pi(t)} [ d(v^*, \widehat{v}_B(t) )] \le b_1' n^{1/\ell} \right) = 0.
$$
\end{lemma}

It can be shown through straightforward combinatorial arguments (see Lemmas \ref{lemma:tree_distance_bounds} and \ref{lemma:geodesic_lattice} in Appendix \ref{sec:geodesics}) that the initial estimation error satisfies 
\begin{align}
\label{eq:initial_error}
\E_{\pi(0)}[ d(v^*, \widehat{v}_B(0)) ] & \asymp \begin{cases}
\log n & \text{ $G$ is a $k$-regular tree;} \\
n^{1/\ell} & \text{ $G$ is a $\ell$-dimensional lattice.}
\end{cases}
\end{align}
In light of \eqref{eq:initial_error}, Lemma \ref{lemma:estimation_error_lower_bound} states that with high probability, the estimation error does not significantly decrease for $t \lesssim F(\log n)$. At a high level, this is because the information from the public signals corresponding to the true spread of the cascade is not enough to offset the uncertainty in the source location. These ideas are formalized by computing the mean and variance of $\E_{\pi(t)} [ d(v^*, \widehat{v}(t)) ]$ and applying Chebyshev's inequality. We also remark that as a consequence of our proofs, the constant $a_1$ in Lemma \ref{lemma:estimation_error_lower_bound} depends on the average degree for regular trees, while the constant $a_1'$ does not depend on the average degree in lattices. For details, see Section \ref{subsec:lower_bound}.

The next result establishes an upper bound on the estimation error once $t$ is sufficiently large. 

\begin{lemma}
\label{lemma:estimation_error_upper_bound}
Suppose that $G$ is a $k$-regular tree or a $\ell$-dimensional lattice and that the sequence of candidate sets $\{V_n \}_{n \ge 1}$ satisfies Assumption \ref{as:candidate_set}. There are constants $a_2 = a_2(Q_0, Q_1)$ and $b_2 = b_2 (Q_0, Q_1)$ such that if $t \ge F(a_2 \log n)$, 
\begin{equation}
\label{eq:estimation_error_upper_bound}
\max\limits_{v \in V_n} \p_{v} \left( \E_{\pi(t)} [ d(v^*, \widehat{v}_B(t) ) ] \ge e^{ - b_2 t} \right) \le e^{- b_2 t}.
\end{equation}
\end{lemma}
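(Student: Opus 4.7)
The plan is to show that under $\p_v$ (with $v$ the true source), the posterior $\pi(t)$ concentrates sharply around $v$, forcing the expected posterior distance to be exponentially small in $t$. Since $\widehat{v}_B(t)$ minimizes the posterior expected distance over $V_n$, comparing it against the (possibly suboptimal) choice $v$ gives $\E_{\pi(t)}[d(v^*, \widehat{v}_B(t))] \le \E_{\pi(t)}[d(v^*, v)] = \sum_{u \in V_n} \pi_u(t) \, d(u,v)$. With the uniform prior $\pi(V_n)$, Bayes's rule yields $\pi_u(t) = L_u / \sum_w L_w \le L_u$, where $L_u := (d\p_u/d\p_v)(y(0),\ldots,y(t))$ and we use $L_v = 1$ in the denominator. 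Since $d(u,v) \le 2(r_n + 1)$ on the candidate set, it suffices to show that $\sum_{u \ne v} L_u \, d(u,v) \le e^{-b_2 t}$ with $\p_v$-probability at least $1 - e^{-b_2 t}$.

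The next step is a Bhattacharyya-type moment bound on $L_u$. Under $\p_v$, the signal $y_{u'}(s)$ has law $Q_{\mathbf{1}(d(u',v) \le s)}$, while under $\p_u$ it has law $Q_{\mathbf{1}(d(u',u) \le s)}$; by the product structure of the model,
\[
\E_v\bigl[L_u^{1/2}\bigr] \;=\; \int \sqrt{d\p_u \, d\p_v} \;=\; \rho^{N(u,v,t)}, \qquad \rho \;:=\; \int \sqrt{dQ_0 \, dQ_1} \;\in\; (0,1),
\]
where $N(u,v,t) := \sum_{s=0}^t |\cN_u(s) \triangle \cN_v(s)|$ counts the number of vertex-time pairs on which the two hypotheses disagree on the signal distribution. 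Markov's inequality then gives $\p_v(L_u \ge \rho^{N(u,v,t)}) \le \rho^{N(u,v,t)/2}$.

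The next ingredient is a combinatorial lower bound on $N(u,v,t)$. In a $k$-regular tree, even adjacent $u,v$ yield $N(u,v,t) = 2\sum_{s=0}^t (k-1)^s \asymp f(t)$, and $N(u,v,t) \gtrsim f(t)$ holds uniformly in $u \ne v$. In an $\ell$-dimensional lattice, a vertex $u$ with $d(u,v) = d \le t$ satisfies $N(u,v,t) \gtrsim d \cdot t^{\ell}$ (the symmetric difference at radius $s \ge d$ forms a crescent of size $\asymp d \cdot s^{\ell-1}$), while $d > t$ yields $N(u,v,t) \gtrsim t^{\ell+1} \asymp f(t)$. On the high-probability event $\{L_u \le \rho^{N(u,v,t)} \text{ for all } u \ne v\}$, I bound $\sum_{u \ne v} L_u \, d(u,v) \le \sum_{u \ne v} \rho^{N(u,v,t)} d(u,v)$. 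For the tree, the crude bound $n \cdot 2 r_n \cdot \rho^{c f(t)} \le n^{1 - c' a_2} \log n$ is $\le e^{-b_2 t}$ once $a_2$ is large enough, since $t \asymp \log \log n$. For the lattice, stratifying by $d = d(u,v)$ and using that the sphere $\{u : d(u,v) = d\}$ has size $\asymp d^{\ell-1}$, the sum $\sum_{d=1}^t d^{\ell} \rho^{d t^{\ell}/2}$ is geometric in $d$ and dominated by the $d = 1$ term $\asymp \rho^{t^{\ell}/2}$; the tail $d > t$ contributes at most $n \cdot r_n \cdot \rho^{t^{\ell+1}/2}$, which is negligible for $a_2$ sufficiently large. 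The same sum controls the union-bound probability of the bad event.

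The main obstacle is the lattice case with $\ell \ge 2$, where vertices adjacent to $v$ achieve $N(u,v,t) \asymp t^{\ell}$, which is substantially smaller than $f(t) \asymp t^{\ell+1}$. A naive union bound at this worst rate against all $n$ vertices would fail, since $n \rho^{t^{\ell}/2}$ is not small at $t = F(a_2 \log n)$ for any fixed constant $a_2$. The distance-stratified bound above circumvents this: the $O(\ell)$ adjacent vertices contribute $\rho^{t^{\ell}/2}$ on their own, and the polynomial sphere growth $d^{\ell-1}$ at larger distances is dominated by the exponential-in-$d$ suppression $\rho^{d t^{\ell}/2}$. Since $t^{\ell} \ge t$ for all $\ell \ge 1$ and $t \ge 1$, taking $b_2 := |\log \rho|/2$ would give $\rho^{t^{\ell}/2} \le e^{-b_2 t}$ for both the value and probability bounds, completing the argument.
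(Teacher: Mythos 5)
Your proof is correct, but it takes a genuinely different route from the paper's, on both the probabilistic and the combinatorial side. The paper starts from the same comparison $\E_{\pi(t)}[d(v^*,\widehat v_B(t))]\le \sum_w d(w,v) X_w(t)/X_v(t)$, but then controls the likelihood ratios via a Chernoff bound (Lemma~\ref{lemma:llr_chernoff}) built around the symmetrized KL divergence $D(Q_0,Q_1)$ and a rate function $I(\cdot)$ whose positivity must itself be established; you instead use the Bhattacharyya coefficient $\rho=\int\sqrt{dQ_0\,dQ_1}\in(0,1)$ and the clean half-power moment $\E_v[L_u^{1/2}]=\rho^{N(u,v,t)}$, which is tidier and comes with positivity ``for free.'' On the combinatorial side, the paper does \emph{not} stratify by $d(u,v)$: it splits $V_n$ into $w$ with $d(w,v)\le 2t$ and $d(w,v)>2t$, uses the crude bound $f_{vw}(t)\ge f_1(t)\ge t+1$ on the near set (which has only $|\cN(2t)|\asymp t^\ell$ elements, so a flat union bound suffices there), and $f_{vw}(t)=f(t)$ on the far set; the $n^2 e^{-\theta f(t)}$ far-term is killed by choosing $a_2$ large. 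Your worry that ``a naive union bound against all $n$ vertices would fail'' is correct, but the paper's fix is this simpler two-part split rather than your distance-stratified sum $\sum_d d^\ell\rho^{d t^\ell/2}$; your refinement is sharper (it tracks $N(u,v,t)\gtrsim d(u,v)\,t^\ell$) but is more work than the lemma requires, since the near set is polynomial in $t$. Both routes land on the same exponential-in-$t$ bound for the value and the probability; you should just check that $\ell\ge 2$ in a couple of places (e.g., ``sphere size $\asymp d^{\ell-1}$'') doesn't silently degenerate in $\ell=1$, where the sphere has size $2$ — it still works, but worth a sentence.
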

\noindent The proof relies on large-deviations bounds which show that $\pi_{u}(t) / \pi_{v^*}(t)$ tends to 0 at an exponential rate for any $u \neq v^*$. For details, see Section \ref{subsec:upper_bound}.

\subsection{Putting everything together: Proof of Theorem \ref{thm:bayes}}

Combined, Lemmas \ref{lemma:estimation_error_lower_bound} and \ref{lemma:estimation_error_upper_bound} show that the estimation error exhibits a sharp transition: it is large for $t \lesssim F(\log n)$ and it is exponentially decaying to zero for $t \gtrsim F( \log n)$. The optimality and performance of the estimator $(T_{th}, \widehat{\mathbf{v}}_B)$ is essentially obtained from this observation. 

\begin{proof}[Proof of Theorem \ref{thm:bayes}]
Define the pair of constants $a,A$ so that $(a, A) = (a_1, b_1 \log n)$ if $G$ is a $k$-regular tree and $(a,A) = (a_1', b_1' n^{1/\ell})$ if $G$ is a $\ell$-dimensional lattice (see Lemma \ref{lemma:estimation_error_lower_bound} for definitions of these constants). For every $t \ge 0$, we also define the event
$$
\cE: = \left \{ \min\limits_{0 \le t \le F(a \log n)} \E_{\pi(t)} [ d(v^* , \widehat{v}_B(t)) ] \ge A \right \}.
$$
For any stopping time $T$, we have, for any $v \in V_n$, 
\begin{align}
\E_v \left[ \E_{\pi(T)} [ d(v^*, \widehat{v}_B(T)) ] + T \right] & = \E_v \left[ \sum\limits_{t = 0}^{\infty} \E_{\pi(t)} [ d(v^*, \widehat{v}_B(t)) ] \mathbf{1} (T = t) + T \right] \nonumber \\
& \ge \E_v \left[ \sum\limits_{t = 0}^{F(a \log n)} \E_{\pi(t)} [ d(v^*, \widehat{v}_B(t)) ] \mathbf{1}(\{T = t\} \cap \cE )  + T \mathbf{1}(T > F(a \log n) ) \vphantom{ \sum\limits_{t = 0}^1} \right] \nonumber \\
& \stackrel{(a)}{\ge} A \sum\limits_{t = 0}^{F (a\log n)} \p_v ( \{T = t \} \cap \cE) + F(a \log n) \cdot \p_v ( T > F(a \log n)) \nonumber \\
& \ge A \cdot \p_v ( \{T \le F(a \log n) \} \cap \cE)  + F(a \log n) \cdot \p_v ( \{T > F(a \log n) \} \cap \cE ) \nonumber \\
& \ge \min \{ A, F(a \log n) \} \cdot \p_v ( \cE) \nonumber \\
\label{eq:valB_lower_bound}
& \stackrel{(b)}{=} F(a \log n) \cdot \p_v ( \cE).
\end{align}
Above, $(a)$ follows from the definition of $\cE$ and by lower bounding $T$ by $F(a \log n)$ on the event $\{T > F(a \log n)\}$, and $(b)$ uses $F(a \log n) \le A$ for $n$ sufficiently large, which follows from the asymptotic behavior of $F$ (see \eqref{eq:F_asymptotics}). An important consequence of \eqref{eq:valB_lower_bound} is that the value associated with the pair $(T, \widehat{\mathbf{v}})$ can be lower bounded as 
\begin{align}
\val_B(T, \widehat{\mathbf{v}}_B) & = \E_{\pi(V_n)} [ d(v^*, \widehat{v}_B(T))  + T ] \nonumber \\
& \stackrel{(c)}{=} \E_{\pi(V_n)} \left[ \E_{\pi(T)} [ d(v^*, \widehat{v}_B(T)) ] + T  \right] \nonumber \\
& \stackrel{(d)}{=} \frac{1}{n} \sum\limits_{v \in V_n} \E_v \left[ \E_{\pi(T)} [ d(v^*, \widehat{v}_B(T)) ] + T \right] \nonumber \\
\label{eq:valB_lower_bound_2}
& \stackrel{(e)}{\ge} F(a \log n) \left( \frac{1}{n} \sum\limits_{v \in V_n} \p_v (\cE) \right).
\end{align}
Above, $(c)$ is due to the tower rule, $(d)$ follows since $\pi(V_n)$ is a uniform distribution over elements of $V_n$, and $(e)$ is a consequence of \eqref{eq:valB_lower_bound}. Moreover, since \eqref{eq:valB_lower_bound_2} holds for {\it any} stopping time $T$, we have
\begin{align*}
\val_B^*(V_n) & = \inf\limits_{T, \widehat{\mathbf{v}}} \val_B(T, \widehat{\mathbf{v}}) \\
& = \inf\limits_T \val_B(T, \widehat{\mathbf{v}}_B)\\
& \ge F(a \log n) \left( \frac{1}{n} \sum\limits_{v \in V_n} \p_v(\cE) \right),
\end{align*}
where the second equality follows from the optimality of the estimator $\widehat{\mathbf{v}}_B$, proved in Lemma \ref{lemma:optimal_estimator}. Rearranging terms and sending $n \to \infty$, we arrive at 
\begin{align}
\liminf\limits_{n \to \infty} \frac{ \val_B^*(V_n) }{ F(a \log n) } & \ge \liminf\limits_{n \to \infty} \left( \frac{1}{n} \sum\limits_{v \in V_n} \p_v ( \cE) \right) \nonumber \\
\label{eq:valB_asymptotic_lower_bound}
& \ge \liminf\limits_{n \to \infty} \min\limits_{v \in V_n} \p_v (\cE) = 1,
\end{align}
where the final equality above is a direct consequence of Lemma \ref{lemma:estimation_error_lower_bound}. 

Next, to establish an asymptotic upper bound for the optimal value, we consider the stopping time 
$$
T_{th} : = \min \{t \ge 0: \E_{\pi(t)} [ d(v^*, \widehat{v}_B(t)) ] \le 1 \},
$$
which stops once the estimation error falls below a threshold. We can then bound
\begin{align*}
\val_B  (T_{th},  \widehat{\mathbf{v}}_B) & \stackrel{(f)}{\le} \E_{\pi(V_n)} [ 1 + T_{th} ] = 1 + \sum\limits_{t = 0}^\infty \p_{\pi(V_n)} ( T_{th} > t) \\
& \le 1 + F(a_2 \log n) + \sum\limits_{t = F(a_2 \log n) + 1}^\infty \p_{\pi(V_n)} ( T_{th} > t )\\ 
& \stackrel{(g)}{\le} 1 + F(a_2 \log n) + \sum\limits_{t = F(a_2 \log n) + 1}^\infty \max\limits_{v \in V_n} \p_v(T_{th} > t) \\
& \stackrel{(h)}{\le} 1 + F(a_2 \log n) + \sum\limits_{t = F(a_2 \log n) + 1}^\infty  e^{- b_2 t} \\
& \le 1 + F(a_2 \log n) + \frac{1}{b_2} e^{- b_2 F(a_2 \log n)}.
\end{align*}
Above, the inequality $(f)$ is due to the definition of $T_{th}$, $(g)$ follows since $\p_{\pi(V_n)} (\cdot) : = \frac{1}{n} \sum_{v \in V_n} \p_v ( \cdot) \le \max_{v \in V_n} \p_v(\cdot)$, and $(h)$ follows from noting that $T_{th} > t$ implies that $\E_{\pi(t)} [ d(v^*, \widehat{v}_B(t)) ] > 1 \ge e^{-b_2 t}$ and applying Lemma \ref{lemma:estimation_error_upper_bound} to bound the latter event. Dividing both sides of the final inequality by $F(a_2 \log n)$ and letting $n \to \infty$ shows that
\begin{equation}
\label{eq:valB_asymptotic_upper_bound}
\limsup\limits_{n \to \infty} \frac{ \val_B(T_{th}, \widehat{\mathbf{v}}_B ) }{ F(a_2 \log n)} \le 1.
\end{equation}
The desired result follows from \eqref{eq:valB_asymptotic_lower_bound} and \eqref{eq:valB_asymptotic_upper_bound} by considering the asymptotic behavior of $F$ in trees and lattices (see \eqref{eq:F_asymptotics}). 
\end{proof}

\revision{
\begin{remark}[General temporal cost functions]
\label{remark:general_temporal_cost}
It is also interesting to consider the case where the cost of the stopping time in the Bayesian objective \eqref{eq:bayes_opt} is given by $h(T)$ instead of $T$, where $h$ is some increasing function. It turns out that a slight modification of the proof of Theorem \ref{thm:bayes} shows that the estimator $(T_{th}, \widehat{\mathbf{v}}_B)$ is still asymptotically near-optimal as long as $h$ grows slower than any exponential function. Indeed, if we follow the derivation of the bound in \eqref{eq:valB_lower_bound} and \eqref{eq:valB_lower_bound_2}, we obtain that
$$
\E_{\pi(V_n)} [ d(v^*, \widehat{v}_B(T) + h(T) ] \ge \min \{ A, h ( F(a \log n)) \} \left( \frac{1}{n} \sum_{v \in V_n} \p_v ( \cE ) \right),
$$
where $A, a, \cE$ are the same as in the proof of Theorem \ref{thm:bayes}. Following \eqref{eq:valB_asymptotic_lower_bound}, we obtain that
$$
\liminf_{n \to \infty} \frac{ \inf_{T}  \E_{\pi(V_n)} [ d(v^*, \widehat{v}_B(T)) + h(T) ] }{ \min \{ A, h( F(a \log n) )\} } \ge 1.
$$
In particular, if $h$ increases slower than any exponential function, it can be seen that, for the choices of $A$ and $F$ used for regular trees and lattices, $h(F(a \log n))$ is an asymptotic lower bound on the performance of any estimator. 

The proof of the upper bound on the performance of the optimal estimator can be similarly derived. For the same estimator $( \widehat{ \mathbf{v}}_B, T_{th})$, we may follow the proof of Theorem \ref{thm:bayes} to show that
\begin{align*}
\inf_{(\widehat{\mathbf{v}}, T_{th} ) } \E_{\pi(V_n)} [ d( \widehat{v}(T), v^* ) + h(T) ] & \le \E_{\pi(V_n)} [ 1 + h(T_{th}) ] \\
& = 1 + \sum_{t = 0}^\infty \p_{\pi(V_n)} ( h(T_{th} ) > t ) \\
& = 1 + \sum_{t = 0}^\infty \p_{\pi(V_n)} ( T_{th} > h^{-1} (t) ) \\
& \le 1 + h( F(a_2 \log n) ) + \sum_{t = h( F(a_2 \log n) ) + 1}^\infty e^{ - b_2 h^{-1}(t) }.
\end{align*}
We can reach the same conclusion as in Theorem \ref{thm:bayes}; that is, that 
$$
\limsup_{n \to \infty} \frac{ \inf_{(\widehat{\mathbf{v}}, T)} \E_{\pi(V_n)} [ d(\widehat{v}(t), v^*) + h(T) ] }{ h(F(a_2 \log n)) } \le 1,
$$
provided $\sum_{t \ge 0} e^{- b_2 h^{-1}(t) } < \infty$. This is the case provided $h^{-1}(t)$ increases faster than $\log t$ or equivalently, if $h(t)$ increases slower than any exponential function. Together, this shows that 
$$
h( F(a_1 \log n) ) \lesssim  \inf_{(\widehat{\mathbf{v}}, T)} \E_{\pi(V_n)} [ d(\widehat{v}(t), v^*) + h(T) ] \lesssim h(F(a_2 \log n)),
$$
provided $h$ increases slower than any exponential function. 
\end{remark}
}

\section{Analysis of the minimax setting}
\label{sec:minimax}

In this section we prove Theorem \ref{thm:minimax}. To do so, we prove lower and upper bounds for $\val_M^*(V_n, \alpha)$ in separate theorems. In Section \ref{subsec:minimax_lower_bound}, we prove Theorem \ref{thm:minimax_lower_bound}, which establishes a lower bound for $\val_M^*(V_n, \alpha)$. The upper bounds for $\val_M^*(V_n, \alpha)$ are achieved by MSPRTs of a particular design: this is proved in Theorem \ref{thm:trees} (regular trees) and in Theorem \ref{thm:lattices}, which can be found in Section \ref{subsec:minimax_upper_bound}. Remarkably, a simple design in which we set all the thresholds in the MSPRT to be the same value achieves the lower bound for regular trees. However, the same estimator adapted to lattices fails to achieve the lower bound due to key differences in the topology of lattices. To fix this issue, we consider a novel MSPRT design we term {\it $K$-level thresholds}, where we assign different thresholds to pairs of vertices $u,v$ satisfying $d(u,v) \le K$ and those satisfying $d(u,v) > K$. The performance of the resulting estimator does indeed achieve (up to a constant factor) the lower bound for $\val_M^*(V_n, \alpha)$. 

\subsection{Lower bounding the optimal value}
\label{subsec:minimax_lower_bound}

\begin{theorem}[Lower bound part of Theorem \ref{thm:minimax}]
\label{thm:minimax_lower_bound}
Suppose that the sequence of candidate sets $\{V_n \}_{n \ge 1}$ satisfies Assumption \ref{as:candidate_set}. If $G$ is a $k$-regular tree, 
\begin{equation}
\label{eq:minimax_lb_trees}
\liminf\limits_{n \to \infty} \frac{ \val_M^*(V_n, \alpha)}{\frac{ \log \log n}{\log (k - 1)}} \ge 1.
\end{equation}
On the other hand, if $G$ is a $\ell$-dimensional lattice then there is a constant $a_4$ depending on $\ell, Q_0, Q_1$ such that 
\begin{equation}
\label{eq:minimax_lb_lattices}
\liminf\limits_{n \to \infty} \frac{ \val_M^* (V_n, \alpha)}{ \left( \log n \right)^{\frac{1}{\ell + 1}}} \ge a_4.
\end{equation}
\end{theorem}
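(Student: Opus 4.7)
The plan is to reduce the minimax lower bound directly to the Bayesian lower bound already established in Theorem \ref{thm:bayes}. The key observation is that averaging the minimax constraint over the uniform prior $\pi(V_n)$ yields a Bayesian constraint: for any feasible pair $(T, \widehat{\mathbf{v}}) \in \Delta(V_n, \alpha)$,
\[
\E_{\pi(V_n)}[d(v^*, \widehat{v}(T))] = \frac{1}{n}\sum_{v \in V_n} \E_v[d(v, \widehat{v}(T))] \le \max_{v \in V_n} \E_v[d(v, \widehat{v}(T))] \le \alpha,
\]
and similarly $\E_{\pi(V_n)}[T] \le \max_{v \in V_n} \E_v[T]$.

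The next step is to combine this with Lemma \ref{lemma:optimal_estimator}: since the Bayes-optimal estimator $\widehat{\mathbf{v}}_B$ minimizes the expected distance for any fixed stopping time, we have $\val_B(T, \widehat{\mathbf{v}}_B) \le \val_B(T, \widehat{\mathbf{v}})$ for every estimator $\widehat{\mathbf{v}}$. Therefore, for any feasible $(T, \widehat{\mathbf{v}}) \in \Delta(V_n, \alpha)$,
\[
\val_B^*(V_n) \le \val_B(T, \widehat{\mathbf{v}}_B) \le \val_B(T, \widehat{\mathbf{v}}) = \E_{\pi(V_n)}[d(v^*, \widehat{v}(T))] + \E_{\pi(V_n)}[T] \le \alpha + \max_{v \in V_n} \E_v[T].
\]
Taking the infimum over $(T, \widehat{\mathbf{v}}) \in \Delta(V_n, \alpha)$ on the right-hand side gives the central reduction
\[
\val_M^*(V_n, \alpha) \ge \val_B^*(V_n) - \alpha.
\]

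Finally, I would invoke Theorem \ref{thm:bayes}. For $k$-regular trees, $\val_B^*(V_n) \sim \log\log n / \log(k-1)$, so since $\alpha$ is fixed in $n$, dividing by $\log\log n / \log(k-1)$ and taking the liminf yields \eqref{eq:minimax_lb_trees}. For $\ell$-dimensional lattices, Theorem \ref{thm:bayes} supplies a constant $a = a(\ell, Q_0, Q_1)$ with $\val_B^*(V_n) \ge a(\log n)^{1/(\ell+1)}$ for $n$ large, which after subtracting the constant $\alpha$ and dividing by $(\log n)^{1/(\ell+1)}$ yields \eqref{eq:minimax_lb_lattices} with $a_4 := a$.

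There is no real obstacle: the argument is a short averaging reduction, and the substantive work has already been done in Section \ref{sec:bayes} (specifically in Lemmas \ref{lemma:estimation_error_lower_bound} and \ref{lemma:optimal_estimator} feeding into Theorem \ref{thm:bayes}). The only subtlety worth flagging is the need to invoke Lemma \ref{lemma:optimal_estimator} rather than comparing against the feasible estimator's own value directly, which is what permits the clean bound $\val_B^*(V_n) \le \alpha + \max_v \E_v[T]$ uniformly over $\Delta(V_n, \alpha)$.
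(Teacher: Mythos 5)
Your proof is correct, and it is a genuinely cleaner route than the paper's own argument. The paper's formal proof of Theorem~\ref{thm:minimax_lower_bound} does \emph{not} invoke Theorem~\ref{thm:bayes} directly: it re-derives the lower bound from scratch by introducing the events $\cE_1 = \{T \le F(a\log n)\}$ and $\cE_2 = \{\min_{t \le F(a\log n)} \E_{\pi(t)}[d(v^*,\widehat v_B(t))] \ge A\}$, using the feasibility constraint $\max_v \E_v[d(v,\widehat v(T))] \le \alpha$ together with Lemma~\ref{lemma:optimal_estimator} to show $\p_{\pi(V_n)}(\cE_1\cap\cE_2) \le \alpha/A$, then combining with Lemma~\ref{lemma:estimation_error_lower_bound} to conclude $\p_{\pi(V_n)}(\cE_1^c)\to 1$, and finally bounding $\max_v\E_v[T] \ge \E_{\pi(V_n)}[T] \ge F(a\log n)\,\p_{\pi(V_n)}(\cE_1^c)$. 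Your proposal compresses all of that into the single inequality $\val_M^*(V_n,\alpha) \ge \val_B^*(V_n) - \alpha$, obtained by averaging the minimax constraints over $\pi(V_n)$ and invoking the already-proven Theorem~\ref{thm:bayes}. Both routes rest on the same engine (Lemma~\ref{lemma:estimation_error_lower_bound}), but yours reaches the conclusion via a pre-packaged theorem rather than repeating the event analysis; this is essentially the sentence-level reduction the paper sketches in its Section~1.2.2 proof summary but then does not carry out literally in Section~3.1. One small remark: you do not actually need Lemma~\ref{lemma:optimal_estimator} for the key chain, since $\val_B^*(V_n) \le \val_B(T,\widehat{\mathbf v})$ already holds by the definition of the infimum for any fixed $(T,\widehat{\mathbf v})\in\Delta(V_n,\alpha)$; the detour through $\widehat{\mathbf v}_B$ is harmless but superfluous.
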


\begin{proof}
Let $(T, \widehat{\mathbf{v}})$ be an estimator in $\Delta(V_n, \alpha)$. Given the prior $\pi(V_n)$, recall from Lemma \ref{lemma:optimal_estimator} that the estimator
$$
\widehat{v}_B(t) \in \argmin_{v \in V_n} \E_{\pi(t)} [ d(v^*, v) ]
$$
minimizes $\E_{\pi(V_n)} [ d(v^*, \widehat{v}(T) ) ]$ over all estimators $\widehat{\mathbf{v}}$, for {\it any} stopping time $T$. 

For given constants $a, A$, define the events 
\begin{align*}
\cE_1 & : = \{ T \le F(a \log n) \} \\
\cE_2 & : = \left \{ \min_{0 \le t \le F(a \log n) } \E_{\pi(t)} [ d(v^*, \widehat{v}_B(t) ) ] \ge A \right \}.
\end{align*}
We can then lower bound the worst-case estimation error as 
\begin{align}
 \max\limits_{v \in V_n} \E_v [ d(v, \widehat{v}(T) ) ] & \stackrel{(a)}{\ge}  \E_{\pi(V_n)} [ d(v^*, \widehat{v}(T) ] \nonumber \\
& \stackrel{(b)}{\ge} \E_{\pi(V_n)} \left[  d(v^*, \widehat{v}_B(T)) \right] \nonumber \\
& \stackrel{(c)}{\ge} \E_{\pi(V_n)} \left[ \E_{\pi(T)} [ d(v^*, \widehat{v}_B(T)) ] \mathbf{1}(\cE_1 \cap \cE_2 ) \right] \nonumber \\
\label{eq:minimax_estimation_lower_bound}
& \stackrel{(d)}{\ge} A \cdot \p_{\pi(V_n)} ( \cE_1 \cap \cE_2).
\end{align}
Above, $(a)$ follows since $\E_{\pi(V_n)}$ is an average over the collection of operators $\{\E_v \}_{v \in V_n}$ and the maximum is greater than the average; $(b)$ is due to the optimality of $\widehat{\mathbf{v}}_B$, proved in Lemma \ref{lemma:optimal_estimator}; inequality $(c)$ is due to the tower rule and inequality $(d)$ follows since $\E_{\pi(T)} [ d(v^*, \widehat{v}_B(T)) ] \ge A$ on the event $\cE_1 \cap \cE_2$. 

Noting that $\max_{v \in V_n} \E_{\pi(V_n)} [ d(v^*, \widehat{v}(T)) ] \le \alpha$ for $(T, \widehat{\mathbf{v}}) \in \Delta(V_n, \alpha)$, \eqref{eq:minimax_estimation_lower_bound} implies
$$
\p_{\pi(V_n)} (\cE_1 \cap \cE_2) \le \frac{\alpha}{A}, 
$$
which in turn implies that 
\begin{align}
\p_{\pi(V_n)} ( \cE_1^c) & \ge \p_{\pi(V_n)} ( \cE_1^c \cup \cE_2^c) - \p_{\pi(V_n)} ( \cE_2^c) \nonumber \\
\label{eq:E1c_lower_bound}
& \ge 1 - \frac{\alpha}{A} - \p_{\pi(V_n)} ( \cE_2^c).
\end{align}
By Lemma \ref{lemma:estimation_error_lower_bound}, if we set $(a, A) = (a_1, b_1 \log n)$ if $G$ is a $k$-regular tree and $(a,A) = (a_1', b_1' n^{1/\ell})$ if $G$ is a $\ell$-dimensional lattice (see Lemma \ref{lemma:estimation_error_lower_bound} for definitions of these constants) then the right hand side of \eqref{eq:E1c_lower_bound} tends to 1 as $n \to \infty$. We can then lower bound the worst-case expected runtime as 
\begin{align}
\max_{v \in V_n} \E_v [ T ] & \ge \E_{\pi(V_n)} [ T ] \ge \E_{\pi(V_n)} [ T \mathbf{1}(\cE_1^c) ] \nonumber \\
\label{eq:general_lower_bound}
& \ge F(a \log n) \p_{\pi(V_n)} ( \cE_1^c). 
\end{align}
Since \eqref{eq:general_lower_bound} holds for {\it any} element of $\Delta(V_n, \alpha)$, we have
$$
\val_M^*(V_n,\alpha) \ge F(a \log n) \p_{\pi(V_n)} (\cE_1^c). 
$$
Dividing by $F( a\log n)$ and sending $n \to \infty$, we arrive at
$$
\liminf\limits_{n \to \infty} \frac{ \val_M^*(V_n, \alpha)}{F(a \log n)} \ge 1.
$$
The desired result follows from the asymptotic behavior of $F$ (see \eqref{eq:F_asymptotics}). 
\end{proof}

\subsection{Achieving the lower bound with MSPRTs}
\label{subsec:minimax_upper_bound}

An important question is whether the lower bound in Theorem \ref{thm:minimax_lower_bound} is achievable. To answer this question, we focus on a class of sequential estimation procedures called {\it multi-hypothesis sequential probability ratio tests} (MSPRTs). For convenience, we define them formally below. 

Given vertices $u,v \in V$, define the log-likelihood ratio 
$$
Z_{vu}(t) : = \log \frac{d\p_v}{d \p_u} ({y}(0), \ldots, {y}(t)).
$$
In our model, the likelihood of observing the sequence of public signals $\mathbf{y}(0), \ldots, \mathbf{y}(t)$ under the measure $\p_v$ has the form
\begin{equation}
\label{eq:public_signal_likelihood}
d\p_v ( {y}(0), \ldots, {y}(t)) = \prod\limits_{s = 0}^t \left( \left[ \prod\limits_{w \in \cN_v(s)} dQ_1( y_w(s)) \right] \left[ \prod\limits_{w \notin  \cN_v(s)} dQ_0 ( y_w(s)) \right] \right).
\end{equation}
Above, the product containing terms of the form $dQ_1(y_w(s))$ computes the likelihood of nodes that have been affected by the cascade, and the product containing the terms of the form $dQ_0(y_w(s))$ computes the likelihood of nodes that have not yet been affected by the cascade. In light of \eqref{eq:public_signal_likelihood}, $Z_{vu}(t)$ can be written as

\begin{align*}
Z_{vu}(t) & =  \sum\limits_{s = 0}^t  \log \frac{ \left( \prod\limits_{w \in \cN_v(s)} dQ_1(y_w(s)) \right) \left( \prod\limits_{w \notin \cN_v(s)} dQ_0(y_w(s)) \right) }{ \left( \prod\limits_{w \in \cN_u(s)} dQ_1(y_w(s)) \right) \left( \prod\limits_{w \notin \cN_u(s)} dQ_0(y_w(s)) \right) } \\
& = \sum\limits_{s = 0}^t \left( \sum\limits_{w \in \cN_v(s) \setminus \cN_u(s)} \log \frac{dQ_1}{dQ_0}(y_w(s)) - \sum\limits_{w \in \cN_u(s) \setminus \cN_v(s)} \log \frac{dQ_1}{dQ_0}(y_w(s)) \right).
\end{align*}

In words, the log-likelihood ratio $\log dQ_1 / dQ_0$ measures how likely it is that a certain public signal came from the distribution $Q_1$ as opposed to the distribution $Q_0$. It follows that summations of the form $\sum_{w \in S} \log \frac{dQ_1}{dQ_0}(y_w(s))$ measures the {\it net likelihood} that the public signals in $S$ are caused by the cascade. Hence if it is more likely that the public signals in $\cN_v(s) \setminus \cN_u(s)$ are caused by the cascade rather than the public signals in $\cN_u(s) \setminus \cN_v(s)$, the log-likelihood ratio $Z_{vu}(t)$ positive; else it is negative. 

We are now ready to define the MSPRT.

\begin{definition}[Multi-hypothesis sequential probability ratio test]
Fix a positive integer $n$ and specify a threshold function $\tau: V_n \times V_n \to (0,\infty)$. Consider the stopping time
$$
T(v) : = \min \{t \ge 0: Z_{vu}(t) \ge  \log \tau(v,u), \forall u \in V_n \setminus \{v\} \}.
$$
The corresponding MSPRT $(T, \widehat{\mathbf{v}})$ is defined via 
$$
T : = \min_{v \in V_n} T(v) \qquad  and \qquad \widehat{v}(T) : = \argmin_{v \in V_n} T(v).
$$
\end{definition}
In words, the output of the MSPRT is the first vertex $v$ for which all log-likelihood ratios $Z_{vu}(t)$ pass the thresholds $\log \tau(v,u)$ for all $u \in V_n \setminus \{v \}$. If $(T, \widehat{\mathbf{v}})$ is a MSPRT with threshold function $\tau$, we have the following useful relation, which is due to Tartakovsky \cite[Theorem 3.1]{Tartakovsky1998}.

\begin{lemma}
\label{lemma:msprt_prob_bound}
For any distinct $u,v \in V_n$, 
$$
\p_v ( \widehat{v}(T) = u) \le \frac{1}{\tau(u,v)}
$$
\end{lemma}

Since the proof is short, we provide it here for completeness. 

\begin{proof}[Proof of Lemma \ref{lemma:msprt_prob_bound}]
\begin{align*}
\p_v ( \widehat{v}(T) = u) & = \E_v \left[ \mathbf{1}( \widehat{v}(T) = u) \right] \\
& = \E_u \left[ \mathbf{1}(\widehat{v}(T) = u) e^{- Z_{uv}(T)} \right] \\
& \le e^{ - \log \tau(u,v) } \E_u \left[ \mathbf{1}(\widehat{v}(T) = u) \right] \\
& \le \frac{1}{\tau(u,v)}.
\end{align*}
Above, the equality in the second line follows from $e^{-Z_{uv}(T)} = \frac{d\p_v}{d\p_u}( y(0), \ldots, y(T))$. The inequality in the third line follows since $Z_{uv}(T) \ge \tau(u,v)$ on the event $\{\widehat{v}(T) = u\}$ by the definition of a MSPRT. 
\end{proof}

Using Lemma \ref{lemma:msprt_prob_bound}, we can bound the worst-case estimation error of the MSPRT $(T, \widehat{\mathbf{v}})$ as 
\begin{align*}
\max\limits_{v \in V_n} \E_v [ d(v,\widehat{v}(T))] & = \max\limits_{v \in V_n} \sum\limits_{v \in V_n} d(u,v) \p_v ( \widehat{v}(T) = u) \\
& \le \max\limits_{v \in V_n} \sum\limits_{v \in V_n} \frac{d(u,v)}{\tau(u,v)}.
\end{align*}
To ensure that $(T, \widehat{\mathbf{v}}) \in \Delta(V_n,\alpha)$, it suffices to check that 
\begin{equation}
\label{eq:msprt_error}
\max\limits_{v \in V_n} \sum_{u \in V_n} \frac{d(u,v)}{\tau(u,v)} \le \alpha.
\end{equation}
Perhaps the simplest weight design which satisfies \eqref{eq:msprt_error} is a {\it uniform weights design}, in which the $\tau(v,u)$'s all take on the same value. 

\begin{definition}[Uniform weights design]
The MSPRT $(T_{n,\alpha}, \widehat{\mathbf{v}}_{n,\alpha})$ is designed with uniform weights if
\begin{equation}
\label{eq:uniform_weights}
\tau(v,u) := \frac{n^2}{\alpha}, \qquad \forall u,v \in V_n : u \neq v.
\end{equation}
\end{definition}
With the uniform weights design, we have 
$$
 \max\limits_{v \in V_n} \sum\limits_{u \in V_n} d(u,v) \frac{\alpha}{n^2} \le \max\limits_{v \in V_n} \sum\limits_{u \in V_n} \frac{\alpha}{n} = \alpha.
$$ 
Hence $(T_{n,\alpha}, \widehat{\mathbf{v}}_{n,\alpha}) \in \Delta(V_n, \alpha)$. Note that in the first equality above, we have used the (loose) bound $\max_{u,v \in V_n} d(v,u) \le n$. 

When $G$ is a regular tree, the following result shows that the performance of the MSPRT with uniform weights matches the lower bound in Theorem \ref{thm:minimax_lower_bound}. 

\begin{theorem}
\label{thm:trees}
Let $G$ be a $k$-regular tree with $k \ge 3$ and fix $\alpha > 0$. If $(T_{n,\alpha}, \widehat{\mathbf{v}}_{n,\alpha})$ is the MSPRT with uniform weights,
$$
\limsup\limits_{n \to \infty} \frac{ \max_{v \in V_n} \E_v [ T_{n,\alpha} ] }{ \frac{\log \log n}{\log (k - 1)} } \le 1.
$$
\end{theorem}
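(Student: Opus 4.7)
The plan is to bound the worst-case expected runtime by using the trivial inequality $T_{n,\alpha} = \min_{w \in V_n} T(w) \le T(v)$, which reduces the task to showing $\max_{v \in V_n} \E_v[T(v)] \le (1+o(1))\log\log n/\log(k-1)$. By definition of $T(v)$, the event $\{T(v) > t\}$ is contained in $\bigcup_{u \in V_n \setminus \{v\}} \{Z_{vu}(t) < \log(n^2/\alpha)\}$. Thus, fixing $\epsilon > 0$ and setting $t^* := \lceil (1+\epsilon)\log\log n / \log(k-1) \rceil$, the strategy is to combine a drift estimate on $Z_{vu}(t^*)$ with a concentration inequality, followed by a union bound over the $n-1$ alternative hypotheses, to show that $\p_v(T(v) > t^*) \to 0$ fast enough that $\E_v[T(v)] = t^* + o(1)$ uniformly in $v$.

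The first step is to estimate the drift of $Z_{vu}(t)$ under $\p_v$. Using the formula derived in the excerpt,
$$
\E_v[Z_{vu}(t)] = D(Q_1 \| Q_0) \sum_{s=0}^t |\cN_v(s) \setminus \cN_u(s)| + D(Q_0 \| Q_1) \sum_{s=0}^t |\cN_u(s) \setminus \cN_v(s)|.
$$
The key combinatorial fact for trees is the bound $|\cN_v(s) \setminus \cN_u(s)| \ge (k-1)^s$, valid for any $u \ne v$ and any $s \ge 1$: at most one of the $k$ neighbors of $v$ lies on the geodesic toward $u$, and each of the other $k-1$ neighbors roots a subtree containing $(k-1)^{s-1}$ vertices at distance exactly $s$ from $v$; any such vertex $w$ satisfies $d(w,u) = s + d(u,v) > s$ and hence lies in $\cN_v(s) \setminus \cN_u(s)$. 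This yields $\E_v[Z_{vu}(t)] \ge c(k-1)^t$ for a constant $c = c(k, Q_0, Q_1) > 0$. At $t = t^*$ the mean is at least $c(\log n)^{1+\epsilon}$, which dominates the threshold $\log(n^2/\alpha) \asymp \log n$ for all large $n$.

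The main analytical obstacle is a concentration inequality for $Z_{vu}(t^*)$ strong enough to absorb the $n - 1$ alternative hypotheses in the union bound. Because $Z_{vu}(t^*)$ is a sum of $O((k-1)^{t^*}) = O((\log n)^{1+\epsilon})$ independent centered log-likelihood-ratio terms (plus its mean), Bernstein's inequality applied under $\p_v$ --- or, if $\log (dQ_1/dQ_0)$ is unbounded, a standard truncation argument followed by Hoeffding --- should yield
$$
\p_v\!\left( Z_{vu}(t^*) \le \tfrac{1}{2} \E_v[Z_{vu}(t^*)] \right) \le \exp\!\left( -c'(\log n)^{1+\epsilon} \right)
$$
for some $c' = c'(k, Q_0, Q_1) > 0$. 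Since $\tfrac{1}{2}\E_v[Z_{vu}(t^*)] \ge \log(n^2/\alpha)$ for $n$ large, a union bound over $u \in V_n \setminus \{v\}$ gives $\p_v(T(v) > t^*) \le n \exp(-c'(\log n)^{1+\epsilon}) \to 0$. The same argument with $t$ in place of $t^*$ yields $\p_v(T(v) > t) \le n \exp(-c'(k-1)^t)$ for all $t \ge t^*$, and therefore $\E_v[T(v)] \le t^* + \sum_{t > t^*} \p_v(T(v) > t) = t^* + o(1)$ uniformly in $v$. Letting $\epsilon \downarrow 0$ yields the claim. The most delicate step is the concentration bound: it succeeds only because the exponential growth of $\E_v[Z_{vu}(t)]$ delivers super-polynomial tail decay, providing the necessary slack against the polynomial factor of $n$ in the union bound.
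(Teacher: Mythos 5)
Your proposal follows the same architecture as the paper's proof: reduce to bounding $\E_v[T(v)]$, union bound over the $n-1$ alternatives, estimate the drift of $Z_{vu}(t)$ via the combinatorics of $|\cN_v(s) \setminus \cN_u(s)| \ge (k-1)^s$, and apply a concentration inequality. Your drift estimate and the combinatorial argument for trees are both correct, and your bookkeeping with the $(1+\epsilon)$ slack and the tail sum is sound.

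The one genuine gap is the concentration step, and it is exactly the step the paper handles with a bespoke lemma (Lemma \ref{lemma:llr_Z}, reusing Lemma \ref{lemma:llr_chernoff}). You invoke Bernstein's inequality ``or, if $\log(dQ_1/dQ_0)$ is unbounded, a standard truncation argument followed by Hoeffding,'' but neither is justified under the paper's hypotheses. The only standing assumption on $Q_0, Q_1$ is mutual absolute continuity, which does not give bounded, sub-Gaussian, or even sub-exponential tails for $\log(dQ_1/dQ_0)$ in both directions. In particular Bernstein's inequality in its textbook form is not available, and a truncation argument would require controlling the truncation error by some higher-moment condition you haven't assumed. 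What \emph{is} available, and what the paper uses, is a one-sided Chernoff bound: under $\p_v$, the random variable $-Z_{vu}(t)$ has MGF finite on $\lambda \in [0,1]$, because
$$
\E_{\cA \sim Q_1}\!\left[\tfrac{dQ_0}{dQ_1}(\cA)\right] = 1 \quad\text{and}\quad \E_{\cB \sim Q_0}\!\left[\tfrac{dQ_1}{dQ_0}(\cB)\right] = 1,
$$
which are automatic from absolute continuity. Lemma \ref{lemma:llr_chernoff} then shows the resulting rate function $I(x)$ is strictly positive for $x > 0$ by a Lebesgue DCT argument establishing differentiability of the log-MGF at $\lambda = 0$; this replaces the variance control you'd need for Bernstein. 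If you swap your Bernstein/truncation step for this Chernoff bound, the rest of your argument goes through verbatim, and in fact your $t^*$ with explicit $\epsilon$ slack is essentially the same as the paper's $t_n = F_1(\log(n^2/\alpha)/C(Q_0,Q_1))$ with the $\epsilon$ absorbed into the constant $C(Q_0,Q_1)$.
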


Combined with Theorem \ref{thm:minimax_lower_bound}, Theorem \ref{thm:minimax} for regular trees follows as an immediate consequence. 

\begin{proof}[Proof of Theorem \ref{thm:minimax} for regular trees]
We have the series of inequalities
\begin{align*}
1 & \stackrel{(a)}{\le} \liminf\limits_{n \to \infty} \frac{\val_M^*(V_n, \alpha)}{ \frac{\log \log n}{\log (k - 1)}} \\
& \le \limsup\limits_{n \to \infty} \frac{\val_M^*(V_n, \alpha)}{\frac{\log \log n}{\log (k - 1)}} \\ & \stackrel{(b)}{\le} \limsup\limits_{n \to \infty} \frac{\max_{v \in V_n} \E_v [ T_{n, \alpha}] }{ \frac{\log \log n}{\log (k - 1)}} \\
& \stackrel{(c)}{\le} 1.
\end{align*}
Above, $(a)$ is due to Theorem \ref{thm:minimax_lower_bound}, $(b)$ follows since $(T_{n,\alpha}, \widehat{\mathbf{v}}_{n,\alpha}) \in \Delta(V_n, \alpha)$ and $(c)$ is due to Theorem \ref{thm:trees}. Since both the start and the end of the chain of inequalities is 1, the inequalities are all {\it equality}. Hence the following limit is well-defined:
$$
\lim\limits_{n \to \infty} \frac{\val_M^*(V_n, \alpha)}{\frac{\log \log n}{\log (k -1)}} = 1,
$$
which proves the desired result. 
\end{proof}

We provide a brief sketch of the proof of Theorem \ref{thm:trees}, and defer the details to Section \ref{subsec:proof_msprt_trees}. Suppose that $v \in V_n$ is the true source. When $t$ is sufficiently large, we show that it holds for all $u \in V_n \setminus \{v \}$ that 
$$
Z_{vu}(t) \sim \E_v [ Z_{vu}(t) ] \asymp \sum_{s = 0}^t  | \cN_v(s) \setminus \cN_u(s) | \asymp (k- 1)^t.
$$
This in particular implies that once $t \gtrsim \frac{\log \log (n^2/\alpha)}{\log (k- 1)}$, {\it all} the log-likelihood ratios $\{Z_{vu}(t) \}_{u \in V_n \setminus \{u \}}$ will cross the threshold $\log n^2/\alpha$. Since $\log \log (n^2/\alpha) \sim \log \log n$ when $n$ is large, Theorem \ref{thm:trees} follows. The key technical ingredient of this proof is a large-deviations-type inequality for $Z_{vu}(t)$. 

Unfortunately, the MSPRT with a uniform weights design is {\it not} optimal in lattices. Suppose that the dimension of the lattice is $\ell$. For $u,v \in V_n$ which are far apart, we have $Z_{vu}(t) \asymp t^{\ell + 1}$, but for $u,v$ which are relatively close together, $Z_{vu}(t) \asymp t^\ell$. As the log-likelihood $Z_{vu}(t)$ grows at a slower rate in this latter case, this ends up being the primary contributor to the behavior of the stopping time. As a result, we obtain an upper bound for $T^*(V_n, \alpha)$ of order $(\log n)^{1/\ell}$, whereas the lower bound established in Theorem \ref{thm:minimax_lower_bound} is of order $(\log n)^{1/(\ell + 1)}$. To close this gap, we shall consider a design for the MSPRT weights which places different thresholds for pairs of vertices that are close and pairs that are far. 

\begin{definition}[$K$-level weights]
\label{def:K_level}
Let $K$ be a non-negative integer. The MSPRT $(T^K_{n, \alpha}, \widehat{\mathbf{v}}_{n,\alpha}^K)$ is designed with $K$-level weights if 
$$
\tau(v,u) := \begin{cases}
\frac{2K | \cN(K) |}{\alpha} & 0 < d(u,v) \le K \\
\frac{2n^2}{\alpha} & \text{ else.}
\end{cases}
$$
\end{definition}

It is straightforward to show that the $K$-level weights satisfy \eqref{eq:msprt_error}:
\begin{align*}
\max\limits_{v \in V_n}  \sum\limits_{u \in V_n} \frac{d(u,v)}{\tau(u,v)}  & = \max\limits_{v \in V_n} \left( \sum\limits_{u \in V_n \cap \cN_v(K)} \frac{d(u,v)}{\tau(u,v)}  + \sum\limits_{u \in V_n \setminus \cN_v(K) } \frac{d(u,v)}{\tau(u,v)} \right) \\
& \le \max\limits_{v \in V_n} \left( \sum\limits_{u \in V_n \cap \cN_v(K)} \frac{\alpha}{2 | \cN(K)|} + \sum\limits_{u \in V_n \setminus \cN_v(K) } \frac{\alpha}{2 n} \right) \\
& \le \alpha,
\end{align*}
where to derive the inequality on the second line, we used $d(u,v) \le K$ for $u \in \cN_v(K)$ and $d(u,v) \le n$ for $u \in V_n$. Putting everything together, we have shown that $(T_{n, \alpha}^K, \widehat{\mathbf{v}}_{n,\alpha}^K) \in \Delta(V_n, \alpha)$. 

At a high level, the MSPRT designed with $K$-level weights can be thought of as a {\it multi-scale} source estimation algorithm. When the $Z_{vu}(t)$'s are large for many vertices $u$ far from $v$ (specifically, $d(u,v) > K$), this indicates that $v$ must be relatively close to the source. Assuming $K$ is not too large, there are roughly $n$ vertices far from $v$, which means that the threshold for the log-likelihood ratios {\it must} be on the order of $\log n$ in order to reliably narrow down the general location of the source. Simultaneously, the $Z_{vu}(t)$'s for $u$ close to $v$ (specifically, $d(u,v) \le K$) provide {\it fine-grained} information about the location of the source within the localized region $\cN_v(K)$. To compensate for the slower growth of $Z_{vu}(t)$ in this case, we require that they pass the much smaller threshold of $\log ( 2K | \cN(K) |/ \alpha)$, hence removing the bottleneck found in the uniform weights design. 

In the following result, we show that for the right value of $K$, the MSPRT designed with $K$-level weights is {\it orderwise optimal} in lattices. 

\begin{theorem}
\label{thm:lattices}
Let $G$ be an $\ell$-dimensional lattice and set $K = (\log n)^{1/\ell}$. Then there is a constant $b_4$ depending only on $\ell, Q_0, Q_1$ such that 
$$
 \limsup\limits_{n \to \infty} \frac{ \max_{v \in V_n} \E_v [ T^K_{n, \alpha} ] }{ \left(  \log n \right)^{\frac{1}{\ell + 1}} } \le b_4,
$$
where $a_4$ is the constant used in \eqref{eq:minimax_lb_lattices}. 
\end{theorem}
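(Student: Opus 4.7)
I plan to mirror the structure of the proof of Theorem \ref{thm:trees} sketched in Section \ref{subsec:minimax_upper_bound}: under $\p_v$ with $v$ the true source, identify a time $t^\star$ by which every log-likelihood ratio $Z_{vu}(t^\star)$ has, with high probability, crossed its assigned threshold $\log \tau(v,u)$, and then convert the resulting tail bound on $T(v)$ into a bound on $\E_v[T(v)]$. Since $T_{n,\alpha}^K = \min_{v \in V_n} T(v) \le T(v)$ under $\p_v$, this suffices to bound $\max_v \E_v[T_{n,\alpha}^K]$ from above. The content relative to the tree case is that the $K$-level design forces us to match \emph{two} different threshold scales to two different growth regimes of $\E_v[Z_{vu}(t)]$ on the lattice.

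First, I would start from the identity
\[
\E_v[Z_{vu}(t)] = D(Q_1 \| Q_0) \sum_{s=0}^t |\cN_v(s)\setminus\cN_u(s)| + D(Q_0 \| Q_1) \sum_{s=0}^t |\cN_u(s)\setminus\cN_v(s)|,
\]
and invoke combinatorial estimates for symmetric differences of $\ell_1$-balls (of the kind developed in Appendix \ref{sec:geodesics}) to show, up to constants depending on $\ell, Q_0, Q_1$,
\[
\E_v[Z_{vu}(t)] \asymp \begin{cases} t^{\ell+1}, & d(u,v) > t, \\ d(u,v)\,t^\ell, & d(u,v) \le t. \end{cases}
\]
The first case uses that the balls $\cN_v(s), \cN_u(s)$ are disjoint for all $s \le t$, so the symmetric difference has full size $\asymp s^\ell$; the second uses a boundary-strip estimate $|\cN_v(s)\triangle \cN_u(s)| \asymp d(u,v)\, s^{\ell-1}$ for $s \ge d(u,v)$. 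Setting $t^\star := C(\log n)^{1/(\ell+1)}$ with $C$ a sufficiently large constant, a short case analysis then verifies $\E_v[Z_{vu}(t^\star)] \ge 2\log \tau(v,u)$ for every $u \neq v$: for far pairs ($d(u,v)>K$, threshold $\asymp \log n$), the binding regime is $d(u,v)>t^\star$, which gives $(t^\star)^{\ell+1}\asymp \log n$; for close pairs ($d(u,v)\le K$, threshold $\log(2K|\cN(K)|/\alpha) \asymp \log\log n$ since $|\cN(K)| \asymp K^\ell = \log n$), the estimate $d(u,v)(t^\star)^\ell \ge (t^\star)^\ell \asymp (\log n)^{\ell/(\ell+1)}$ easily dominates $\log\log n$. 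This calibration is exactly where the two-level design pays off: a uniform threshold $\asymp \log n$ for close vertices would instead demand $t \gtrsim (\log n)^{1/\ell}$ from pairs with $d(u,v)=1$, which is too slow.

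Next, since $Z_{vu}(t)$ is a sum of independent terms of the form $\pm\log \frac{dQ_1}{dQ_0}(y_w(s))$ over $w \in \cN_v(s) \triangle \cN_u(s)$ and $s \le t$, I would establish a Cram\'er/Chernoff bound
\[
\p_v\!\left(Z_{vu}(t) \le \frac{1}{2}\E_v[Z_{vu}(t)]\right) \le \exp\!\left(-c\,\E_v[Z_{vu}(t)]\right)
\]
for a constant $c = c(Q_0,Q_1) > 0$; this is the lattice analogue of the large-deviation step announced for Theorem \ref{thm:trees}. Combined with the mean estimate, this gives $\p_v(Z_{vu}(t^\star) < \log \tau(v,u)) \le \tau(v,u)^{-cC'}$ for some $C' > 0$ growing with $C$. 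A union bound separated into the two distance regimes then yields
\[
\p_v(T(v) > t^\star) \le |\cN(K)| \left(\frac{2K|\cN(K)|}{\alpha}\right)^{-cC'} + n \left(\frac{2n^2}{\alpha}\right)^{-cC'} \longrightarrow 0,
\]
once $C$ is large enough that $cC' > 1$. To upgrade this tail bound to the expectation bound $\E_v[T(v)] \le t^\star(1+o(1))$, I would write $\E_v[T(v)] \le t^\star + \sum_{t>t^\star}\p_v(T(v)>t)$; re-running Step~2 at time $t \ge t^\star$ produces a summand decaying like $\exp(-c'' t^{\ell+1})$, so the tail contributes $O(1)$, giving the claimed $b_4 = C$.

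The main obstacle I anticipate is the lower estimate $|\cN_v(s)\triangle\cN_u(s)| \gtrsim d(u,v)\, s^{\ell-1}$ for $s \gg d(u,v)$, which is geometrically the least obvious ingredient and has to be uniform across $u,v$ with $d(u,v)$ ranging from $1$ up to order $n^{1/\ell}$. Once this symmetric-difference estimate is in hand, the concentration and union-bound steps are essentially routine adaptations of the corresponding arguments for regular trees.
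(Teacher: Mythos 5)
Your overall strategy—calibrate a stopping time $t^\star$ at which every log-likelihood ratio has crossed its two-level threshold, apply a Chernoff bound, union-bound over the two distance regimes, and integrate the tail—is exactly the paper's strategy. However, the geometric lemma you flag as ``the main obstacle'' is not actually needed, and the paper avoids it entirely. You propose to prove a two-regime estimate for $\E_v[Z_{vu}(t)]$ whose close-pair branch relies on the lower bound $|\cN_v(s)\triangle\cN_u(s)| \gtrsim d(u,v)\,s^{\ell-1}$, uniformly over all close pairs. The paper never proves or uses such a bound. Instead, for close pairs it uses only the trivial observation $f_{vu}(t) \ge f_1(t) \ge t+1$ (Lemma \ref{lemma:f1_linear_bound}), which suffices because the close-pair threshold is only of order $\log\log n$ once one notes $|\cN(K)| \asymp K^\ell = \log n$, so the close-pair calibration time $t_{n,1} \le F_1(\cdot) \le \log(\cdot) = O(\log\log n)$ (Lemma \ref{lemma:F1_linear_bound}) is dominated and does not need a sharp lower bound on $f_{vu}$. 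For far pairs, the paper uses the exact identity $f_{vu}(t) = f(t)$ whenever $t < d(u,v)/2$ (Lemma \ref{lemma:F_equivalence}): since $d(u,v) > K = (\log n)^{1/\ell}$ implies $f(d(u,v)/2) \asymp (\log n)^{(\ell+1)/\ell} \gg \log n$, one gets $t_{n,2} = F(\log(2n^2/\alpha)/C) \asymp (\log n)^{1/(\ell+1)}$ directly from the single-ball asymptotics $f(t) \asymp t^{\ell+1}$ of Lemma \ref{lemma:lattices_exact}. So the only geometric input the paper needs is the growth of a single $\ell_1$-ball; no uniform symmetric-difference lower bound is required. If you pursue your version, also note that your case split should really be $d(u,v) > 2t$ vs.\ $d(u,v) \le 2t$ for the balls to be genuinely disjoint, and the $d(u,v) \le 2t$ branch (even for far pairs once $t \ge K/2$) would need additional care—another complication the paper's identity-based argument sidesteps.
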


Combined with Theorem \ref{thm:minimax_lower_bound}, Theorem \ref{thm:minimax} for lattices immediately follows. 

\begin{proof}[Proof of Theorem \ref{thm:minimax} for lattices]
Let $G$ be the $\ell$-dimensional lattice. We have the series of inequalities 
\begin{align*}
a_4 & \stackrel{(a)}{\le} \liminf\limits_{n \to \infty} \frac{ \val_M^*(V_n, \alpha)}{ (\log n)^{\frac{1}{\ell + 1}}} \\
& \le \limsup\limits_{n \to \infty} \frac{\val_M^*(V_n, \alpha)}{ (\log n)^{\frac{1}{\ell + 1}} } \\ & \stackrel{(b)}{\le} \limsup\limits_{n \to \infty} \frac{\max_{v \in V_n} \E_v [ T_{n, \alpha}^K ] }{ (\log n)^{\frac{1}{\ell + 1}} } \\
& \stackrel{(c)}{\le} b_4.
\end{align*}
Above, $(a)$ is due to Theorem \ref{thm:minimax_lower_bound}, $(b)$ follows since $(T_{n, \alpha}^K, \widehat{\mathbf{v}}_{n, \alpha}^K ) \in \Delta(V_n, \alpha)$, and $(c)$ is due to Theorem \ref{thm:lattices}. In particular, the chain of inequalities implies that for $n$ sufficiently large, 
\begin{align*}
\frac{a_4}{2} (\log n)^{\frac{1}{\ell + 1}} & \le \val_M^*(V_n, \alpha) \\
& \le \max\limits_{v \in V_n}  \E_v [ T_{n,\alpha}^K ] \\
& \le 2b_4 (\log n)^{\frac{1}{\ell + 1}}.
\end{align*}
The theorem follows from setting $a' : = a_4 / 2$ and $b' : = 2 b_4$. 
\end{proof}

The proof of Theorem \ref{thm:lattices} follows similar reasoning as the proof of Theorem \ref{thm:trees}. In the case that $v$ is the true source of the cascade, we separately consider the performance of $Z_{vu}(t)$ for $u \in \cN_v(K)$ and $u \in V_n \setminus \cN_v(K)$ and use large-deviations-type results to characterize when the log-likelihood ratios cross the thresholds specified by the $K$-level weights design. For full details, see Section \ref{subsec:proof_msprt_lattices}. 

\begin{remark}[MSPRTs for general topologies]
\label{remark:general_msprt}
The $K$-level weights introduced in Definition \ref{def:K_level} can be generalized to arbitrary topologies if we set 
$$
\tau(v,u) : = \begin{cases}
\frac{2K | \cN_v(K) | }{\alpha} & 0 < d(v,u) \le K ;\\
\frac{2n^2}{\alpha} & \text{ else}.
\end{cases}
$$
Above, we write $| \cN_v(K) |$ instead of $|\cN(K)|$ since the size of the $K$-hop neighborhood of a vertex $v$ can depend strongly on $v$ in general topologies. Since the proof of Theorem \ref{thm:lattices} is quite generic for the most part, we expect that a MSPRT with $K$-level weights can achieve the lower bound for $\val_M^*(V_n, \alpha)$ in general. The choice of $K$, however, will depend on the topology of interest.
\end{remark}

\section{Analysis of the Bayesian estimation error: Proof of Lemmas \ref{lemma:estimation_error_lower_bound} and \ref{lemma:estimation_error_upper_bound}}
\label{sec:estimation_error}
\subsection{Preliminary results: Properties of the posterior distribution}

Before proving Lemmas \ref{lemma:estimation_error_lower_bound} and \ref{lemma:estimation_error_upper_bound}, we introduce some simple supporting results. Recall that the posterior distribution is given by $\pi(t) = \{ \pi_v(t) \}_{v \in V_n}$, where 
$$
\pi_v(t) : = \p_{\pi(V_n)} \left( v^* = v \mid y(0),\ldots, y(t) \right).
$$
From Bayes' formula, it holds for any distinct $u,v \in V_n$ that 
\begin{equation}
\label{eq:pi_recursion}
\frac{\pi_v(t)}{\pi_u(t)} = \frac{\pi_v(t - 1)}{\pi_u(t - 1)} \cdot \frac{d\p_v(y(t))}{d\p_u(y(t))}. 
\end{equation}
Recall that under $\p_v$, $y_w(t) \sim Q_1$ if $w \in \cN_v(t)$, else $y_w(t) \sim Q_0$. Since the public signals $y(0), \ldots, y(t)$ are independent conditioned on the source, the likelihood ratio $d\p_v(y(t))/ d\p_u(y(t))$ can be written as 
\begin{align}
\frac{d\p_v(y(t))}{d \p_u(y(t))} & =  \frac{ \left( \prod\limits_{w \in \cN_v(t)} dQ_1(y_w(t)) \right) \left( \prod\limits_{w \notin \cN_v(t)} dQ_0(y_w(t)) \right)}{ \left( \prod\limits_{w \in \cN_u(t)} dQ_1(y_w(t)) \right) \left( \prod\limits_{w \notin \cN_u(t)} dQ_0 (y_w(t)) \right) } \nonumber \\
\label{eq:lr_decomposition}
& = \frac{ \prod_{w \in \cN_v(t) \setminus \cN_u(t)} \frac{dQ_1}{dQ_0}(y_w(t)) }{ \prod_{w \in \cN_u(t) \setminus \cN_v(t)} \frac{dQ_1}{dQ_0}(y_w(t)) }
\end{align}
Combining \eqref{eq:pi_recursion} and \eqref{eq:lr_decomposition}, we have
\begin{align}
\label{eq:ratio_pi_v1}
\frac{\pi_v(t)}{\pi_u(t)} & = \frac{ \prod_{s = 0}^t \prod_{w \in \cN_v(s) \setminus \cN_u(s)} \frac{dQ_1}{dQ_0}(y_w(s)) }{ \prod_{s = 0}^t  \prod_{w \in \cN_u(s) \setminus \cN_v(s)} \frac{dQ_1}{dQ_0}(y_w(s)) } \\
\label{eq:ratio_pi_v2}
& = \frac{ \prod_{s = 0}^t \prod_{w \in \cN_v(s) } \frac{dQ_1}{dQ_0}(y_w(s)) }{ \prod_{s = 0}^t  \prod_{w \in \cN_u(s) } \frac{dQ_1}{dQ_0}(y_w(s)) }.
\end{align}
Equation \eqref{eq:ratio_pi_v1} follows directly from \eqref{eq:pi_recursion} and \eqref{eq:lr_decomposition}. The difference between \eqref{eq:ratio_pi_v1} and \eqref{eq:ratio_pi_v2} is that we take a product over $w \in \cN_u(s) \setminus \cN_v(s)$ and $w \in \cN_v(s) \setminus \cN_u(s)$ in the former, and $w \in \cN_u(s)$ and $w \in \cN_v(s)$ in the latter. The expressions are equivalent since the vertices in $\cN_u(s) \cap \cN_v(s)$ are cancelled out. We display both equations, as each will be useful in different contexts. 

Equation \eqref{eq:ratio_pi_v2} implies that the posterior probabilities can be written as 
$$
\pi_v(t) = \frac{1}{Y(t)} \prod\limits_{s = 0}^t \prod\limits_{w \in \cN_v(s)} \frac{dQ_1}{dQ_0}(y_w(s)),
$$
where the normalizing constant, $Y(t)$, is given by 
$$
Y(t) : = \sum\limits_{v \in V_n} \prod\limits_{s = 0}^t \prod\limits_{w \in \cN_v(s)} \frac{dQ_1}{dQ_0} (y_w(s)).
$$
It will be convenient to use the notation $\pi_v(t) = X_v(t) / Y(t)$, where $X_v(t)$ is explicitly given by 
$$
X_v(t) : = \prod\limits_{s = 0}^t \prod\limits_{w \in \cN_v(s)} \frac{dQ_1}{dQ_0} (y_w(s)).
$$
With this notation, $Y(t) = \sum_{u \in V_n} X_u(t)$. The following lemma establishes some basic properties of the collection $\{X_u(t) \}_{u \in V_n}$. 

\begin{lemma}
\label{lemma:xu_mean}
Denote 
$$
\beta : = \E_{\cA \sim Q_1} \left [ \frac{dQ_1}{dQ_0}(\cA) \right].
$$
Then $\beta > 1$ and for any $u,v \in V_n$ and $t \ge 0$, 
$$
\E_v [ X_u(t) ] = \beta^{ \sum_{s = 0}^t | \cN_v(s) \cap \cN_u(s) |}.
$$
\end{lemma}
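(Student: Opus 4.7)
The plan is to directly compute the expectation by exploiting independence across time and across vertices, together with the explicit conditional distribution of $y_w(s)$ under $\p_v$.

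First, I would write
\[
\E_v[X_u(t)] = \E_v \left[ \prod_{s=0}^t \prod_{w \in \cN_u(s)} \frac{dQ_1}{dQ_0}(y_w(s)) \right] = \prod_{s=0}^t \prod_{w \in \cN_u(s)} \E_v\!\left[ \frac{dQ_1}{dQ_0}(y_w(s)) \right],
\]
where the factorization uses the fact that, conditional on the source being $v$, the variables $\{y_w(s)\}_{w \in V, s \ge 0}$ are mutually independent under the data model \eqref{eq:public_signals}. Next, I would split the inner expectation based on whether $w \in \cN_v(s)$: if $w \in \cN_u(s) \cap \cN_v(s)$, then $y_w(s) \sim Q_1$, so the expectation equals $\beta$ by definition; if $w \in \cN_u(s) \setminus \cN_v(s)$, then $y_w(s) \sim Q_0$, and the expectation becomes
\[
\E_{\cA \sim Q_0}\!\left[ \frac{dQ_1}{dQ_0}(\cA) \right] = \int \frac{dQ_1}{dQ_0}(a)\, dQ_0(a) = \int dQ_1(a) = 1.
\]
Collecting these two cases gives
\[
\E_v[X_u(t)] = \prod_{s=0}^t \beta^{|\cN_u(s) \cap \cN_v(s)|} = \beta^{\sum_{s=0}^t |\cN_u(s) \cap \cN_v(s)|},
\]
which is the stated identity.

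The remaining claim is that $\beta > 1$. I would rewrite
\[
\beta = \int \frac{dQ_1}{dQ_0}\, dQ_1 = \int \left( \frac{dQ_1}{dQ_0} \right)^{\!2} dQ_0
\]
and apply Jensen's inequality (equivalently Cauchy--Schwarz) against the baseline $\int \frac{dQ_1}{dQ_0}\, dQ_0 = 1$, yielding $\beta \ge 1$ with equality if and only if $dQ_1/dQ_0$ is $Q_0$-almost surely constant, i.e.\ $Q_0 = Q_1$. Since the data model assumes $Q_0$ and $Q_1$ are distinct mutually absolutely continuous measures, this forces strict inequality $\beta > 1$.

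There is really no central obstacle here: the only subtlety is justifying that the product of expectations equals the expectation of the product, which I would explicitly attribute to the conditional independence of $\{y_w(s)\}$ under $\p_v$ that is built into \eqref{eq:public_signals}. Everything else is bookkeeping and a one-line Jensen argument.
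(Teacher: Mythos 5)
Your proof is correct and follows essentially the same route as the paper: factor the expectation using conditional independence of the public signals, evaluate each factor as $\beta$ or $1$ depending on whether $w \in \cN_v(s)$, and deduce $\beta > 1$ by a change of measure plus Jensen. The only difference is ordering (you prove the identity first and $\beta > 1$ second), which is immaterial.
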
 

\begin{proof}
We start by proving $\beta > 1$. We can write
\begin{align*}
\E_{\cA \sim Q_1} \left[ \frac{dQ_1}{dQ_0}(\cA) \right] & = \E_{\cB \sim Q_0} \left[ \left( \frac{dQ_1}{dQ_0}(\cB) \right)^2 \right] \\
& \ge \E_{\cB \sim Q_0} \left[ \frac{dQ_1}{dQ_0}(\cB) \right]^2 = 1.
\end{align*}
Above, the first equality is due to a change of measure (a valid operation since $Q_0, Q_1$ are mutually absolutely continuous), and the inequality is due to Jensen's inequality. Above, the inequality is strict since the equality case only occurs if $\frac{dQ_1}{dQ_0}(\cB)$ is a constant (equivalently, $Q_0 = Q_1$). 

Since $\{y_w(s) \}_{w \in V, 0 \le s \le t}$ is a collection of independent random variables conditioned on $v^* = v$, we have
\begin{align}
\label{eq:xu_expectation_expansion}
\E_v [ X_u(t) ] & = \E_v \left [ \prod\limits_{s = 0}^t \prod\limits_{w \in \cN_u(s)} \frac{dQ_1}{dQ_0} (y_w(s)) \right] \\
& = \prod\limits_{s = 0}^t \prod\limits_{w \in \cN_u(s)} \E_v \left [ \frac{dQ_1}{dQ_0} (y_w(s)) \right].
\end{align}
For each term in the product, we have
\begin{equation}
\label{eq:llr_expectation}
\E_v \left [ \frac{dQ_1}{dQ_0} (y_w(s)) \right] = \begin{cases}
\beta & d(v,w) \le s \\
1 & \text{ else,}
\end{cases}
\end{equation}
where we have used the fact that $y_w(s) \sim Q_1$ in the first case, and $y_w(s) \sim Q_0$ in the second. The desired statement follows from substituting \eqref{eq:llr_expectation} into \eqref{eq:xu_expectation_expansion}.
\end{proof}

The following lemma bounds the covariance between $X_u(t)$ and $X_v(t)$. As a matter of notation, we recall that the {\it neighborhood growth function} (defined in \eqref{eq:neighborhood_growth_fn}) is given by $f(t) : = \sum_{s = 0}^t | \cN(s)|$. 

\begin{lemma}
\label{lemma:xu_cov}
For any $u,v,w \in V_n$ and $t \ge 0$, there is a constant $\lambda = \lambda(Q_0,Q_1)$ such that $\mathrm{Cov}_v(X_u(t), X_w(t)) = 0$ if $d(u,w) > 2t$ and $\mathrm{Cov}_v(X_u(t), X_w(t)) \le \lambda^{f(t)}$ if $d(u,w) \le 2t$. 
\end{lemma}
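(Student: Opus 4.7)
The plan is to compute the covariance exactly in terms of a few key expectations and then bound the resulting expression. The main bookkeeping tool is a partition of the union $\cN_u(s) \cup \cN_w(s)$ into three disjoint pieces for each $s$:
$$
A_s := \cN_u(s) \cap \cN_w(s), \qquad B_s := \cN_u(s) \setminus \cN_w(s), \qquad C_s := \cN_w(s) \setminus \cN_u(s).
$$
Define the factored quantities
$$
L_A := \prod_{s=0}^t \prod_{x \in A_s} \frac{dQ_1}{dQ_0}(y_x(s)), \quad L_B := \prod_{s=0}^t \prod_{x \in B_s} \frac{dQ_1}{dQ_0}(y_x(s)), \quad L_C := \prod_{s=0}^t \prod_{x \in C_s} \frac{dQ_1}{dQ_0}(y_x(s)),
$$
so that $X_u(t) = L_A L_B$ and $X_w(t) = L_A L_C$. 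Since the products involve disjoint collections of the independent signals $\{y_x(s)\}$, the triple $(L_A, L_B, L_C)$ is mutually independent under $\p_v$.

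For the case $d(u,w) > 2t$: observe that if $x \in A_s$ then $d(u,w) \le d(u,x) + d(x,w) \le 2s \le 2t$, a contradiction. Hence $A_s = \emptyset$ for all $s \le t$, so $L_A \equiv 1$ and $X_u(t) = L_B$, $X_w(t) = L_C$ are independent under $\p_v$, giving $\Cov_v(X_u(t), X_w(t)) = 0$.

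For the case $d(u,w) \le 2t$: by mutual independence of $L_A, L_B, L_C$,
$$
\Cov_v(X_u(t), X_w(t)) = \E_v[L_B]\,\E_v[L_C]\,\bigl(\E_v[L_A^2] - \E_v[L_A]^2\bigr).
$$
Introduce the constants
$$
\beta := \E_{\cA \sim Q_1}\!\left[\tfrac{dQ_1}{dQ_0}(\cA)\right], \qquad \gamma := \E_{\cA \sim Q_1}\!\left[\bigl(\tfrac{dQ_1}{dQ_0}(\cA)\bigr)^2\right],
$$
both at least $1$ by Jensen (as in Lemma \ref{lemma:xu_mean}). Computing each factor as in the proof of Lemma \ref{lemma:xu_mean}, noting that an $x \in \cN_v(s)$ contributes a factor of $\beta$ (resp.\ $\gamma$) to $\E_v[L_*]$ (resp.\ $\E_v[L_A^2]$) while an $x \notin \cN_v(s)$ contributes $1$ (resp.\ $\beta$), one obtains
$$
\E_v[L_B] = \beta^{\sum_s |B_s \cap \cN_v(s)|}, \qquad \E_v[L_C] = \beta^{\sum_s |C_s \cap \cN_v(s)|},
$$
$$
\E_v[L_A^2] = \gamma^{\sum_s |A_s \cap \cN_v(s)|} \beta^{\sum_s |A_s \setminus \cN_v(s)|}, \qquad \E_v[L_A]^2 = \beta^{2\sum_s |A_s \cap \cN_v(s)|}.
$$

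Since $\gamma \ge \beta^2 \ge 1$, the bracketed difference is nonnegative, so dropping it to its first term and discarding the minus sign yields an upper bound of the form $\beta^{E_1} \gamma^{E_2}$, where each exponent $E_i$ is a sum over $s$ of a cardinality bounded by $|\cN(s)|$. Each of the four exponents is therefore at most $f(t) = \sum_{s=0}^t |\cN(s)|$, so
$$
\Cov_v(X_u(t), X_w(t)) \le \gamma^{f(t)}\, \beta^{3 f(t)} = (\gamma \beta^3)^{f(t)}.
$$
Setting $\lambda := \gamma \beta^3$ (a constant depending only on $Q_0, Q_1$) completes the proof. The only real obstacle is the bookkeeping in the partition-based factorization; once $L_A, L_B, L_C$ are identified, all the expectations decouple and reduce to the one-coordinate calculation already used in Lemma \ref{lemma:xu_mean}.
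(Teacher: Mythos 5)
Your proof is correct but takes a genuinely different route from the paper for the case $d(u,w) \le 2t$. The paper drops the subtracted term $\E_v[X_u]\E_v[X_w]$ (valid since both are nonnegative) and then applies Cauchy--Schwarz, $\Cov_v(X_u,X_w) \le \E_v[X_u^2]^{1/2}\E_v[X_w^2]^{1/2}$, reducing the problem to bounding $\E_v[X_u^2]$, which it writes as $\lambda_1^{\sum_s |\cN_u(s)\cap \cN_v(s)|}\lambda_0^{\sum_s |\cN_u(s)\setminus\cN_v(s)|}$ with $\lambda_1 = \E_{Q_1}[(dQ_1/dQ_0)^2]$ and $\lambda_0 = \E_{Q_0}[(dQ_1/dQ_0)^2]$, then bounds this by $(\max\{\lambda_0,\lambda_1\})^{f(t)}$. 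You instead factor $X_u = L_A L_B$, $X_w = L_A L_C$ over the disjoint blocks $A_s, B_s, C_s$ and use mutual independence to get the exact identity $\Cov_v(X_u,X_w) = \E_v[L_B]\E_v[L_C](\E_v[L_A^2]-\E_v[L_A]^2)$, which cleanly isolates the covariance as coming entirely from the variance of the shared piece $L_A$. This is a more transparent account of \emph{why} the covariance is controlled and also furnishes the vanishing case for free (when $d(u,w)>2t$, $L_A\equiv 1$), whereas the paper handles that case by a separate independence remark. Both proofs ultimately rely on the same one-coordinate moment calculations; your observation that $\lambda_0 = \beta$ by a change of measure is correct and lets you write everything in terms of $\beta, \gamma$. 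The resulting constant $\lambda = \gamma\beta^3$ is slightly larger than the paper's $\max\{\lambda_0,\lambda_1\} = \gamma$, but the lemma only requires a constant depending on $Q_0, Q_1$, so this is immaterial. One minor remark: you did not strictly need the $\gamma \ge \beta^2$ argument to see that $\E_v[L_A^2]-\E_v[L_A]^2 \ge 0$, since it is simply $\mathrm{Var}_v(L_A)$, but the extra line does no harm.
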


\begin{proof}
Since $X_u(t)$ depends only on the signals in $\cN_u(t)$, it is clear that $X_u(t)$ and $X_w(t)$ are independent under $\p_v$ if $d(u,w) > 2t$. Hence $\mathrm{Cov}_v(X_u(t), X_w(t)) = 0$ in this case. To handle the case where $d(u,w) \le 2t$, we first define 
\begin{align*}
\lambda_1 & : = \mathop{\E}_{\cA \sim Q_1} \left [ \left( \frac{dQ_1}{dQ_0} (\cA) \right)^2 \right] , \lambda_0  : = \mathop{\E}_{\cB \sim Q_0} \left [ \left( \frac{dQ_1}{dQ_0}(\cB) \right)^2 \right].
\end{align*}
We have the following bound on the covariance due to the Cauchy-Schwartz inequality. 
\begin{align*}
\mathrm{Cov}_v(X_u(t), X_w(t) ) & \le \E_v [ X_u(t) X_w(t) ] \\
& \le \E_v [ X_u(t)^2 ]^{1/2} \E_v [ X_w(t)^2 ]^{1/2}.
\end{align*}
To bound $\E_v [ X_u(t)^2 ]$, we can write
\begin{align*}
\E_v [ X_u(t)^2 ] & = \prod\limits_{s = 0}^t \prod\limits_{a \in \cN_u(s)} \E_v \left [ \left( \frac{dQ_1}{dQ_0}(y_a(s)) \right)^2 \right] \\
& =  \lambda_1^{ \sum_{s = 0}^t | \cN_u(s) \cap \cN_v(s) |} \lambda_0^{\sum_{s = 0}^t | \cN_u(s) \setminus \cN_v(s) |} \\
& \le (\max\{\lambda_0, \lambda_1\})^{ \sum_{s = 0}^t | \cN_u(s) |} \\
& = (\max\{ \lambda_0, \lambda_1 \})^{f(t)}.
\end{align*}
Since the bound we have derived holds for any $u \in V_n$, it follows that $\mathrm{Cov}_v(X_u(t), X_w(t)) \le (\max\{\lambda_0, \lambda_1 \})^{f(t)}$, which proves the desired claim with $\lambda := \max\{\lambda_0, \lambda_1\}$. 
\end{proof}

The results we have established allow us to prove the following concentration result for $Y(t)$ when $t$ is not too large. 

\begin{lemma}
\label{lemma:Y}
Recall the constants $\beta = \beta(Q_0, Q_1)$ (defined in Lemma \ref{lemma:xu_mean}) and $\lambda = \lambda(Q_0, Q_1)$ (defined in \ref{lemma:xu_cov}). Furthermore let $F : = f^{-1}$ be the inverse of the neighborhood growth function. If
$$
t \le F \left( \frac{\log n}{4 \log (\max \{ \beta, \lambda \} )} \right),
$$
then for any $\epsilon > 0$, 
\begin{equation}
\label{eq:lemma_final_chebyshev_Y}
\max\limits_{v \in V_n} \p_v ( | Y(t) - n | \ge \epsilon n ) \le \frac{4}{\epsilon^2 \sqrt{n}}.
\end{equation}
\end{lemma}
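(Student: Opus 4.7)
The plan is to apply Chebyshev's inequality to $Y(t)$, after separately controlling (i) the bias $|\E_v[Y(t)] - n|$ and (ii) the variance $\Var_v(Y(t))$. The key enabler is that the hypothesis $f(t) \le \log n / (4 \log \max\{\beta,\lambda\})$ forces both $\beta^{f(t)}$ and $\lambda^{f(t)}$ to be at most $n^{1/4}$, while $|\cN(2t)|$ remains polylogarithmic in $n$ for the topologies considered.

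First I would compute $\E_v[Y(t)]$ using Lemma \ref{lemma:xu_mean} and split the sum according to $d(u,v)$. For $u \in V_n$ with $d(u,v) > 2t$ the neighborhoods $\cN_u(s)$ and $\cN_v(s)$ are disjoint for every $s \le t$, so Lemma \ref{lemma:xu_mean} gives $\E_v[X_u(t)] = 1$. For the at most $|\cN(2t)|$ vertices with $d(u,v) \le 2t$, each term satisfies $\E_v[X_u(t)] \le \beta^{f(t)}$. Combining these yields
\begin{equation*}
n - |\cN(2t)| \;\le\; \E_v[Y(t)] \;\le\; n + |\cN(2t)|\, \beta^{f(t)}.
\end{equation*}
Using the combinatorial bounds on $|\cN(\cdot)|$ from Appendix \ref{sec:size_of_neighborhoods} (which in both trees and lattices give $|\cN(2t)| \le (\log n)^{C}$ whenever $f(t) \le \log n / (4 \log \max\{\beta,\lambda\})$), together with $\beta^{f(t)} \le n^{1/4}$, the bias is $|\E_v[Y(t)] - n| = o(\sqrt{n})$; in particular it is at most $\epsilon n / 2$ for $n$ large.

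Next I would bound $\Var_v(Y(t)) = \sum_{u,w \in V_n} \Cov_v(X_u(t), X_w(t))$ via Lemma \ref{lemma:xu_cov}. Only pairs with $d(u,w) \le 2t$ contribute, and each such covariance is at most $\lambda^{f(t)}$, so
\begin{equation*}
\Var_v(Y(t)) \;\le\; n \cdot |\cN(2t)| \cdot \lambda^{f(t)} \;\le\; n \cdot (\log n)^{C} \cdot n^{1/4} \;=\; O\!\left( n^{5/4}(\log n)^{C}\right).
\end{equation*}
Combining the bias estimate with Chebyshev's inequality yields
\begin{equation*}
\p_v(|Y(t) - n| \ge \epsilon n) \;\le\; \p_v\!\left(|Y(t) - \E_v Y(t)| \ge \tfrac{\epsilon n}{2}\right) \;\le\; \frac{4\, \Var_v(Y(t))}{\epsilon^2 n^2} \;\le\; \frac{4 (\log n)^{C}}{\epsilon^2\, n^{3/4}},
\end{equation*}
which is at most $4 / (\epsilon^2 \sqrt{n})$ once $n$ is sufficiently large. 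Since the bounds on $|\cN(2t)|$, $\beta^{f(t)}$, and $\lambda^{f(t)}$ depend only on $f(t)$ and the underlying graph, the estimate is uniform in $v \in V_n$.

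The main obstacle is the control of $|\cN(2t)|$: one needs that, under the hypothesis on $t$, the doubled neighborhood size is small enough to keep $|\cN(2t)| \cdot \max\{\beta,\lambda\}^{f(t)}$ well below $\sqrt{n}$ (for the variance) and below $n$ (for the bias). In regular trees this follows from $|\cN(2t)| \asymp f(t)^2$, and in lattices from $|\cN(2t)| \asymp f(t)^{\ell/(\ell+1)}$; both give $|\cN(2t)| \le (\log n)^{O(1)}$ under the stated constraint. Once this combinatorial bound is in place, the Chebyshev step is routine.
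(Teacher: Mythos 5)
Your proposal takes essentially the same route as the paper: compute and bound $\E_v[Y(t)]$ via Lemma~\ref{lemma:xu_mean} by splitting at $d(u,v) \le 2t$, bound $\Var_v(Y(t))$ via Lemma~\ref{lemma:xu_cov} (only pairs within $2t$ contribute), and apply Chebyshev using the hypothesis on $t$ to control $\beta^{f(t)}$, $\lambda^{f(t)}$, and $|\cN(2t)|$. The only minor difference is cosmetic: the paper uses the sharper $\E_v[Y(t)] \ge n$ (since $\E_v[X_u(t)] \ge 1$ always, as $\beta > 1$) and absorbs the $|\cN(2t)|$ and $f(t)^2$ prefactors into the exponential terms to hit the stated constant exactly, whereas your version leaves a harmless $(\log n)^{O(1)}$ slack that needs ``$n$ large'' to absorb.
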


\begin{proof}
Let $\beta = \beta(Q_0, Q_1)$ be the constant defined in Lemma \ref{lemma:xu_mean}. We begin by computing the expectation of $Y(t)$ with respect to $\p_v$. 
\begin{align*}
\E_v [ Y(t) ] & = \sum\limits_{u \in V_n} \E_v [ X_u(t)] \\
& = \sum\limits_{u \in V_n} \beta^{ \sum_{s = 0}^t | \cN_v(s) \cap \cN_u(s) |} \\
& = \sum\limits_{u \in \cN_v(2t)} \beta^{ \sum_{s = 0}^t | \cN_v(s) \cap \cN_u(s) |}  + | V_n \setminus \cN_v(2t) |.
\end{align*}
Since $\beta > 1$ and $\sum_{s = 0}^t | \cN_v(s) \cap \cN_u(s)| \le f(t)$, we have the bounds
$$
n \le \E_v[Y(t) ] \le n + |\cN(2t) | \beta^{f(t)}.
$$
From Lemmas \ref{lemma:trees_exact} and \ref{lemma:lattices_exact}, we have the asymptotics 
\begin{align*}
| \cN(2t) | & \asymp \begin{cases}
(k - 1)^{2t} & \text{ $G$ is a $k$-regular tree}; \\
t^\ell & \text{ $G$ is a $\ell$-dimensional lattice}
\end{cases} \\
f(t) & \asymp \begin{cases}
(k-1)^t & \text{ $G$ is a $k$-regular tree}; \\
t^{\ell + 1} & \text{ $G$ is a $\ell$-dimensional lattice}.
\end{cases} 
\end{align*}
Hence we have, for $t$ sufficiently large, the simpler upper bound of $n + \beta^{2 f(t)}$ for $\E_v[Y(t)]$. Next, suppose that $t$ satisfies $f(t) \le \frac{\log n}{4 \log \beta }$ so that $\beta^{2f(t)} \le \sqrt{n}$. Then
\begin{align}
\p_v \left( | Y(t) - n | \ge \epsilon n \right) & \le \p_v \left( | Y(t) - \E_v [ Y(t) ]| \ge \frac{\epsilon}{2} n \right) \nonumber \\
\label{eq:Y_chebyshev}
& \le \frac{4 \cdot \mathrm{Var}_v( Y(t))}{\epsilon^2 n^2},
\end{align}
where the first inequality holds if $\epsilon \ge \frac{2}{\sqrt{n}}$ and the second inequality is due to Chebyshev's inequality. Using Lemma \ref{lemma:xu_cov}, we can bound the variance of $Y(t)$ as 
\begin{align*}
\mathrm{Var}_v ( Y(t)) & = \sum\limits_{u,w \in V_n} \mathrm{Cov}_v ( X_u(t), X_w(t)) \\
& \le \sum\limits_{u \in V_n} | \cN(2t)| \lambda^{f(t)} \\
& \le 2n f(t)^2\lambda^{f(t)} \\
& \le n \lambda^{2f(t)},
\end{align*}
where we have used $|\cN(2t) | \le f(2t) \le 2 f(t)^2$ and $\lambda = \lambda(Q_0, Q_1)$ is defined in Lemma \ref{lemma:xu_cov}. Moreover, if $f(t) \le \frac{\log n}{4 \log \lambda}$, then $\lambda^{2f(t)} \le \sqrt{n}$ and $\mathrm{Var}_v(Y(t)) \le n^{3/2}$. The desired result follows. 
\end{proof}

Our final result establishes exponential lower tail bounds for the ratio $\pi_v(t) / \pi_u(t)$ using Chernoff bounds. Before stating the result, we define some notation. Let
\begin{equation}
\label{eq:symmetric_KL}
D(Q_0, Q_1) : = \E_{\cA \sim Q_1} \left [ \log \frac{dQ_1}{dQ_0}(\cA) \right ] + \E_{\cB \sim Q_0} \left [ \log \frac{dQ_0}{dQ_1} ( \cB) \right] 
\end{equation}
denote the {\it symmetrized Kullblack-Liebler divergence}, and define the rate function 
\begin{equation}
\label{eq:ldp_rate_function}
I(x)  : = \sup\limits_{\lambda \ge 0} \left \{-  \lambda ( {D}(Q_0, Q_1) - x ) \vphantom{\left [ \left( \frac{dQ_0}{dQ_1}(\cA) \right)^\lambda \left( \frac{dQ_1}{dQ_0}(\cB) \right)^\lambda \right]}- \log \E \left [ \left( \frac{dQ_0}{dQ_1}(\cA) \right)^\lambda \left( \frac{dQ_1}{dQ_0}(\cB) \right)^\lambda \right] \right \},
\end{equation}
where $\cA \sim Q_1$ and $\cB \sim Q_0$ are independent. Finally, for $u,v \in V$, define the {\it neighborhood difference function}
$$
f_{vu}(t) : = \sum\limits_{s = 0}^t | \cN_v(s) \setminus \cN_u(s) |. 
$$

\begin{lemma}
\label{lemma:llr_chernoff}
Let $u,v \in V$ be any two vertices. If $x > 0$, then $I(x) > 0$ and  
\begin{equation}
\label{eq:llr_chernoff}
\p_v \left ( \frac{\pi_v(t)}{\pi_u(t)} \le e^{( {D}(Q_0, Q_1) - x) f_{vu}(t)} \right ) \le e^{ - I(x) f_{vu}(t)}, \hspace{0.25cm} x > 0.
\end{equation}
\end{lemma}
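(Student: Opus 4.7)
The plan is to recognize the left-hand side of \eqref{eq:llr_chernoff} as an event about a sum of independent random variables under $\p_v$, and then apply a standard Chernoff bound whose rate function turns out to be exactly $I(x)$.

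First I would take logarithms in equation \eqref{eq:ratio_pi_v1} and write
\[
\log \frac{\pi_v(t)}{\pi_u(t)} \;=\; \sum_{s=0}^{t}\!\left( \sum_{w \in \cN_v(s)\setminus\cN_u(s)} \!\log\tfrac{dQ_1}{dQ_0}(y_w(s)) \;-\; \sum_{w \in \cN_u(s)\setminus\cN_v(s)}\!\log\tfrac{dQ_1}{dQ_0}(y_w(s)) \right).
\]
Under $\p_v$, for $w\in\cN_v(s)\setminus\cN_u(s)$ we have $y_w(s)\sim Q_1$, and for $w\in\cN_u(s)\setminus\cN_v(s)$ we have $y_w(s)\sim Q_0$; furthermore, since $G$ is vertex-transitive (a regular tree or lattice), $|\cN_v(s)|=|\cN_u(s)|$ and hence $|\cN_v(s)\setminus\cN_u(s)|=|\cN_u(s)\setminus\cN_v(s)|$. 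Taking expectations term by term, one gets $\E_v[\log\pi_v(t)/\pi_u(t)] = D(Q_0,Q_1)\cdot f_{vu}(t)$, so \eqref{eq:llr_chernoff} is a lower-tail deviation of at least $x\cdot f_{vu}(t)$ below the mean.

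Next I would apply the exponential Markov inequality: for any $\lambda\ge 0$,
\[
\p_v\!\left(\tfrac{\pi_v(t)}{\pi_u(t)} \le e^{(D(Q_0,Q_1)-x)f_{vu}(t)}\right)
\;\le\; e^{\lambda(D(Q_0,Q_1)-x)f_{vu}(t)}\;\E_v\!\left[\left(\tfrac{\pi_u(t)}{\pi_v(t)}\right)^{\!\lambda}\right].
\]
Using \eqref{eq:ratio_pi_v1} again, the independence of the $\{y_w(s)\}$ under $\p_v$, and the distributional identification above, the moment generating factor splits into a product over $s$ and $w$, giving
\[
\E_v\!\left[\left(\tfrac{\pi_u(t)}{\pi_v(t)}\right)^{\!\lambda}\right] \;=\; \left(\E\!\left[\left(\tfrac{dQ_0}{dQ_1}(\cA)\right)^{\!\lambda}\!\left(\tfrac{dQ_1}{dQ_0}(\cB)\right)^{\!\lambda}\right]\right)^{\!f_{vu}(t)}
\]
with $\cA\sim Q_1$ and $\cB\sim Q_0$ independent, where the exponent is again produced by the equality $|\cN_v(s)\setminus\cN_u(s)|=|\cN_u(s)\setminus\cN_v(s)|$. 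Plugging this back and optimizing over $\lambda\ge 0$ yields exactly the bound $e^{-I(x)f_{vu}(t)}$.

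It remains to verify $I(x)>0$ for $x>0$. Let $\psi(\lambda)$ denote the bracketed expression in the definition of $I$. Clearly $\psi(0)=0$, so it suffices to show $\psi'(0)>0$. By dominated convergence, the derivative at $0$ of $\log\E[(dQ_0/dQ_1(\cA))^\lambda(dQ_1/dQ_0(\cB))^\lambda]$ equals $\E_{\cA\sim Q_1}[\log dQ_0/dQ_1(\cA)] + \E_{\cB\sim Q_0}[\log dQ_1/dQ_0(\cB)] = -D(Q_0,Q_1)$, so $\psi'(0) = -(D(Q_0,Q_1)-x) + D(Q_0,Q_1) = x > 0$. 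Hence $\psi(\lambda)>0$ for some $\lambda>0$, giving $I(x)>0$. The main potential obstacle is a technical one: justifying the interchange of derivative and expectation at $\lambda=0$ and ensuring finiteness of the MGF near $0$. This is standard (the MGF is finite at $\lambda=0$ with value $1$ and is convex, hence finite in a neighborhood of $0$ under the mutual absolute continuity of $Q_0$ and $Q_1$ together with the finiteness of $D(Q_0,Q_1)$), and no substantive new ideas are required beyond the Chernoff method itself.
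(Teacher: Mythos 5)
Your proposal is correct and follows essentially the same route as the paper: decompose $\log(\pi_v(t)/\pi_u(t))$ via \eqref{eq:ratio_pi_v1}, use vertex-transitivity to pair up the $|\cN_v(s)\setminus\cN_u(s)| = |\cN_u(s)\setminus\cN_v(s)|$ terms so the MGF factors as a power of $\E[(dQ_0/dQ_1(\cA))^\lambda (dQ_1/dQ_0(\cB))^\lambda]$ with exponent $f_{vu}(t)$, apply the Chernoff bound, and establish $I(x)>0$ by showing the Legendre objective has derivative $x>0$ at $\lambda=0$ via dominated convergence. The paper packages the pairing as a distributional identity $\pi_v(t)/\pi_u(t) \stackrel{d}{=} \prod_{i=1}^{f_{vu}(t)} W_i$ and spells out the mean-value-theorem domination needed for the LDCT, but the substance is identical.
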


\begin{proof}
Conditioned on $\cA, \cB$, define the function 
$$
g(\lambda) : = \left( \frac{dQ_0}{dQ_1}(\cA) \frac{dQ_1}{dQ_0}(\cB) \right)^\lambda.
$$
Note that $g(\lambda)$ is differentiable, and by the mean-value theorem, we have for any $\lambda_1 \in [0,1]$ that there exists $\eta \in [0, \lambda_1]$ so that
\begin{align}
\frac{|g(\lambda_1) - 1|}{\lambda_1}  & = |g'(\eta)|  = \eta \left| \frac{dQ_0}{dQ_1}(\cA) \frac{dQ_1}{dQ_0}(\cB) \right|^{\eta- 1} \nonumber \\
\label{eq:ldct_1}
& \le  1 + \left | \frac{dQ_0}{dQ_1}(\cA) \frac{dQ_1}{dQ_0} (\cB) \right| .
\end{align}
In the final inequality, we have used $\eta \le 1$ as well as the fact that $a^{\eta - 1} \le a$ if $a \ge 1$ else $a^{\eta - 1} \le 1$ if $0 \le a \le 1$. Furthermore, note that due to the independence of $\cA$ and $\cB$, 
\begin{equation*}
\E_{\cA \sim Q_1, \cB \sim Q_0} \left [ \frac{dQ_0}{dQ_1}(\cA) \frac{dQ_1}{dQ_0}(\cB) \right] \\
= \E_{\cA \sim Q_1} \left[ \frac{dQ_0}{dQ_1}(\cA) \right] \E_{\cB \sim Q_0} \left[ \frac{dQ_1}{dQ_0}(\cB) \right] = 1.
\end{equation*}
It then follows from the definition of the Lebesgue integral that 
\begin{equation}
\label{eq:ldct_2}
\E \left [ \left| \frac{dQ_0}{dQ_1}(\cA) \frac{dQ_1}{dQ_0}(\cB) \right| \right] < \infty.
\end{equation}
In particular, the dominating function for $|g(\lambda_1) - 1|/\lambda_1$ in \eqref{eq:ldct_1} is integrable. Together, \eqref{eq:ldct_1} and \eqref{eq:ldct_2} along with the Lebesgue Dominated Convergence Theorem imply that $\E[ g(\lambda)]$ is differentiable at $\lambda = 0$. Moreover, the derivative of $\log \E [ g(\lambda)]$ at $\lambda = 0$ is equal to $-D(Q_0, Q_1)$. Next, if we define the function 
$$
h(\lambda) : = - \lambda( D(Q_0, Q_1) - x) - \log \E [ g(\lambda)],
$$
then the results we have established thus far imply $h'(0) = x > 0$. Since $h(0) = 0$, it follows that $h(\lambda) / \lambda > 0$ for sufficiently small $\lambda$, and in particular $h(\lambda) > 0$. Since $I(x) = \sup_{\lambda \ge 0} h(\lambda)$, the claim $I(x) > 0$ follows. 

We now show how one can use the rate function $I(x)$ to obtain the inequality \eqref{eq:llr_chernoff}. Recall that under the measure $\p_v$, the variables $\{y_w(s) \}_{w \in V, 0 \le s \le t}$ are independent, with 
$$
y_w(s) \sim \begin{cases}
Q_1 & w \in \cN_v(s) \\
Q_0 & \text{ else}. 
\end{cases}
$$
Using the representation \eqref{eq:ratio_pi_v1}, the following distributional identity holds under the measure $\p_v$:
\begin{equation}
\label{eq:dist_rep_1}
\frac{ \pi_v(t) }{\pi_u(t) } \stackrel{d}{=} \prod\limits_{i = 1}^{f_{vu}(t)} W_i, 
\end{equation}
where the $W_i$'s are i.i.d. with distribution given by 
\begin{equation}
\label{eq:dist_rep_2}
W_i \stackrel{d}{=} \frac{dQ_1}{dQ_0}(\cA) \left( \frac{dQ_1}{dQ_0}(\cB) \right)^{-1},
\end{equation}
for independent $\cA \sim Q_1$ and $\cB \sim Q_0$. A Chernoff-type bound implies that 
\begin{align*}
\p_v  \left( \frac{\pi_v(t)}{\pi_u(t)} \le e^{({D}(Q_0, Q_1) - x ) f_{vu}(t)} \right) & = \inf\limits_{\lambda \ge 0} \p_v \left( \left( \frac{\pi_v(t)}{\pi_u(t)} \right)^{-\lambda} \ge e^{- \lambda ({D}(Q_0, Q_1) - x ) f_{vu}(t) } \right) \\
& \le \inf\limits_{\lambda \ge 0} \mathrm{exp} \left( \lambda ( \widetilde{D}(Q_0, Q_1) - x ) f_{vu}(t) \vphantom{\sum\limits_{s = 0}^t }  +  \log \E \left [ \left( \frac{\pi_v(t)}{\pi_u(t)} \right)^{-\lambda} \right] \right) \\
& = \mathrm{exp} \left( - I(x) f_{vu}(t) \right),
\end{align*}
where the final expression follows from the distributional representation for $\pi_v(t) / \pi_u(t)$ in \eqref{eq:dist_rep_1} and \eqref{eq:dist_rep_2}. 
\end{proof}

\subsection{Lower bounding the estimation error: Proof of Lemma \ref{lemma:estimation_error_lower_bound}}
\label{subsec:lower_bound}

At a high level, the proof strategy is to first establish a probabilistic lower bound for $\E_{\pi(t)}[ d(v^*, u) ]$ where $u \in V_n$ is {\it fixed}. Through union bounds, this will lead to a probabilistic lower bound for $\min_{u \in V_n} \E_{\pi(t)} [ d(v^*, u) ]$. We remark that the proof of Lemma \ref{lemma:estimation_error_lower_bound} makes use of some combinatorial properties of trees and lattices, the proofs of which may be found in Appendix \ref{sec:geodesics}. 

For a fixed vertex $u \in V_n$, we can write
\begin{align*}
\E_{\pi(t)} [ d(v^*, u)  ] & = \sum\limits_{w \in V_n} d(w,u) \pi_w(t) \\
& = \frac{1}{Y(t) } \sum\limits_{w \in V_n} d(w,u) X_w(t).
\end{align*}
Lemma \ref{lemma:Y} has already established a concentration inequality for $Y(t)$, so we will proceed by establishing a probabilistic lower bound for $\sum_{w \in V_n} d(u, w) X_w(t)$. For any $v \in V_n$, we have
\begin{align}
\E_v \left [ \sum\limits_{w \in V_n} d(u,w) X_w(t) \right] & = \sum\limits_{w \in V_n} d(u,w) \E_v [ X_w(t) ] \nonumber \\
\label{eq:sum_expectation_lower_bound}
& \ge \sum\limits_{w \in V_n} d(u,w),
\end{align}
where the inequality is due to $\E_v [ X_w(t) ] \ge 1$, which was proved in Lemma \ref{lemma:xu_mean}. We can also upper bound the variance as 
\begin{align}
 \mathrm{Var}_v \left( \sum\limits_{w \in V_n} d(u,w) X_w(t) \right) & = \sum\limits_{w_1 \in V_n} \sum\limits_{w_2 \in V_n} d(u, w_1) d(u, w_2) \mathrm{Cov}_v \left( X_{w_1}(t), X_{w_2}(t) \right) \nonumber  \\
& \stackrel{(a)}{\le} \lambda^{f(t)} \sum\limits_{w_1 \in V_n} \sum\limits_{w_2 \in V_n: d(w_1, w_2) \le 2t} d(u, w_1) d(u, w_2) \nonumber \\
& \stackrel{(b)}{\le} \lambda^{f(t)} | \cN(2t)| \sum\limits_{w_1 \in V_n} d(u, w_1) ( d(u, w_1) + 2t) \nonumber \\
& = \lambda^{f(t)} | \cN(2t) | \left( \sum\limits_{w \in V_n} d(u, w)^2 + 2t \sum\limits_{w \in V_n} d(u,w) \right) \nonumber \\
\label{eq:sum_var_upper_bound}
& \stackrel{(c)}{\le} \lambda^{f(t)} | \cN(2t)| (1 + 2t) \sum\limits_{w \in V_n} d(u, w)^2,
\end{align}
where $(a)$ follows from Lemma \ref{lemma:xu_cov}, $(b)$ is due to the inequality $d(u, w_2) \le d(u, w_1) + d(w_1, w_2) \le d(u, w_1) + 2t$, and $(c)$ follows from bounding $d(u,w) \le d(u,w)^2$. 

Next, an application of inequality \eqref{eq:sum_expectation_lower_bound} and Chebyshev's inequality yields 
\begin{equation}
\label{eq:lemma1_chebyshev}
\p_v \left( \sum\limits_{v \in V_n} d(u,w) X_w(t) \le \frac{1}{2} \sum\limits_{w \in V_n} d(u,w) \right) \\
 \le 4 \lambda^{f(t)} | \cN(2t) | (1 + 2t) \cdot \frac{ \sum_{w \in V_n} d(u,w)^2 }{ \left( \sum_{w \in V_n} d(u,w) \right)^2 },
\end{equation}
where the right hand side uses the variance upper bound \eqref{eq:sum_var_upper_bound}. To proceed, we bound the right hand side of \eqref{eq:lemma1_chebyshev} when $G$ is a regular tree or a lattice. Although we treat these cases separately for convenience, the methodology is the same. \\
\\
\noindent {\bf Case 1: $G$ is a $k$-regular tree.} \\
In this case, Lemma \ref{lemma:tree_distance_bounds} shows that
\begin{align*}
\sum\limits_{w \in V_n} d(u,w) & \ge \frac{n \log n}{k \log(k - 1)} \\
\sum\limits_{w \in V_n} d(u,w)^2 & \le \frac{4n \log^2 n}{\log^2 (k - 1)}. 
\end{align*}
Substituting the above bounds into \eqref{eq:lemma1_chebyshev} shows that
\begin{align}
\p_v \left( \sum\limits_{w \in V_n} d(u,w) X_w(t) \le \frac{n \log n}{2k \log (k - 1)} \right) & \le  \lambda^{f(t)} | \cN(2t)| (1 + 2t) \frac{16k^2}{n} \nonumber \\
\label{eq:lemma1_trees_bound}
& \le \frac{50 t \lambda^{f(t)}| \cN(2t)| k^2}{n}.
\end{align}
A na\"{i}ve method for deriving a probabilistic bound for $\min_{u \in V_n} \sum_{w \in V_n} d(u,w) X_w(t)$ is to take a union bound over the events pertaining to $\sum_{w \in V_n} d(u,w) X_w(t)$ for all $u \in V_n$. However, the probability bound in \eqref{eq:lemma1_trees_bound} is not small enough for a union bound to work, since $|V_n | = n$. Fortunately, as we shall see, it suffices to take a union bound over a much smaller set of vertices. To this end, define 
$$
m : = \left \lceil r_n + 1 - \frac{\log n}{6k \log(k - 1)} \right \rceil.
$$
Since $r_n \sim \log (n) / \log (k - 1)$ (see \eqref{eq:rn_asymptotics}), $m \sim \left( 1 - \frac{1}{6k} \right) r_n$. In addition, it holds for $n$ sufficiently large that 
\begin{align*}
| \cN_{v_0}(m) | & \stackrel{(d)}{=} 1 + \frac{k}{k - 2} \left( (k - 1)^m - 1 \right) \\
& \stackrel{(e)}{\le} 1 + \frac{k}{k-2} ( k - 1)^{ \left( 1 - \frac{1}{12k} \right) \frac{\log n}{\log (k - 1)} } \\
& \stackrel{(f)}{\le} 2k n^{1 - \frac{1}{12k}},
\end{align*}
where $(d)$ follows from Lemma \ref{lemma:trees_exact}, $(e)$ holds since $m \le \left( 1 - \frac{1}{12k} \right) \frac{\log n}{\log (k - 1)}$ for $n$ sufficiently large due to the asymptotics of $r_n$, and $(f)$ follows from upper bounding the coefficient on the first-order term in the second line. Combining \eqref{eq:lemma1_trees_bound} with a union bound over elements of $\cN_{v_0}(m)$ implies
\begin{equation}
\label{eq:tree_restricted_set_bd}
\p_v \left( \min\limits_{u \in \cN_{v_0}(m)} \sum\limits_{w \in V_n} d(u,w) X_w(t) \le \frac{n \log n}{2k \log (k - 1)} \right) \le 100t \lambda^{f(t)} | \cN(2t)| k^3 n^{- \frac{1}{12k}}.
\end{equation}
Next, define the event 
$$
\cE : = \left \{ \frac{n}{2} \le Y(t) \le \frac{3n}{2} \right \}.
$$
If the event $\cE$ holds, we have the series of implications
\begin{align*}
 \min\limits_{u \in \cN_{v_0}(m)} \sum\limits_{w \in V_n} d(u,w) \pi_w(t) \le \frac{\log n}{3k \log (k-1)} & \Rightarrow \min\limits_{u \in \cN_{v_0}(m)} \frac{1}{3n/2} \sum\limits_{w \in V_n} d(u,w) X_w(t) \le \frac{\log n}{3k \log (k-1)} \\
& \Rightarrow \min\limits_{u \in \cN_{v_0}(m)} \sum\limits_{w \in V_n} d(u,w) X_w(t) \le \frac{n \log n}{2k \log (k-1)}.
\end{align*}
Above, the first implication uses $\pi_w(t) = X_w(t) / Y(t)$ and the fact that $Y(t) \le 3n/2$ on $\cE$. We then have, for $t \le F \left( \frac{\log n}{4 \log \max \{\beta, \lambda \}} \right)$, 
\begin{align}
 \p_v \left( \min\limits_{u \in \cN_{v_0}(m) } \sum\limits_{w \in V_n} d(u,w) \pi_w(t) \le \frac{\log n}{3k \log (k - 1) } \right) & \le \p_v \left( \min\limits_{u \in \cN_{v_0}(m)} \sum\limits_{w \in V_n} d(u,w) X_w(t) \le \frac{n\log n}{2k \log (k - 1) } \right) \nonumber \\
& \hspace{1cm} + \p_v(\cE^c) \nonumber \\
\label{eq:lemma1_trees_final_bound}
& \le 100t \lambda^{f(t)} | \cN(2t)| k^3 n^{- \frac{1}{12k}} + \frac{16}{\sqrt{n}}.
\end{align}
The final inequality above follows from Lemma \ref{lemma:Y} as well as the bound in \eqref{eq:tree_restricted_set_bd}. If we additionally have $t \le F \left( \frac{\log n}{24k \log \lambda} \right)$, we have the bounds
\begin{align*}
t & \le (1 + o_n(1)) \frac{\log \log n}{\log (k - 1)} \\
\lambda^{f(t)} &  \le n^{\frac{1}{24k}}, \\
| \cN(2t)| & \le \log^{2 + o_n(1)} n,
\end{align*}
where $o_n(1) \to 0$ as $n \to \infty$. This shows in particular that the final expression in \eqref{eq:lemma1_trees_final_bound} can be bounded for $n$ sufficiently large by 
$$
\frac{16}{\sqrt{n}} + \frac{200k^3 (\log \log n) (\log^3 n)}{\log (k - 1)} n^{- \frac{1}{24k}} \le n^{-\frac{1}{30k}}.
$$
To put everything together, the way we have defined $m$ implies that for every $u \in V_n$ there is $u' \in \cN_{v_0}(m)$ such that $d(u, u') \le \frac{\log n}{6k \log (k - 1)}$. Moreover,
\begin{align*}
\sum\limits_{w \in V_n} d(u,w) \pi_w(t) & \ge \sum\limits_{w \in V_n} (d(u', w) - d(u, u')) \pi_w(t) \\
& \ge \sum\limits_{w \in V_n} d(u', w)\pi_w(t) - \frac{\log n}{6k \log (k - 1)}.
\end{align*}
We therefore have 
\begin{align*}
\min\limits_{u' \in \cN_{v_0}(m)} \sum\limits_{w \in V_n} d(u',w) \pi_w(t) > \frac{\log n}{3k \log (k - 1)}  & \Rightarrow \min\limits_{u \in V_n} \sum\limits_{w \in V_n} d(u,w) \pi_w(t) > \frac{\log n}{6 k \log (k- 1)}.
\end{align*}
To summarize, we have shown that if we set 
$$
a_1 := \left( \max\{ 4 \log \beta, 24k \log \lambda \} \right)^{-1},
$$
then for $t \le F(a_1 \log n)$, 
$$
\p_v \left ( \min\limits_{u \in V_n} \sum\limits_{w \in V_n} d(u,w) \pi_w(t) \le \frac{\log n}{6 k \log (k - 1)} \right) \le n^{- \frac{1}{30k}}.
$$
Taking a union bound over $0 \le t \le F (a_1 \log n)$, and recalling that $\E_{\pi(t)} [ d(v^*, u)] = \sum_{w \in V_n} d(u,w) \pi_w(t)$, we arrive at
\begin{align*}
\p_v & \left ( \max\limits_{0 \le t \le F(a_1 \log n)} \E_{\pi(t)} [ d(v^*, \widehat{v}_B(t)) ] \le \frac{\log n}{6 k \log (k - 1)} \right)  \le F(a_1 \log n) n^{- \frac{1}{30 k}}.
\end{align*}
Since $F(a_1 \log n) \sim \frac{\log \log n}{\log (k - 1)}$, the right hand side tends to 0 uniformly over $v \in V_n$ as $n \to \infty$, and the result of Lemma \ref{lemma:estimation_error_lower_bound} for trees follows.  \\
\\
{\bf Case 2: $G$ is a $\ell$-dimensional lattice.}\\
By Lemma \ref{lemma:geodesic_lattice}, there exist constants $c_3, c_4 > 0$ depending only on $\ell$ such that 
\begin{align*}
\sum\limits_{w \in V_n} d(u,w) & \ge c_3 n^{1 + \frac{1}{\ell} } \\
 \sum\limits_{w \in V_n} d(u,w)^2 & \le c_4 n^{1 + \frac{2}{\ell} }.
\end{align*}
Substituting the above bounds into \eqref{eq:lemma1_chebyshev} shows that
\begin{align*}
\p_v  \left( \sum\limits_{w \in V_n} d(u,w) X_w(t) \le \frac{c_3}{2} n^{1 + \frac{1}{\ell} } \right) & \le 4 \lambda^{f(t)} | \cN(2t)| (1 + 2t) \frac{c_4 n^{1 + 2/\ell}}{c_3^2 n^{2 + 2/\ell}} \\
& \le \left( \frac{12 c_4}{c_3} \right) \frac{ t \lambda^{f(t)} | \cN(2t)| }{n}.
\end{align*}
As in the case of regular trees, we need to take a union bound over a small set of vertices. The following combinatorial lemma guarantees the existence of such a set. 

\begin{lemma}
\label{lemma:lattice_covering}
For every $n$, there exists a set $S_n$ whose size can be bounded as a function of $\ell$ only, such that for every $u \in V_n$, there exists $u' \in S_n$ such that $d(u,u') \le \frac{c_3}{6} n^{1/\ell}$. 
\end{lemma}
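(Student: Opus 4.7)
The plan is to construct $S_n$ as a maximal $r$-separated subset of $V_n$ for a carefully chosen threshold $r$, and then bound its cardinality by a standard packing-versus-volume comparison. Set $r := \lfloor \tfrac{c_3}{6} n^{1/\ell} \rfloor$. I would construct $S_n \subseteq V_n$ greedily: initialize $S_n = \emptyset$, and then iteratively add to $S_n$ any $u \in V_n$ satisfying $\min_{u' \in S_n} d(u,u') > r$, stopping once no such vertex remains. The termination condition immediately yields the covering property, since every $u \in V_n$ ends up within distance $r \le \tfrac{c_3}{6} n^{1/\ell}$ of some $u' \in S_n$.

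Next, I would bound $|S_n|$ via a packing argument. Any two distinct $u_1, u_2 \in S_n$ satisfy $d(u_1, u_2) > r$, so the balls $\{\cN_{u_i}(\lfloor r/2 \rfloor)\}_{u_i \in S_n}$ are pairwise disjoint. Moreover, since $S_n \subseteq V_n \subseteq \cN_{v_0}(r_n + 1)$, each such ball is contained in $\cN_{v_0}(r_n + 1 + \lfloor r/2 \rfloor)$. Hence
\begin{equation*}
|S_n| \cdot \min_{v \in V}  |\cN_v(\lfloor r/2 \rfloor)| \le |\cN_{v_0}(r_n + 1 + \lfloor r/2 \rfloor)|.
\end{equation*}
By the explicit formulas from Lemma \ref{lemma:lattices_exact} in Appendix \ref{sec:size_of_neighborhoods}, $|\cN_v(k)|$ depends only on $k$ (by translation invariance of the lattice) and is of order $k^\ell$ with constants depending only on $\ell$. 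Combining this with the asymptotic $r_n \sim (\ell!/2^\ell)^{1/\ell} n^{1/\ell}$ from \eqref{eq:rn_asymptotics} and our choice $r = \Theta(n^{1/\ell})$, both sides of the displayed inequality are of order $n$, and the ratio $(r_n + 1 + r/2)^\ell / (r/2)^\ell$ is bounded by a constant $C_\ell$ depending only on $\ell$. This gives $|S_n| \le C_\ell$ as required.

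There is no real technical obstacle here: the argument is a standard doubling/volume comparison that works because the $\ell$-dimensional lattice has uniformly polynomial ball growth of the matching order $\Theta(k^\ell)$. The only point requiring mild care is keeping the floor/ceiling bookkeeping consistent so that the covering radius genuinely lies below $\tfrac{c_3}{6} n^{1/\ell}$ and the disjointness of the half-radius balls is preserved under the integer-valued graph metric; both are handled transparently by choosing $r$ as above.
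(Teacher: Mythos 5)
Your proof is correct and takes essentially the same route as the paper: construct a maximal separated (packing) set, observe that maximality yields the covering property, and bound its cardinality by a disjoint-balls-versus-volume comparison using the uniform polynomial growth $|\cN(k)|\asymp k^{\ell}$. One small remark: your choice of separation $r=\lfloor\tfrac{c_3}{6}n^{1/\ell}\rfloor$ directly yields the covering radius $\tfrac{c_3}{6}n^{1/\ell}$ required by the statement, whereas the paper sets $m=\lfloor\tfrac{c_3}{8}n^{1/\ell}\rfloor$ and concludes with the weaker bound $2m\le\tfrac{c_3}{4}n^{1/\ell}$, so your bookkeeping is actually a touch tighter (and cleaner) than what is printed there.
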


Before proving the lemma, we shall show how we can use it to prove Lemma \ref{lemma:estimation_error_lower_bound} for lattices. Mirroring the steps of \eqref{eq:lemma1_trees_final_bound} in the case of lattices, if $t \le F\left( \frac{\log n}{4 \log \max \{\beta, \lambda \}} \right)$ we arrive at the probability bound
\begin{equation}
\label{eq:lemma1_lattices_final_bound}
\p_v \left( \min\limits_{u \in S_n} \sum\limits_{w \in V_n} d(u,w) \pi_w(t) \le \frac{c_3}{3} n^{1/\ell} \right) \\
\le \frac{16}{\sqrt{n}} + \frac{12c_4}{c_3^2} | S_n| \cdot \frac{t \lambda^{f(t)} | \cN(2t)|}{n}.
\end{equation}
If we additionally have $t \le F \left( \frac{\log n}{3 \log \lambda} \right)$, we have the bounds
\begin{align*}
t & \le O \left( (\log n)^{\frac{1}{\ell + 1}} \right) \\
\lambda^{f(t)} & \le n^{1/3} \\
| \cN(2t) | & \le O \left( (\log n)^{\frac{\ell}{\ell + 1}} \right).
\end{align*}
The big-$O$ bounds follow from the asymptotic behavior of $F$ (see \eqref{eq:F_asymptotics}) as well as the asymptotic behavior of $| \cN(t) |$ (see \eqref{eq:lattice_N}). For $n$ sufficiently large, we can therefore bound the right hand side in \eqref{eq:lemma1_lattices_final_bound} by 
\begin{equation}
\label{eq:lemma1_lattices_prob_bound}
\frac{16}{\sqrt{n}} + O((\log n) n^{-2/3} ) \le \frac{20}{\sqrt{n}},
\end{equation}
where the final inequality holds for $n$ sufficiently large. Putting everything together, since for every $u \in V_n$ we can find $u' \in S_n$ such that $d(u, u') \ge \frac{c_3}{6} n^{1/\ell}$, we have
\begin{align*}
& \min\limits_{u' \in S_n} \sum\limits_{w \in V_n} d(u', w) \pi_w(t)  > \frac{c_3}{3} n^{1/\ell}  \Rightarrow \min\limits_{u \in V_n} \sum\limits_{w \in V_n} d(u,w) \pi_w(t) > \frac{c_3}{6} n^{1/\ell}.
\end{align*}
Hence, if we set 
$$
a_1' := \left( \max\{ 4 \log \beta, 4 \log \lambda \} \right)^{-1},
$$
then \eqref{eq:lemma1_lattices_final_bound} and \eqref{eq:lemma1_lattices_prob_bound} imply that for $t \le F(a_1' \log n)$, 
$$
\p_v \left( \min\limits_{u \in V_n} \sum\limits_{w \in V_n} d(u,w) \pi_w(t) \le \frac{c_3}{6} n^{1/\ell} \right) \le \frac{20}{\sqrt{n}}.
$$
Taking a union bound over $0 \le t \le F(a_1' \log n)$ and recalling the definition of $\E_{\pi(t)} [ d(v^*, u)]$, we arrive at
\begin{align*}
\p_{v} & \left( \max\limits_{0 \le t \le F(a_1' \log n) } \E_{\pi(t)} [ d(v^*, \widehat{v}_B(t)) ] \le \frac{c_3}{6} n^{1/\ell} \right) \le \frac{20 F(a_1' \log n)}{\sqrt{n}}.
\end{align*}
Due to the asymptotic behavior of $F$ (see \eqref{eq:F_asymptotics}), the right hand side tends to 0 uniformly over $v \in V_n$ as $n \to \infty$, and the result of Lemma \ref{lemma:estimation_error_lower_bound} for lattices follows.

We now turn to the proof of Lemma \ref{lemma:lattice_covering}. 

\begin{proof}[Proof of Lemma \ref{lemma:lattice_covering}]
Let $m$ be a positive integer. We define an $m$-covering of $V_n$ to be $S \subseteq V_n$ such that for any $u \in V_n$, there exists $v \in S$ such that $d(u,v) \le m$. We also define an $m$-packing of $V_n$ to be $S \subseteq V_n$ such that for any $u, v \in S$, $d(u,v) > m$. We say a $S$ is a {\it maximal} $m$-packing of $V_n$ if it has the maximum possible cardinality. A fundamental result on coverings and packings is that a maximal $m$-packing is also a valid covering \cite[Lemma 5.12]{ramon_notes}, so the proof focuses on bounding the size of a maximal packing. Our proof is based on \cite[Lemma 5.13]{ramon_notes}. 

Set $m : = \lfloor \frac{c_3}{8} n^{1/\ell} \rfloor$ and let $S \subset V_n$ be a $2m$-packing. This in particular implies that $\{\cN_u(m) \}_{u \in S}$ is a collection of disjoint sets satisfying 
$$
\bigcup\limits_{u \in S} \cN_u(m) \subseteq \bigcup\limits_{u \in V_n} \cN_u(m) \subseteq \cN_{v_0}(r_n + 1 + m).
$$
This in turn implies
\begin{equation}
\label{eq:S_inequality1}
\sum\limits_{u \in S} | \cN_u(m) | = |S| \cdot | \cN(m) | \le | \cN(r_n + 1 + m) |.
\end{equation}
Since $r_n \asymp n^{1/\ell}$ (see \eqref{eq:rn_asymptotics}) and $m \asymp n^{1/\ell}$, we can find constants $C_1, C_2 > 0$ depending only on $\ell$ such that 
$$
C_1 n^{1/\ell} \le m \le r_n + 1 + m \le C_2 n^{1/\ell}.
$$
Next, recall that $|\cN(t) | \sim c_\ell t^\ell$, where $c_\ell$ is a constant depending only on $\ell$ (see \eqref{eq:lattice_N}). Hence 
$$
|S| \le \frac{ | \cN(r_n + 1 + m)|}{ | \cN(m) | } \sim \frac{ (r_n + 1 + m)^\ell}{m^\ell} \le \left( \frac{C_2}{C_1} \right)^\ell.
$$
Note that the right hand side is of constant order even as $n \to \infty$. Moreover, the bound holds for {\it all} $2m$-packings, including {\it maximal} packings that are also coverings. This guarantees the existence of a $2m$-covering of size bounded by a constant depending on $\ell$ even as $n \to \infty$. We conclude by noting that $2m \le \frac{c_3}{4} n^{1/\ell}$.
\end{proof}

\subsection{Upper bounding the estimation error: Proof of Lemma \ref{lemma:estimation_error_upper_bound}}
\label{subsec:upper_bound}

For any vertex $v \in V_n$, we can write
\begin{align*}
\E_{\pi(t)} [ d(v^*, \widehat{v}_B(t))] & \stackrel{(a)}{\le} \E_{\pi(t)} [ d(v^*, v) ] = \sum\limits_{w \in V_n} d(w,v) \pi_w(t) \\
& \stackrel{(b)}{\le} \sum\limits_{w \in V_n} d(w,v) \frac{X_w(t)}{X_v(t)},
\end{align*}
where $(a)$ follows since $\E_{\pi(t)} [ d(v^*, \widehat{v}_B(t) ) ] = \min_{u \in V_n} \E_{\pi(t)} [ d(v^*, u)  ]$ and $(b)$ follows since $\pi_w(t) = X_w(t) / Y(t)$ and $Y(t) \ge X_v(t)$. For distinct vertices $w,v \in V$, recall the notation $f_{vw}(t) : = \sum_{s = 0}^t | \cN_v(s) \setminus \cN_w(s) |$ and further recall that ${D}(Q_0, Q_1)$ is the symmetrized Kullback-Liebler divergence between $Q_0$ and $Q_1$ (see \eqref{eq:symmetric_KL}). As a shorthand, denote $\theta : = {D}(Q_0, Q_1)/2$. Next, define the event
$$
\cE_{vw} : = \left \{ \frac{X_v(t)}{X_w(t)} \ge e^{\theta f_{vw}(t)} \right \}.
$$
By Lemma \ref{lemma:llr_chernoff}, $\p_v( \cE_{vw}^c) \le e^{- I (\theta) f_{vw}(t)}$, where $I(\cdot)$ is the large-deviations rate function defined in Lemma \ref{lemma:llr_chernoff}. On the event $\cE_v : = \bigcup_{w \in V_n \setminus \{v \}} \cE_{vw}$, we have the bound
\begin{align}
\E_{\pi(t)} [ d(v^*, \widehat{v}_B(t)) ] & \le \sum\limits_{w \in V_n} d(w,v) \frac{X_w(t)}{X_v(t)} \nonumber  \\
\label{eq:risk_bound}
& \le \sum\limits_{w \in V_n} d(w,v) e^{- \theta f_{vw}(t) }.
\end{align}
To bound the final summation in \eqref{eq:risk_bound}, we split the summation into two parts: $w$ such that $d(v,w) \le 2t$ and $w$ such that $d(v,w) > 2t$. To handle the first part, it is useful to define the function 
$$
f_1(t) : = \sum\limits_{s = 0}^t | \cN_a(s) \setminus \cN_b(s) |
$$
where $a,b$ are any two neighboring vertices (since the graph is vertex-transitive, we obtain the same formula for any two neighboring $a,b$). We may now bound the summation over $w$ such that $d(v,w) \le 2t$ as 
\begin{equation}
\label{eq:lemma2_summation1}
\sum\limits_{w \in V_n : d(w,v) \le 2t} d(w,v) e^{ - \theta f_{vw}(t) } \le 2t | \cN(2t) | e^{- \theta f_1(t)}.
\end{equation}
To handle the summation over $w$ such that $d(v,w) > 2t$, first note that 
\begin{equation}
\label{eq:fvw_f_equivalence}
f_{vw}(t) = \sum\limits_{s = 0}^t | \cN_v(s) \setminus \cN_w(s) | \stackrel{(c)}{=} \sum\limits_{s = 0}^t | \cN_v(s) | = f(t),
\end{equation}
where the equality $(c)$ follows since $\cN_v(s) \cap \cN_w(s) = \emptyset$ for $0 \le s \le t$ because $d(v,w) > 2t$. Hence we can bound the second part of the summation by 
\begin{align}
\label{eq:lemma2_summation2}
\sum\limits_{w \in V_n: d(w,v) > 2t} d(w,v) e^{ - \theta f(t) }  & \le \sum\limits_{w \in V_n : d(w,v) > 2t} n e^{ - \theta f(t) } \nonumber \\
& \le  n^2 e^{- \theta f(t) },
\end{align}
where we have used the coarse bound $d(w,v) \le n$ above. Putting everything together, the total bound on the estimation error on the event $\cE_v$ is 
$$
2t | \cN(2t) | e^{ - \theta f_1(t) } + n^2 e^{- \theta f(t) }.
$$
The remaining element of the proof is to bound $\p_v(\cE_v^c)$. We can write 
\begin{align}
\p_v ( \cE_v^c) & \stackrel{(d)}{\le} \sum\limits_{w \in V_n \setminus \{v \}} \p_v ( \cE_{vw}^c ) \nonumber \\
& \stackrel{(e)}{\le} \sum\limits_{w \in V_n \setminus \{v \}} e^{- I(\theta) f_{vw}(t) } \nonumber \\
& \stackrel{(f)}{\le} \sum\limits_{w \in V_n: d(w,v) \le 2t} e^{ - I(\theta) f_1(t) } + \sum\limits_{w \in V_n : d(w,v) > 2t} e^{ - I(\theta) f(t) } \nonumber \\
\label{eq:lemma2_prob_bound}
& \le | \cN(2t) | e^{ - I(\theta) f_1(t) } + n e^{ - I(\theta) f(t) }.
\end{align}
Above, $(d)$ is due to a union bound, $(e)$ follows from Lemma \ref{lemma:llr_chernoff}, and $(f)$ uses $f_1(t) \le f_{vw}(t)$ as well as $f_{vw}(t) = f(t)$ if $d(v,w) > 2t$ (see \eqref{eq:fvw_f_equivalence}). Putting everything together, we have shown that for any $v \in V_n$, 
\begin{align*}
\p_v & \left( \E_{\pi(t)} [ d(v^*, \widehat{v}_B(t)) ] > 2t | \cN(2t) | e^{- \theta f_1(t)} + n^2 e^{-\theta f(t)} \right) \\
& \le | \cN(2t) | e^{-I(\theta) f_1(t) } + ne^{- I(\theta) f(t) }.
\end{align*}
Note in particular that the probability bound above holds uniformly over all $v \in V_n$. Focusing on the special cases of regular trees and lattices, we will simplify the bound on the estimation error as well as the probability bound. \\
\\
\noindent {\bf Case 1: $G$ is a $k$-regular tree.} Lemma \ref{lemma:trees_exact} provides the asymptotics of various combinatorial quantities related to neighborhood sizes, summarized below: 
\begin{align*}
| \cN(2t) | & \sim \frac{k}{k-2} (k - 1)^{2t} \\
f_1(t) &  \sim \frac{(k-1)^{t + 1}}{k - 2} \\
f(t) & \sim \frac{k}{(k - 2)^2} (k - 1)^{t + 1} \\
F(z) & \sim \frac{\log z}{\log (k - 1) }.
\end{align*}
The terms $2t | \cN(2t) | e^{ - \theta f_1(t) }$ and $| \cN(2t) | e^{- I(\theta) f_1(t) }$ can therefore be bounded by $e^{- O( (k - 1)^t ) }$ for $t$ sufficiently large. If $t \ge F\left( \frac{4 \log n}{\theta} \right)$, then $n^2 e^{ - \theta f(t) } \le e^{ - \frac{1}{2} \theta f(t) } = e^{ - O ( (k - 1)^t ) }$, where the hidden factors in the big $O$ do not depend on $n$. Similarly, if $t \ge F \left( \frac{2 \log n}{I(\theta) } \right)$, then $n e^{- I(\theta) f(t) } \le e^{ - O ( (k- 1)^t ) }$, where again, the hidden factors do not depend on $n$. 

Putting everything together, we have shown that for $t \ge F \left( \frac{4 \log n}{\min \{\theta, I(\theta) \}} \right)$, 
$$
\p_{v} \left( \E_{\pi(t)} [ d(v^*, \widehat{v}_B(t)) ] > e^{- O( (k - 1)^t ) } \right) \le e^{- O ( (k - 1)^t ) },
$$
which implies the desired result for $k$-regular trees. \\
\\
\noindent {\bf Case 2: $G$ is a $\ell$-dimensional lattice.} Lemma \ref{lemma:lattices_exact} proves the following asymptotic behavior of neighborhood sizes in $\ell$-dimensional lattices:
$$
| \cN(2t) | \asymp t^\ell, \hspace{1cm} f(t) \asymp t^{\ell + 1}, \hspace{1cm} F(z) \asymp z^{\frac{1}{\ell + 1}}.
$$
Additionally, we can lower bound $f_1(t)$ as follows: if $u,v$ are adjacent, then
\begin{equation}
\label{eq:f1_lower_bound}
f_1(t) = \sum\limits_{s = 0}^t | \cN_v(s) \setminus \cN_u(s) | \ge t + 1,
\end{equation}
where the inequality above uses the fact that $\cN_v(s) \setminus \cN_u(s) \neq \emptyset$ for all $s \ge 0$. We can therefore bound the terms
\begin{align*}
2t | \cN(2t) | e^{ - \theta f_1(t) } & \le O \left( t^{\ell + 1} e^{- \theta t} \right) \\
| \cN(2t) | e^{- I(\theta) f_1(t) } & \le O \left( t^\ell e^{- I(\theta) t} \right).
\end{align*}
Additionally, as in the previous case, if $t \ge F \left( \frac{4 \log n}{ \min \{\theta, I(\theta)  \}} \right)$ then $n^2 e^{- \theta f(t) }$ and $n e^{- I(\theta) f(t) }$ are bounded by $e^{- O(t^{\ell + 1} ) }$, where the hidden factors in the big $O$ do not depend on $n$. Putting everything together, we have shown that for $t \ge F\left( \frac{4 \log n}{\min \{\theta, I(\theta) \}} \right)$, 
$$
\p_{\pi(V_n)} \left( \E_{\pi(t)} [ d(v^*, \widehat{v}_B(t) ) ] > O \left( t^{\ell + 1} e^{- \theta t} \right) \right) \le O \left( t^\ell e^{- I(\theta) t} \right),
$$
which implies the desired result for $n$ sufficiently large if we set $b_2 = 0.5\cdot \min \{ \theta, I(\theta) \}$.

\section{Proof of the MSPRT upper bounds}
\label{sec:msprt}

\subsection{Useful preliminary results}

We start by stating and recalling some useful combinatorial results concerning the sizes of neighborhoods in regular trees and lattices. To begin, for vertices $u,v$ recall that
$$
f_{vu}(t) : = \sum\limits_{s = 0}^t | \cN_v(s) \setminus \cN_u(s) |.
$$
Moreover, recall that the {\it neighborhood growth function} (originally defined in \eqref{eq:neighborhood_growth_fn}) is 
$$
f(t) : = \sum\limits_{s = 0}^t | \cN(s) |.
$$
We also define, for any pair of adjacent vertices $u,v$, the function
$$
f_1(t) : = \sum\limits_{s = 0}^t | \cN_v(s) \setminus \cN_u(s) |.
$$
We also define the inverse functions $F_{vu} = f_{vu}^{-1}, F = f^{-1}, F_1 = f_1^{-1}$. These inverse functions are well-defined since $f_{vu}, f, f_1$ are strictly increasing functions. In $k$-regular trees, we have the asymptotics
\begin{equation}
\label{eq:msprt_F1_trees}
F_1(z) \sim \frac{\log z}{\log (k - 1)}.
\end{equation}
For a proof, see Lemma \ref{lemma:trees_exact}. In $\ell$-dimensional lattices, we have the {\it orderwise} asymptotics
\begin{equation}
\label{eq:msprt_asymptotics_lattices}
| \cN(t) | \asymp t^\ell, \qquad f(t) \asymp t^{\ell + 1}, \qquad F(z) \asymp z^{\frac{1}{\ell + 1}}.
\end{equation}
For a more precise statement, see Lemma \ref{lemma:lattices_exact}. Next, we prove a few simple, generic results regarding these functions. The following result provides a simple but useful bounds for $f_1$ that hold in regular trees and lattices. 

\begin{lemma}
\label{lemma:f1_linear_bound}
Suppose that $G$ is an infinite regular tree or lattice and $u,v \in V$ are two distinct vertices. If $t_2 \ge t_1 \ge 0$, $f_{vu}(t_2) - f_{vu}(t_1) \ge t_2 - t_1$. 
\end{lemma}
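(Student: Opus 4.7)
The plan is to reduce the inequality to the pointwise bound $|\cN_v(s) \setminus \cN_u(s)| \ge 1$ for every $s \ge 0$. This follows from the telescoping identity
$$
f_{vu}(t_2) - f_{vu}(t_1) \;=\; \sum_{s = t_1 + 1}^{t_2} |\cN_v(s) \setminus \cN_u(s)|,
$$
which has exactly $t_2 - t_1$ summands (vacuously $0 \ge 0$ when $t_1 = t_2$). So once each summand is shown to be at least $1$, the claim follows immediately.

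To prove the pointwise bound, I would exhibit a witness vertex $w_s$ with $d(v, w_s) = s$ and $d(u, w_s) = d(u,v) + s$; since $d(u,v) \ge 1$, this gives $w_s \in \cN_v(s) \setminus \cN_u(s)$. The witness is constructed by walking from $v$ along a ray that moves one step further from $u$ at each step. In a $k$-regular tree with $k \ge 3$, this is straightforward: given $w_{s-1}$ with $d(u, w_{s-1}) = d(u,v) + s - 1$, at most one of its $k$ neighbors lies on the geodesic toward $u$, so at least $k - 1 \ge 2$ neighbors satisfy $d(u, \cdot) = d(u, w_{s-1}) + 1$; pick any such neighbor as $w_s$, and $d(v, w_s) = s$ by construction. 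In an $\ell$-dimensional lattice, pick any coordinate $i$ with $u_i \ne v_i$, let $\sigma = \sgn(u_i - v_i)$, and define $w_s := v - s\sigma e_i$. Then $d(v, w_s) = s$, and the $i$-th coordinate contributes $|u_i - v_i| + s$ while the other coordinates contribute $\sum_{j \ne i} |u_j - v_j|$, yielding $d(u, w_s) = d(u,v) + s$. The $2$-regular tree case reduces to the $\ell = 1$ lattice case per the convention in the footnote to Definition \ref{defn:lattice}.

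There is no serious obstacle: the only content is the explicit ray construction, which relies on the vertex-transitive, ``outward-branching'' structure of both graph classes. I would write the proof as a single paragraph after the telescoping observation, splitting into the two cases only for the definition of $w_s$.
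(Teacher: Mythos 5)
Your proposal is correct and follows the same telescoping decomposition as the paper's proof, which reduces the claim to the pointwise bound $|\cN_v(s) \setminus \cN_u(s)| \ge 1$. The paper simply asserts this bound without justification, whereas you supply the missing details via the explicit ray construction $w_s$; this is a genuine (if small) improvement, since the bound is not automatic for arbitrary infinite graphs (it fails, for instance, at a pendant vertex) and really does use the outward-branching structure of regular trees and lattices.
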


\begin{proof}
For any two distinct vertices $u,v$ and any non-negative integer $s$,  $|\cN_v(s) \setminus \cN_u(s) | \ge 1$. Hence 
$$
f_{vu}(t_2) - f_{vu}(t_1) = \sum\limits_{s = t_1 + 1}^{t_2} | \cN_v(s) \setminus \cN_u(s) | \ge t_2 - t_1.
$$
\end{proof}

Next, we prove a simple linear upper bound for $F_1$, since its exact expression is challenging to compute in lattices. 

\begin{lemma}
\label{lemma:F1_linear_bound}
Let $G$ be an infinite regular tree or lattice. Then $F_1(z) \le z$. 
\end{lemma}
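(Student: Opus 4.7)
The plan is to derive this from Lemma \ref{lemma:f1_linear_bound} by a one-line invocation and then invert. First I would apply Lemma \ref{lemma:f1_linear_bound} to an arbitrary pair of adjacent vertices $u,v$ (which is admissible since $f_1$ is defined with respect to any such pair by vertex-transitivity), taking $t_1 = 0$ and $t_2 = t$. This yields $f_1(t) - f_1(0) \ge t$, and since $f_1(0) = |\cN_v(0) \setminus \cN_u(0)| = |\{v\} \setminus \{u\}| = 1$ for distinct $u,v$, we obtain $f_1(t) \ge t + 1 \ge t$ for all $t \ge 0$.

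Next I would invert this inequality. Since $f_1$ is strictly increasing on the non-negative integers (every increment $|\cN_v(s+1) \setminus \cN_u(s+1)| \ge 1$), the inverse $F_1$ is well-defined and also monotone increasing. Setting $t = F_1(z)$ in the bound $f_1(t) \ge t$ gives $z = f_1(F_1(z)) \ge F_1(z)$, which is exactly the desired inequality $F_1(z) \le z$.

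There is no substantial obstacle here: the entire argument is a direct corollary of Lemma \ref{lemma:f1_linear_bound} combined with the base value $f_1(0) = 1$ and monotonicity. The only subtlety worth mentioning is that the statement is most naturally read for real-valued $z$ in the range of $f_1$, so I would implicitly extend $F_1$ by, say, piecewise-linear interpolation (or simply restrict to integer $z \ge 1$), but this is a cosmetic point that does not affect the argument.
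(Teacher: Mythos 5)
Your argument is correct and follows the paper's proof essentially verbatim: invoke Lemma \ref{lemma:f1_linear_bound} with $t_1 = 0$, note $f_1(0) = 1$ to get $f_1(t) \ge t + 1 \ge t$, and invert to conclude $F_1(z) \le z$. Your remark on extending $F_1$ to real $z$ is a reasonable clarification but, as you say, cosmetic.
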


\begin{proof}
Lemma \ref{lemma:f1_linear_bound} implies the lower bound
$$
f_1(t) = f_1(0) + f_1(t) - f_1(0) \ge t + 1 \ge t.
$$
Setting $z = f_1(t)$, we have $F_1(z) = t$ which implies the desired result. 
\end{proof}

The following lemma derives conditions under which $F_{vu} = F$. 

\begin{lemma}
\label{lemma:F_equivalence}
If $z <  f ( d(u,v)/2)$ then $F(z) = F_{vu}(z)$. 
\end{lemma}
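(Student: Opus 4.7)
The plan is to show that the functions $f$ and $f_{vu}$ actually agree pointwise on the integer interval $\{t : 2t < d(u,v)\}$, after which the statement about their inverses follows immediately from strict monotonicity.

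The crucial geometric observation is that the balls $\cN_v(s)$ and $\cN_u(s)$ are disjoint as soon as $2s < d(u,v)$. Indeed, if some vertex $w$ lay in both, the triangle inequality would give $d(u,v) \le d(u,w) + d(w,v) \le 2s$, contradicting $2s < d(u,v)$. Since regular trees and lattices are vertex-transitive, $|\cN_v(s)| = |\cN(s)|$, so disjointness yields $|\cN_v(s) \setminus \cN_u(s)| = |\cN_v(s)| = |\cN(s)|$ whenever $2s < d(u,v)$. Summing over $0 \le s \le t$ for any integer $t$ with $2t < d(u,v)$ gives
$$
f_{vu}(t) \;=\; \sum_{s=0}^t |\cN_v(s) \setminus \cN_u(s)| \;=\; \sum_{s=0}^t |\cN(s)| \;=\; f(t).
$$

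To conclude, suppose $z < f(d(u,v)/2)$ and set $t := F(z)$, so that $f(t) = z$. Because $|\cN(s)| \ge 1$ for every $s$, the function $f$ is strictly increasing, hence $f(t) < f(d(u,v)/2)$ forces $t < d(u,v)/2$, i.e.\ $2t < d(u,v)$. The identity from the previous paragraph then gives $f_{vu}(t) = f(t) = z$, and since $f_{vu}$ is also strictly increasing (by Lemma \ref{lemma:f1_linear_bound}, its increments are at least $1$), its inverse is well-defined and satisfies $F_{vu}(z) = t = F(z)$, which is exactly the claim.

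There is no substantive obstacle here: the lemma is essentially an unpacking of definitions once the disjointness of the two balls is noted. The only points requiring care are (i) using vertex-transitivity to replace $|\cN_v(s)|$ by $|\cN(s)|$, which is valid for both infinite regular trees and infinite lattices, and (ii) ensuring that $F$ is well-defined at $z$, which is guaranteed by the strict monotonicity of $f$ (and, if $z$ is not in the range of $f$, by the standard convention of taking the smallest $t$ with $f(t) \ge z$, for which the same argument applies verbatim).
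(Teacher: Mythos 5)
Your proof is correct and follows essentially the same approach as the paper's: observe that $\cN_v(s)$ and $\cN_u(s)$ are disjoint for $2s < d(u,v)$, conclude $f_{vu}(t) = f(t)$ on that range, and then pass to the inverses. You supply a bit more justification (the triangle inequality for disjointness, explicit use of strict monotonicity of $f$ and $f_{vu}$), but the core argument is identical.
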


\begin{proof}
For $s < d(u,v)/2$, $\cN_v(s)$ and $\cN_u(s)$ are disjoint so $\cN_v(s) \setminus \cN_u(s) = \cN_v(s)$. Hence for $t < d(u,v)/2$,
$$
f_{vu}(t) = \sum\limits_{s = 0}^t | \cN_v(s) \setminus \cN_u(s) | = \sum\limits_{s = 0}^t | \cN_v(s) | = f(t).
$$
It follows that $F_{vu}(z) = F(z)$ if $F(z) < d(u,v) / 2$. Equivalently, $z < f ( d(u,v) / 2)$ which proves the lemma. 
\end{proof}

Finally, at the core of the analysis is the following large-deviations-type result. It essentially follows as a corollary from the large-deviations result Lemma \ref{lemma:llr_chernoff} which was used in the analysis of the Bayesian setting. 

\begin{lemma}
\label{lemma:llr_Z}
Let $u,v \in V$ be any two vertices. For any $x > 0$, 
$$
\p_v \left( Z_{vu}(t) \le (D(Q_0, Q_1) - x) f_{vu}(t) \right) \le e^{- I(x) f_{vu}(t) }.
$$
Above, $D(Q_0, Q_1)$ is the symmetrized Kulback-Liebler divergence between $Q_0$ and $Q_1$, and $I(\cdot)$ is the rate function defined in \eqref{eq:ldp_rate_function}. Moreover, $I(x) > 0$ for $x > 0$. 
\end{lemma}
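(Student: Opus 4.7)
The plan is to reduce the claim to Lemma \ref{lemma:llr_chernoff} by exhibiting, under $\mathbb{P}_v$, the same sum-of-i.i.d.\ representation of $Z_{vu}(t)$ that was used there for $\log(\pi_v(t)/\pi_u(t))$. Indeed, since the prior considered in Lemma \ref{lemma:llr_chernoff} is uniform, the posterior ratio equals the likelihood ratio and we have the pointwise identity $Z_{vu}(t)=\log(\pi_v(t)/\pi_u(t))$. In particular the event appearing in Lemma \ref{lemma:llr_Z} is literally the event appearing in Lemma \ref{lemma:llr_chernoff}, so the bound transfers directly.

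For readers who prefer a self-contained argument, I would rederive the representation from the explicit expansion
\[
Z_{vu}(t) = \sum_{s=0}^{t} \Bigl( \sum_{w \in \cN_v(s)\setminus \cN_u(s)} \log\tfrac{dQ_1}{dQ_0}(y_w(s)) \;-\; \sum_{w \in \cN_u(s)\setminus \cN_v(s)} \log\tfrac{dQ_1}{dQ_0}(y_w(s)) \Bigr)
\]
displayed just before the MSPRT definition. Under $\mathbb{P}_v$ one has $y_w(s)\sim Q_1$ exactly when $w\in\cN_v(s)$, so the first double sum consists of $f_{vu}(t)$ i.i.d.\ copies of $\log\tfrac{dQ_1}{dQ_0}(A)$ with $A\sim Q_1$, while the second consists of $f_{uv}(t)$ i.i.d.\ copies of $\log\tfrac{dQ_1}{dQ_0}(B)$ with $B\sim Q_0$, independent of the first collection. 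Vertex-transitivity of regular trees and lattices yields $f_{uv}(t)=f_{vu}(t)$, so the two collections can be paired into a single sum of $f_{vu}(t)$ i.i.d.\ terms:
\[
Z_{vu}(t) \stackrel{d}{=} \sum_{i=1}^{f_{vu}(t)} \log W_i,\qquad W_i \stackrel{d}{=} \tfrac{dQ_1}{dQ_0}(A)\big/\tfrac{dQ_1}{dQ_0}(B),
\]
with $A,B$ independent and marginals $Q_1,Q_0$ respectively --- precisely the $W_i$'s introduced in the proof of Lemma \ref{lemma:llr_chernoff}.

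From here I would apply the same Chernoff step: for $\lambda\ge 0$, independence of the $W_i$'s gives
\[
\mathbb{P}_v\bigl(Z_{vu}(t)\le (D(Q_0,Q_1)-x)f_{vu}(t)\bigr) \le e^{\lambda(D(Q_0,Q_1)-x)f_{vu}(t)}\bigl(\mathbb{E}[W_1^{-\lambda}]\bigr)^{f_{vu}(t)},
\]
and optimizing over $\lambda\ge 0$ produces the exponent $-I(x)f_{vu}(t)$, by the very definition \eqref{eq:ldp_rate_function} of the rate function. The strict positivity $I(x)>0$ for $x>0$ was already proved in the first part of Lemma \ref{lemma:llr_chernoff} by differentiating the log-moment generating function at $\lambda=0$ (which evaluates to $-D(Q_0,Q_1)$), so it may simply be cited.

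There is essentially no technical obstacle here: the only non-cosmetic step is verifying the identity $f_{uv}(t)=f_{vu}(t)$, which is immediate from the vertex-transitivity of the graphs under consideration. The whole proof is therefore a short reduction to Lemma \ref{lemma:llr_chernoff} and adds no new probabilistic content.
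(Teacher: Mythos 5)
Your proposal is correct and takes essentially the same route as the paper: the paper's proof of Lemma \ref{lemma:llr_Z} is precisely the one-line observation that under the uniform prior $Z_{vu}(t)=\log\bigl(\pi_v(t)/\pi_u(t)\bigr)$, after which the bound is a direct invocation of Lemma \ref{lemma:llr_chernoff}. Your self-contained rederivation of the i.i.d.\ representation and Chernoff step just replays the internals of Lemma \ref{lemma:llr_chernoff}, and your explicit remark that $f_{uv}(t)=f_{vu}(t)$ by vertex-transitivity usefully surfaces a step the paper leaves implicit.
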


\begin{proof}
Recall from the analysis of the Bayesian setting that for any vertex $v \in V_n$, 
$$
\pi_v(t) : = \p(v^* = v \mid {y}(0), \ldots, {y}(t) ).
$$
Hence, by Bayes' rule, 
$$
\frac{\pi_v(t)}{\pi_u(t)} = \frac{d \p_v}{d \p_u}({y}(0), \ldots, {y}(t)) = e^{Z_{vu}(t)}.
$$
The desired result now follows from a direct application of Lemma \ref{lemma:llr_chernoff}. 
\end{proof}

\subsection{Performance of the MSPRT in trees: Proof of Theorem \ref{thm:trees}}
\label{subsec:proof_msprt_trees}

Our first goal is to establish a probabilistic bound for $T_n(v)$ under the measure $\p_v$. We can write 
\begin{align}
\mathbb{P}_v (T_n(v) > t) & \le \mathbb{P}_v \left( \exists u \in V_n \setminus \{v \} \text{ s.t. } Z_{vu}(t) < \log n^2/\alpha \right) \nonumber \\
\label{eq:tnv_bound_1}
& \le \sum\limits_{u \in V_n \setminus \{v \}} \mathbb{P}_v \left( Z_{vu}(t) < \log n^2/\alpha \right).
\end{align}
Next, define $\theta : = {D}(Q_0, Q_1)/2$ and suppose that $t$ is sufficiently large so that 
\begin{equation}
\label{eq:f1_bound1}
\log \frac{n^2}{\alpha} \le \theta f_1(t).
\end{equation}
Letting $I(\cdot)$ be the large-deviations rate function in Lemma \ref{lemma:llr_Z}, we can upper bound the final summation in \eqref{eq:tnv_bound_1} by 
\begin{align}
\sum\limits_{u \in V_n \setminus \{v \}} \p_v  \left( Z_{vu}(t) < \theta f_1(t) \right) & \stackrel{(a)}{\le} \sum\limits_{u \in V_n \setminus \{v\}} \p_v \left( Z_{vu}(t) < \theta  f_{vu}(t) \right) \nonumber \\
& \stackrel{(b)}{\le} \sum\limits_{u \in V_n \setminus \{v \}} \mathrm{exp} \left( - I( \theta ) f_{vu}(t) \right) \nonumber \\
\label{eq:msprt_trees_bound1}
& \stackrel{(c)}{\le} \mathrm{exp} \left( \log n - I( \theta) f_1(t) \right).
\end{align}
Above, $(a)$ is due to $f_1(t) \le f_{vu}(t)$, $(b)$ follows from Lemma \ref{lemma:llr_Z} and $(c)$ is again due to $f_1(t) \le f_{vu}(t)$ and the observation that the summation is over at most $n$ terms. We now define the quantities
$$
C(Q_0, Q_1) : = \min \{ \theta, I(\theta) \} \hspace{0.2cm} \text{and} \hspace{0.2cm} t_n : = F_1 \left( \frac{\log n^2/\alpha}{ C(Q_0, Q_1)} \right).
$$
In particular, if $t \ge t_n$ then \eqref{eq:f1_bound1} holds and $\log n \le I(\theta) f_1(t)$. The expectation of $T_n(v)$ can then be bounded as
\begin{align}
\E_v [ T_n(v) ] & = \sum\limits_{t = 0}^\infty \p_v (T_n(v) > t) \nonumber \\
& \stackrel{(d)}{\le} t_n + \sum\limits_{t = t_n }^\infty \mathrm{exp} \left( \log n - I(\theta) f_1(t) \right) \nonumber \\
& \stackrel{(e)}{\le} t_n + \mathrm{exp}\left( \log n - I(\theta) f_1(t_n) \right) \sum\limits_{s = 0}^\infty e^{ - I(\theta) s} \nonumber \\
\label{eq:expectation_bound}
& \stackrel{(f)}{\le} t_n + \frac{1}{1 - e^{- I(\theta)}}.
\end{align}
Above, $(d)$ is due to the upper bound \eqref{eq:msprt_trees_bound1} on the probabilities in the summation which holds for $t \ge t_n$, $(e)$ uses $f_1(t) - f_1(t_n) \ge t - t_n$ which was proved in Lemma \ref{lemma:f1_linear_bound} and $(f)$ follows from noting $I(\tilde{d}) f_1(t_n) \ge \log n$ and using the geometric sum formula on the summation. Noting that $T_n \le T_n(v)$, \eqref{eq:expectation_bound} implies
$$
\limsup\limits_{n \to \infty} \frac{ \max_{v \in V_n} \E_v [ T_n] }{ t_n} \le \limsup\limits_{n \to \infty} \frac{ \max_{v \in V_n} \E_v [ T_n(v)] }{t_n} \le 1.
$$
From the asymptotic behavior of $F_1$ (see \eqref{eq:msprt_F1_trees}), it follows that 
$$
t_n \sim \frac{\log \log n}{\log (k - 1)},
$$
which proves the desired result.

\subsection{Performance of the MSPRT in lattices: Proof of Theorem \ref{thm:lattices}}
\label{subsec:proof_msprt_lattices}

As in the proof of Theorem \ref{thm:trees}, we begin by bounding $\p_v(T_n(v) > t)$. A union bound yields
\begin{align}
\label{eq:multiscale_terms}
\p_v (T_n(v) > t) & \le \sum\limits_{\substack{u \in V_n : \\ 0 < d(v,u) \le K }} \p_v \left( Z_{vu}(t) < \log \frac{ 2 K | \cN(K) |}{\alpha} \right) \nonumber \\
& \hspace{1cm} + \sum\limits_{\substack{u \in V_n : \\ d(v,u) > K}} \p_v \left( Z_{vu}(t) < \log \frac{2n^2}{\alpha} \right).
\end{align}
To bound the summations above, we first recall a few quantities. Define $\theta : = \widetilde{D}(Q_0, Q_1)/2$ and $C(Q_0, Q_1) : = \min \{ \theta, I(\theta) \}$, where $I(\cdot)$ is the large-deviations rate function used in Lemma \ref{lemma:llr_Z}. Also define 
\begin{align*}
t_{n,1} & : = F_1 \left( \frac{\log 2K| \cN(K) |/\alpha }{C(Q_0, Q_1)}  \right) \\
t_{n,2}  & : = \max\limits_{u \in V_n : d(u,v) > K } F_{vu} \left( \frac{\log 2n^2/\alpha}{C(Q_0, Q_1)} \right).
\end{align*}
For $t \ge t_{n,1}$, we will make use of the following inequalities to bound the first summation in \eqref{eq:multiscale_terms}:
\begin{equation}
\label{eq:tn1_inequalities}
\log \frac{2 K| \cN(K)|}{\alpha} \le \theta f_1(t) \hspace{0.2cm} \text{and} \hspace{0.2cm} \log | \cN(K) | \le I(\theta) f_1(t).
\end{equation}
 Using the first inequality in \eqref{eq:tn1_inequalities}, we can bound the first summation in \eqref{eq:multiscale_terms} by 
\begin{align}
\sum\limits_{\substack{u \in V_n: \\ 0 < d(v,u) \le K }}  \p_v \left( Z_{vu}(t) \le \theta f_1(t) \right) & \stackrel{(a)}{\le} \sum\limits_{\substack{u \in V_n : \\ 0 < d(v,u) \le K}} \p_v \left( Z_{vu}(t) \le \theta f_{vu}(t) \right) \nonumber \\
& \stackrel{(b)}{\le} \sum\limits_{\substack{u \in V_n : \\ 0 < d(v,u) \le K }} e^{ - I(\theta) f_{vu}(t) } \nonumber \\
\label{eq:first_sum_bound}
& \stackrel{(c)}{\le} \mathrm{exp} \left( \log | \cN(K) | - I(\theta) f_1(t) \right).
\end{align}
Above, $(a)$ and $(c)$ are due to $f_1(t) \le f_{vu}(t)$, and $(b)$ follows from Lemma \ref{lemma:llr_Z}. Similarly, for $t \ge t_{n,2}$ we have, for all $u \in V_n$ satisfying $d(u,v) > K$, 
\begin{equation}
\label{eq:tn2_inequalities}
\log \frac{2n^2}{\alpha} \le \theta f_{vu}(t) \qquad \text{and} \qquad \log n \le I(\theta) f_{vu}(t).
\end{equation}
Using the same reasoning as in \eqref{eq:first_sum_bound}, we have the following bound on the second summation in \eqref{eq:multiscale_terms} for $t \ge t_{n,2}$: 
\begin{equation}
\label{eq:second_sum_bound}
\mathrm{exp} \left( \log n - I(\theta) \min\limits_{u \in V_n : d(u,v) > K} f_{vu}(t) \right).
\end{equation}
Plugging in the bounds \eqref{eq:first_sum_bound} and \eqref{eq:second_sum_bound} into \eqref{eq:multiscale_terms} shows that, for $t \ge \max\{t_{n,1}, t_{n,2} \} = :t_n$, 
\begin{equation}
\label{eq:lattices_final_prob_bound}
\p_v (T_n(v) > t) \le \mathrm{exp} \left( \log | \cN(K) | - I ( \theta) f_1(t) \right) \\
+ \mathrm{exp} \left( \log n - I (\theta) \min\limits_{u \in V_n : d(v,u) > K } f_{vu}(t) \right).
\end{equation}
Next, define the quantities
\begin{align*}
A_n & := \log | \cN(K) | - I(\theta) f_1(t_n) \\
B_n & := \log n - I(\theta) \min_{u \in V_n : d(u,v) > K} f_{vu}(t_n),
\end{align*}
and notice that $A_n, B_n \le 0$ in light of the second inequalities in \eqref{eq:tn1_inequalities} and \eqref{eq:tn2_inequalities}. Using the relation $\E_v [ T_n(v) ] = \sum_{t = 0}^\infty \p_v (T_n(v) > t) \le t_n + \sum_{t = t_n}^\infty \p_v ( T_n(v) > t)$, we have
\begin{align}
\E_v [ T_n(v) ] & \stackrel{(d)}{\le} t_n + \sum\limits_{t = t_n}^\infty e^{ \log | \cN(K) | - I(\theta) f_1(t) } + \sum\limits_{t = t_n}^\infty e^{ \log n - I(\theta) \min_{u \in V_n : d(u,v) > K} f_{vu}(t) } \nonumber \\
& \stackrel{(e)}{\le} t_n + \left( e^{ A_n } + e^{ B_n } \right) \sum\limits_{s = 0}^\infty e^{- I(\theta) s} \nonumber \\
\label{eq:lattices_expectation_bound}
& \stackrel{(f)}{\le} t_n + \frac{1}{1 - e^{- I(\theta)}}. 
\end{align}
Above, $(d)$ is a consequence of \eqref{eq:lattices_final_prob_bound}, $(e)$ is due to the inequality $f_{vu}(t') - f_{vu}(t) \ge t' - t$ which was proved in Lemma \ref{lemma:f1_linear_bound}, and $(f)$ holds since $A_n. B_n \le 0$ and by applying the geometric sum formula. Next, using the inequality $T_n \le T_n(v)$, \eqref{eq:lattices_expectation_bound} implies
$$
\limsup\limits_{n \to \infty} \frac{ \max_{v \in V_n} \E_v [ T_n] }{t_n} \le \limsup\limits_{n \to \infty } \frac{ \max_{v \in V_n} \E_v [ T_n(v)] }{t_n} \le 1. 
$$
It remains to study the asymptotics of $t_n$ as $n$ grows large. From the asymptotic behavior of $|\cN(t) |$ \eqref{eq:msprt_asymptotics_lattices}, we have $| \cN(K) | \asymp K^\ell = \log n$. Hence 
\begin{equation}
\label{eq:tn1_asymptotics}
t_{n,1} \asymp F_1 \left( \frac{\log \log n}{C(Q_0, Q_1)} \right) = O (\log \log n),
\end{equation}
where the final big-$O$ bound is due to the inequality $F_1(z) \le z$, proved in Lemma \ref{lemma:F1_linear_bound}. Next, we establish the asymptotic behavior of $t_{n,2}$. We have, for $n$ sufficiently large, 
\begin{equation}
\label{eq:f(K)}
f \left( \frac{K}{2} \right) \asymp (\log n)^{1 + \frac{1}{\ell}} \ge \frac{\log 2n^2/\alpha}{C(Q_0, Q_1)},
\end{equation}
where the asymptotic behavior of $f(K/2)$ follows from \eqref{eq:msprt_asymptotics_lattices} and the second inequality holds for $n$ sufficiently large. Equation \ref{eq:f(K)} satisfies the condition of Lemma \ref{lemma:F_equivalence}, so we have
\begin{equation}
\label{eq:tn2_asymptotics}
t_{n,2} = F \left( \frac{\log 2n^2/\alpha}{C(Q_0, Q_1)} \right) \asymp (\log n)^{\frac{1}{\ell + 1}}.
\end{equation}
Above, the asymptotic behavior of $F$ follows from \eqref{eq:msprt_asymptotics_lattices}. Hence $t_n \asymp (\log n)^{\frac{1}{\ell + 1}}$, which proves the theorem.

\section{Conclusion and future directions}
\label{sec:conclusion}

In this paper, we considered the problem of quickest estimation of a cascade source from noisy information. We studied a Bayesian and minimax formulation of this problem and derived optimal estimators in the regime of large networks under simple cascade dynamics and network topologies. Furthermore, our results exposed the interplay between the network topology and the performance of optimal estimators. 

There remain several avenues for future work. Although we examined simple networks and cascade dynamics for mathematical tractability, in important next step is to study source estimation for more realistic networks and cascade dynamics \cite{cascades_review, configuration_model, random_geometric_graphs, BA99, Mah92,BA99,BRST01}. We remark that in many cascade models, the cascade evolution is {\it non-deterministic}, hence it will be difficult to compute the the estimators proposed in this paper. We expect that tractable relaxations of the estimators we consider may be more amenable to the analysis of more complex scenarios. 

Another exciting future direction is sampling with {\it incomplete} information. In this work we assumed that all public signals at a given point in time are observable, but when the network is large this may be infeasible. A natural question of interest is to characterize optimal source estimators given that only a budget of $B$ public signals can be observed at any timestep. There are many possibilities for choosing the $B$ signals to observe: one may target potential super-spreaders (i.e., high-degree vertices) or choose vertices adaptively.

\appendix

\section{Bounds on the size of neighborhoods}
\label{sec:size_of_neighborhoods}

We begin by defining and recalling some notation. Given a graph $G$, a vertex $v$, and a non-negative integer $t$, we define 
\begin{align*}
\partial \cN_v(t)  &: = \{ u \in V: d(u,v) = t \} \\
\cN_v(t) & := \{ u \in V: d(u,v) \le t \} .
\end{align*}
Since $G$ is vertex-transitive, $| \partial \cN_v(t)|$ does not depend on $v \in V$; for brevity of notation, we will therefore write $| \partial \cN(t) |$. The same holds for $| \cN_v(t)|$, which we will often write as $| \cN(t) |$. Additionally recall the {\it neighborhood growth function} 
$$
f(t) : = \sum\limits_{s = 0}^t | \cN(s) |
$$
as well as 
$$
f_1(t) : = \sum\limits_{s = 0}^t | \cN_u(s) \setminus \cN_v(s) |,
$$
where $u,v$ are adjacent vertices. As explained earlier, due to the vertex-transitivity of the underlying graph, the formula for $f_1(t)$ is the same for {\it any} pair of adjacent vertices. We also define $F_1 = f_1^{-1}$.

The following result provides exact formulas for $| \partial \cN(t)|, | \cN(t)|$ as well as asymptotic behavior for $f(t), f_1(t), F(z)$ and $F_1(z)$ in regular trees. 

\begin{lemma}
\label{lemma:trees_exact}
Let $G$ be a $k$-regular tree with $k \ge 3$. Then 
\begin{align}
\label{eq:trees_partialN}
| \partial \cN(t) | & = \begin{cases}
1 & t = 0 \\
k(k - 1)^{t - 1} & t \ge 1;
\end{cases} \\
\label{eq:trees_N}
| \cN(t) | & = 1 + \frac{k}{k - 2} \left( (k - 1)^t - 1 \right); \\
\label{eq:trees_f1}
f_1(t) & \sim \frac{ (k - 1)^{t + 1}}{k - 2}; \\
\label{eq:trees_f}
f(t) & \sim   \frac{k }{(k - 2)^2}  (k - 1)^{t + 1}; \\
\label{eq:trees_F1}
F_1(z) & \sim \frac{\log n}{\log (k - 1)} \\
\label{eq:trees_F}
 F(z) & \sim \frac{\log z}{\log (k - 1)}.
\end{align}
\end{lemma}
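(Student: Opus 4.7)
The plan is to treat the six claims sequentially using straightforward combinatorial enumeration driven by the branching structure of $\cT_k$. The key fact I would exploit throughout is that $\cT_k$ is a tree with a unique path between any two vertices, and that each vertex other than the root of a rooted subtree has exactly one ``parent'' neighbor.

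First, for \eqref{eq:trees_partialN} I would proceed by induction on $t$. The case $t=0,1$ is immediate from the definition of $\cT_k$. For the inductive step, observe that each vertex $w \in \partial \cN(t)$ has exactly one neighbor at distance $t-1$ from the reference vertex (its parent on the unique geodesic) and therefore $k-1$ neighbors at distance $t+1$; moreover, distinct vertices in $\partial \cN(t+1)$ have distinct parents in $\partial \cN(t)$ because the graph is a tree. Hence $|\partial \cN(t+1)| = (k-1)|\partial \cN(t)|$, giving the formula. Then \eqref{eq:trees_N} is immediate by summing the geometric series $|\cN(t)| = 1 + \sum_{s=1}^t k(k-1)^{s-1}$.

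For \eqref{eq:trees_f1}, fix adjacent $a,b$ and note that deleting the edge $(a,b)$ disconnects $\cT_k$ into two subtrees; let $T_a$ be the component containing $a$. For any $w$, the unique $(b,w)$-path in $\cT_k$ passes through $a$ iff $w \in T_a$, so $d(b,w) = 1 + d(a,w)$ in that case and $d(b,w) = d(a,w) - 1$ otherwise. Combining $d(a,w) \le s$ and $d(b,w) > s$, one finds $w \in \cN_a(s) \setminus \cN_b(s)$ iff $w \in T_a$ and $d(a,w) = s$. Since $T_a$ is rooted at $a$ with $a$ having $k-1$ children and each descendant having $k-1$ children, there are exactly $(k-1)^s$ vertices at depth $s$, so
$$
f_1(t) = \sum_{s=0}^t (k-1)^s = \frac{(k-1)^{t+1} - 1}{k-2} \sim \frac{(k-1)^{t+1}}{k-2},
$$
which is \eqref{eq:trees_f1}. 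For \eqref{eq:trees_f}, substitute \eqref{eq:trees_N} into $f(t) = \sum_{s=0}^t |\cN(s)|$; the linear-in-$t$ and constant contributions are dominated by the geometric term, giving $f(t) \sim \frac{k(k-1)^{t+1}}{(k-2)^2}$.

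Finally, for \eqref{eq:trees_F1} and \eqref{eq:trees_F}, take logarithms: both $\log f_1(t)$ and $\log f(t)$ equal $(t+1)\log(k-1) + O(1)$ as $t \to \infty$. Setting $z = f_1(t)$ (resp.\ $z = f(t)$) and solving for $t$ yields $t = \log z / \log(k-1) + O(1)$, which gives the stated first-order asymptotics for $F_1$ and $F$. There is no substantive obstacle here; the only subtlety is the identification in \eqref{eq:trees_f1} of $\cN_a(s) \setminus \cN_b(s)$ as a \emph{level set} $\{w \in T_a : d(a,w) = s\}$ rather than an entire ball, which is what makes the partial sums $f_1(t)$ grow like $(k-1)^t$ instead of like $(k-1)^{2t}$.
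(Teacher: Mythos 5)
Your proposal is correct and follows essentially the same route as the paper: root the tree to count $|\partial\cN(t)|$, sum geometric series for $|\cN(t)|$ and $f(t)$, identify $\cN_a(s)\setminus\cN_b(s)$ (for adjacent $a,b$) as a single level set of one of the two subtrees obtained by deleting the edge $(a,b)$, and invert via logarithms. The only cosmetic difference is that the paper phrases the $f_1$ computation in terms of a partition $S_1,\dots,S_k$ of the sphere $\partial\cN_u(s)$ indexed by the neighbors of $u$ rather than the edge-deletion picture you use, and for $F$ the paper writes out explicit two-sided bounds ($\frac{\log z}{\log(k-1)} - 3 \le F(z) \le \frac{\log z}{\log(k-1)}$) where you invoke the $O(1)$ absorption directly; both are sound.
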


\begin{proof}
Fix an arbitrary vertex $v$, and suppose we root $G$ at $v$ so that $|\partial \cN(t) |$ is the number of children at height $t$ from the root. Since $\partial \cN_v(0) = \{v\}$ and the root node $v$ is the only vertex with $k$ children while all others have $k - 1$ children, we have the formula $| \partial \cN(t) | = k (k - 1)^{t - 1}$ for $t \ge 1$. 

Next, to compute $| \cN(t)|$, we use the formula for $| \partial \cN(t)|$ and the geometric sum formula:
\begin{align*}
| \cN(t) | & = \sum\limits_{s = 0}^t | \partial \cN(s) |  = 1 + k \sum\limits_{s = 0}^{t-1} (k-1)^s \\
& = 1 + \frac{k}{k - 2} \left( (k - 1)^t - 1 \right).
\end{align*}

The same techniques can be used to derive $f(t)$: 
\begin{align*}
f(t) & = \sum\limits_{s = 0}^t | \cN(s) | \\
& = \sum\limits_{s = 0}^t \left( 1 + \frac{k}{k - 2} \left( (k - 1)^s - 1 \right) \right) \\
& = - \frac{2(t + 1)}{k - 2} + \frac{k}{k - 2} \sum\limits_{s = 0}^t (k - 1)^s \\
& = - \frac{2(t + 1)}{k - 2} + \frac{k }{(k - 2)^2} \left( (k - 1)^{t + 1} - 1 \right).
\end{align*}
To compute $f_1(t)$, we start by computing $|\cN_v(s) \setminus \cN_u(s)|$. Let $u_1, \ldots, u_k$ be the neighbors of $u$ in $G$ and let $S_1, \ldots, S_k$ be a partition of the vertices exactly distance $s$ from $u$, such that the path connecting $u$ and a vertex in $S_i$ must cross $u_i$. Simple counting arguments show that $|S_i| = (k-1)^{s-1}$ for each $i$, and that if we assume without loss of generality that $u_1 = v$, 
$$
|\cN_u(s) \setminus \cN_v(s) | = |S_2 \cup S_3 \cup \ldots \cup S_k| = (k-1)^s.
$$
Hence we have
$$
f_1(t) = \sum\limits_{s = 0}^t (k-1)^s = \frac{(k-1)^{t + 1} - 1}{k - 2}.
$$
The first-order behavior of $F_1 = f_1^{-1}$ is a direct consequence.

We now study $F(z)$, the inverse function of $f$. Substituting $t = \frac{\log z}{\log (k - 1)}$ in the formula for $f(t)$, we have
\begin{align*}
f  \left( \frac{\log z}{\log (k - 1)} \right) & = \frac{k}{(k - 2)^2} \left( (k - 1)^{\frac{\log z}{\log(k - 1)} + 1} - 1 \right)  - \frac{2}{k - 2} \left( \frac{\log z}{\log (k - 1)} + 1 \right) \\
& = \frac{k(k-1)}{(k - 2)^2} z - \frac{k}{(k - 2)^2}  - \frac{2}{k - 2} \left( \frac{\log z}{\log (k - 1)} + 1 \right) 
\end{align*}
Since $\frac{k(k - 1)}{(k - 2)^2} > 1$, we have for $z$ sufficiently large (in particular, $z$ is larger than some function of $k$ alone) that $f \left( \frac{\log z}{\log (k - 1)} \right) \ge z$, which is equivalent to $F(z) \le \frac{\log z}{\log (k - 1)}$. On the other hand, 
\begin{align*}
f \left( \frac{\log z}{\log (k - 1)} - 3 \right) & = \frac{k}{(k - 1)^2 (k - 2)^2} z - \frac{k}{(k - 2)^2}  - \frac{2}{k - 2} \left( \frac{\log z}{\log (k - 1)} - 3 \right) \\
& \le \frac{k}{(k - 1)^2 (k - 2)^2} z \\
& \le \frac{k}{(k - 1)^2} z.
\end{align*}
Since $k < (k - 1)^2$ for $k \ge 3$, we have $f \left( \frac{\log z}{\log (k - 1)} - 3 \right) \le z$, which in turn implies that $F(z) \ge \frac{\log z}{\log (k - 1)} - 3$. 
\end{proof}

A useful corollary of \eqref{eq:trees_N} is a characterization of $r_n$ in $k$-regular trees. 

\begin{corollary}
\label{cor:trees_rn}
Let $G$ be a $k$-regular tree and let $\{V_n \}_{n \ge 1}$ be a sequence of candidate sets satisfying Assumption \ref{as:candidate_set}. Then 
$$
r_n = \left \lfloor \frac{ \log \left( \frac{k-2}{k} (n-1) + 1 \right) }{ \log (k - 1) } \right \rfloor \sim \frac{ \log n}{ \log (k -1 )}.
$$
\end{corollary}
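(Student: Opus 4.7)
The approach is to translate Assumption \ref{as:candidate_set} into an explicit inequality on $r_n$ and then invert the exact formula for $|\cN(t)|$ given in \eqref{eq:trees_N}.

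The plan is as follows. By Assumption \ref{as:candidate_set} we have $\cN_{v_0}(r_n) \subseteq V_n \subset \cN_{v_0}(r_n+1)$, so taking cardinalities gives $|\cN(r_n)| \le n$ and $n < |\cN(r_n+1)|$, and therefore $r_n$ is the largest non-negative integer $m$ for which $|\cN(m)| \le n$. Substituting the explicit formula $|\cN(m)| = 1 + \tfrac{k}{k-2}\big((k-1)^m - 1\big)$ from \eqref{eq:trees_N}, the inequality $|\cN(m)| \le n$ is equivalent (since $k \ge 3$) to
$$
(k-1)^m \le \frac{k-2}{k}(n-1) + 1,
$$
which in turn is equivalent to
$$
m \le \frac{\log\!\left(\frac{k-2}{k}(n-1) + 1\right)}{\log(k-1)}.
$$
Taking the largest such integer $m$ yields the claimed exact formula for $r_n$.

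For the asymptotic statement, I would observe that as $n \to \infty$ we have $\frac{k-2}{k}(n-1) + 1 \sim \frac{k-2}{k}\, n$, so
$$
\log\!\left(\frac{k-2}{k}(n-1) + 1\right) = \log n + \log \frac{k-2}{k} + o(1) \sim \log n.
$$
Dividing by $\log(k-1)$ and noting that removing the floor changes the value by at most $1$, which is negligible compared to $\log n / \log(k-1)$, gives $r_n \sim \frac{\log n}{\log(k-1)}$.

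There is no real obstacle here: the argument is purely a combinatorial inversion of the closed-form expression for $|\cN(m)|$. The only minor care needed is to verify that the $\subset$ in Assumption \ref{as:candidate_set} is consistent with the equality case ($n = |\cN(r_n+1)|$ is handled by the same floor identity, since then $r_n+1$ still satisfies $|\cN(r_n+1)| \le n$ only with equality, so the floor picks out $r_n+1$ rather than $r_n$; if strict inclusion is intended, this matches the floor definition exactly).
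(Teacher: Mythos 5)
Your argument is correct, and since the paper states Corollary~\ref{cor:trees_rn} without proof (it is presented as an immediate consequence of~\eqref{eq:trees_N}), yours is exactly the computation the authors are implicitly relying on: read Assumption~\ref{as:candidate_set} as $|\cN(r_n)| \le n < |\cN(r_n+1)|$, invert the closed form $|\cN(m)| = 1 + \tfrac{k}{k-2}\big((k-1)^m - 1\big)$ via logarithms, and observe that the floor changes nothing asymptotically. Your closing caveat about the strictness of the second inclusion is well taken --- the exact identity $r_n = \lfloor \cdot \rfloor$ does require $n < |\cN(r_n+1)|$, i.e.\ a proper inclusion $V_n \subsetneq \cN_{v_0}(r_n+1)$; if equality were permitted, $r_n$ would not be uniquely pinned down when $n = |\cN(m)|$ for some $m$, though the asymptotic $r_n \sim \log n/\log(k-1)$ would of course survive. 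Your parenthetical explanation of that edge case is somewhat muddled, but the main derivation is clean and complete.
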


The following lemma computes the asymptotic behavior for $|\partial \cN(t)|, |\cN(t)|, f(t)$ and $F(z)$ in lattices. 

\begin{lemma}
\label{lemma:lattices_exact}
Let $G$ be a $\ell$-dimensional lattice. Then
\begin{align}
\label{eq:lattice_partialN}
| \partial \cN(t) | & \sim \frac{2^\ell}{(\ell - 1)!} t^{\ell - 1}; \\
\label{eq:lattice_N}
| \cN(t) | & \sim \frac{2^\ell}{ \ell!} t^\ell; \\
\label{eq:f_N}
f(t) & \sim \frac{2^\ell}{ (\ell + 1)!} t^{\ell + 1}; \\
\label{eq:F_N}
F(z) & \sim \left( \frac{(\ell + 1)!}{2^\ell} z \right)^{\frac{1}{\ell + 1}}.
\end{align}
\end{lemma}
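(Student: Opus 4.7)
The plan is to establish equation \eqref{eq:lattice_N} first, and then deduce the remaining three assertions from it by elementary calculus-type manipulations. To prove $|\cN(t)| \sim \frac{2^\ell}{\ell!} t^\ell$, I would proceed by induction on the dimension $\ell$. Write $N_\ell(t) = |\cN(t)|$ to make the dimension explicit. The base case $\ell = 1$ is trivial since $\cN(t) = \{-t, -t+1, \ldots, t\}$ gives $N_1(t) = 2t + 1 \sim 2t$. For the inductive step, condition on the value of the last coordinate: a point $(x_1, \ldots, x_\ell)$ lies in $\cN(t)$ iff $|x_\ell| \le t$ and $(x_1, \ldots, x_{\ell-1})$ lies in the $(\ell-1)$-dimensional $\ell^1$-ball of radius $t - |x_\ell|$. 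This yields the recursion
\[
N_\ell(t) \;=\; \sum_{k = -t}^{t} N_{\ell-1}(t - |k|) \;=\; N_{\ell-1}(t) + 2 \sum_{j = 0}^{t-1} N_{\ell-1}(j).
\]
Applying the inductive hypothesis $N_{\ell-1}(j) \sim \frac{2^{\ell-1}}{(\ell-1)!} j^{\ell-1}$ and comparing the sum to the corresponding Riemann integral gives
\[
N_\ell(t) \;\sim\; 2 \int_0^{t} \frac{2^{\ell-1}}{(\ell-1)!} x^{\ell-1} \, dx \;=\; \frac{2^\ell}{\ell!} t^\ell,
\]
completing the induction.

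Once \eqref{eq:lattice_N} is established, the remaining three assertions follow by direct computation. For \eqref{eq:lattice_partialN}, use the telescoping identity $|\partial \cN(t)| = N_\ell(t) - N_\ell(t-1)$; a binomial expansion combined with the asymptotic from the previous step yields
\[
|\partial \cN(t)| \;\sim\; \frac{2^\ell}{\ell!}\bigl(t^\ell - (t-1)^\ell\bigr) \;\sim\; \frac{2^\ell}{(\ell-1)!} t^{\ell-1}.
\]
For \eqref{eq:f_N}, the same Riemann-sum comparison applied to $f(t) = \sum_{s=0}^t N_\ell(s)$ gives $f(t) \sim \int_0^t \frac{2^\ell}{\ell!} x^\ell \, dx = \frac{2^\ell}{(\ell+1)!} t^{\ell+1}$. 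Finally, \eqref{eq:F_N} follows by inverting this last asymptotic: solving $z \sim \frac{2^\ell}{(\ell+1)!} t^{\ell+1}$ for $t$ yields $F(z) \sim \bigl(\frac{(\ell+1)!}{2^\ell} z\bigr)^{1/(\ell+1)}$.

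The only point requiring care is the Riemann-sum comparison in the inductive step, where a pointwise equivalence $N_{\ell-1}(j) \sim c_{\ell-1} j^{\ell-1}$ must be upgraded to an equivalence for the partial sum $\sum_{j=0}^{t-1} N_{\ell-1}(j)$. This is standard but one should either carry an explicit $1 + o(1)$ factor through the induction, or sandwich the sum between Riemann integrals of $c_{\ell-1} x^{\ell-1}$ on $[0, t]$ and $[0, t+1]$; the resulting cumulative error is of order $t^{\ell-1}$, which is strictly lower order than the leading $t^\ell$ and is therefore absorbed. Everything else is a routine calculus-style manipulation with no real difficulty.
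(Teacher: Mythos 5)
Your dimensional recursion $N_\ell(t) = N_{\ell-1}(t) + 2\sum_{j=0}^{t-1} N_{\ell-1}(j)$ is a genuinely different route from the paper, which instead enumerates $\partial\cN(t)$ directly by partitioning lattice points on the $\ell^1$-sphere according to their number of nonzero coordinates, deriving the exact formula $|\cS_k(t)| = 2^k\binom{\ell}{k}\binom{t-1}{k-1}$, and then summing upward to get $|\cN(t)|$ and $f(t)$. Both routes soundly establish \eqref{eq:lattice_N}, \eqref{eq:f_N} and \eqref{eq:F_N}, since cumulative summation of a positive sequence tending to infinity does preserve asymptotic equivalence (the easy Ces\`aro--Stolz direction), which is all the Riemann-sum comparison needs.

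However, the step deducing \eqref{eq:lattice_partialN} from \eqref{eq:lattice_N} has a genuine gap. You write $|\partial\cN(t)| = N_\ell(t) - N_\ell(t-1) \sim \frac{2^\ell}{\ell!}\bigl(t^\ell - (t-1)^\ell\bigr)$, but this is a differencing step, and asymptotic equivalence of a sequence does \emph{not} transfer to its first differences. For instance, $S(t) = ct^\ell + (-1)^t t^{\ell-1}$ satisfies $S(t) \sim ct^\ell$ yet $S(t)-S(t-1)$ carries an oscillating contribution of order $t^{\ell-1}$, exactly the order of the answer you are trying to extract. Your induction proves only the asymptotic $N_\ell(t) \sim \frac{2^\ell}{\ell!}t^\ell$ and does not rule out such fluctuations, so the differencing is unjustified as written. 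The paper sidesteps this by computing $|\partial\cN(t)|$ exactly first and only ever summing, never differencing. Your argument repairs easily: either (i) strengthen the inductive statement to ``$N_\ell(t)$ is a polynomial in $t$ of degree $\ell$ with leading coefficient $\frac{2^\ell}{\ell!}$,'' which the recursion preserves since partial sums of polynomials are polynomials, after which differencing is legitimate; or (ii) note that $\partial N_\ell$ satisfies the identical recursion $\partial N_\ell(t) = \partial N_{\ell-1}(t) + 2\sum_{j=0}^{t-1}\partial N_{\ell-1}(j)$ with base case $\partial N_1(t) = 2$ for $t\ge 1$, and run your Riemann-sum induction directly on $\partial N_\ell$ rather than extracting it from $N_\ell$ afterward.
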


\begin{proof}
Recall that $\Z$ is the set of integers. For a vector $x \in \Z$, let $\norm{x}_0$ denote the number of nonzero entries of $x$ and let $\norm{x}_1$ denote the $\ell_1$ norm of $x$. For every integer $1 \le k \le \ell$ and an integer $t \ge 0$, define the set 
$$
\cS_k(t) : = \left \{ x \in \Z^d: \norm{x}_0 = k \text{ and } \norm{x}_1 = t \right \}.
$$
Since the $\cS_k(t)$'s partition $\partial \cN_0(t)$, we have $| \partial \cN(t) | = \sum_{k = 1}^\ell | \cS_k(t)|$. We proceed by computing the size of $|\cS_k(t)|$ via combinatorial arguments. First, we choose the $k$ nonzero coordinates of a vector in $\cS_k(t)$; this can be done in ${\ell \choose k}$ ways. Next, note that the number of positive integer solutions to $y_1 + \ldots + y_k = t$ is exactly ${t - 1 \choose k - 1}$ if $t \ge k$ else it is 0; this can be seen through standard counting arguments. Now, since the number of vectors in $\cS_k(t)$ for which the absolute value of the entries are given by $y_1, \ldots, y_k$ (in that order) is $2^k$ (since each nonzero entry of $x$ can be positive or negative), we may put everything together to obtain 
$$
| \cS_k(t) | = 2^k {\ell \choose k} {t - 1 \choose k - 1} \text{ if $t \ge k$, else 0.}
$$
When $t$ is large, the first-order term of $|S_k(t)|$ is $\frac{2^k}{(k-1)!} {\ell \choose k} t^{k - 1}$. It follows that 
\begin{equation}
\label{eq:partialN_first_order}
| \partial \cN(t) | \sim |\cS_\ell(t) | \sim \frac{2^\ell}{(\ell-1)!} t^{\ell - 1}. 
\end{equation}
Next, we use \eqref{eq:partialN_first_order} to obtain the first-order behavior of $| \cN(t) |$. To this end, we first note that for any $p \ge 0$, approximating a summation by an integral gives
\begin{align*}
\frac{1}{p + 1} & \left( k_1^{p + 1} - (k_0 - 1)^{p + 1} \right) = \int_{k_0 - 1}^{k_1} s^p ds \le \sum\limits_{k = k_0}^{k_1} k^p \\
& \le \int_{k_0}^{k_1 + 1} s^p ds = \frac{1}{p + 1} \left( (k_1 + 1)^{p + 1} - k_0^{p + 1} \right).
\end{align*}
In particular, when $k_1$ is much larger than $k_0$, 
$$
\sum\limits_{k = k_0}^{k_1} k^p \sim \frac{1}{p + 1} k_1^{p + 1}.
$$
The first-order term of $|\cN(t)|$ is therefore 
$$
| \cN(t) | = \sum\limits_{s = 0}^t | \partial \cN(t) | \sim \sum\limits_{s = \ell}^t \frac{2^\ell}{(\ell-1)!} s^{\ell - 1} \sim \frac{2^\ell}{\ell!} t^\ell. 
$$
Through analogous arguments, $f(t) \sim \frac{2^\ell}{(\ell + 1)!} t^{\ell + 1}$. The first order behavior of $F$ is an immediate consequence. 
\end{proof}

A useful corollary of \eqref{eq:lattice_N} is a characterization of $r_n$ in $\ell$-dimensional lattices. 

\begin{corollary}
\label{cor:lattice_rn}
Let $G$ be a $\ell$-dimensional lattice $\{V_n \}_{n \ge 1}$ be a sequence of candidate sets satisfying Assumption \ref{as:candidate_set}. Then 
$$
r_n \sim \left( \frac{\ell!}{2^\ell} n \right)^{1/\ell}.
$$
\end{corollary}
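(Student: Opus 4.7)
The plan is to extract $r_n$ by sandwiching $n$ between two neighborhood sizes and inverting the asymptotic from Lemma \ref{lemma:lattices_exact}. By the second condition of Assumption \ref{as:candidate_set}, we have $\cN_{v_0}(r_n) \subseteq V_n \subset \cN_{v_0}(r_n+1)$. Since $|V_n| = n$ and all three sets are finite, taking cardinalities yields
\[
|\cN(r_n)| \;\le\; n \;<\; |\cN(r_n+1)|,
\]
where we have used vertex-transitivity so that $|\cN_{v_0}(t)| = |\cN(t)|$ does not depend on $v_0$.

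Next, I would invoke \eqref{eq:lattice_N}, which asserts $|\cN(t)| \sim \tfrac{2^\ell}{\ell!}\, t^\ell$ as $t \to \infty$. To apply this, one first observes $r_n \to \infty$: if $r_n$ stayed bounded along a subsequence, then $n < |\cN(r_n+1)|$ would also stay bounded, contradicting $n \to \infty$. Substituting the asymptotic into the sandwich then gives, for any fixed $\eps > 0$ and all $n$ large,
\[
(1-\eps)\frac{2^\ell}{\ell!}\, r_n^\ell \;\le\; n \;\le\; (1+\eps)\frac{2^\ell}{\ell!}\, (r_n+1)^\ell.
\]

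Taking $\ell$-th roots and rearranging yields
\[
\frac{r_n}{(\ell!\, n /2^\ell)^{1/\ell}} \;\le\; (1-\eps)^{-1/\ell}
\qquad\text{and}\qquad
\frac{r_n+1}{(\ell!\, n /2^\ell)^{1/\ell}} \;\ge\; (1+\eps)^{-1/\ell}.
\]
Since $r_n \to \infty$, we have $(r_n+1)/r_n \to 1$, so both ratios above have the same limit. Letting $n \to \infty$ and then $\eps \to 0$ gives $r_n / (\ell!\, n/2^\ell)^{1/\ell} \to 1$, which is exactly the claim $r_n \sim (\ell!\, n / 2^\ell)^{1/\ell}$. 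There is no real obstacle here — the result is a direct corollary of the cardinality asymptotic \eqref{eq:lattice_N} already established in Lemma \ref{lemma:lattices_exact}, and the only mild care required is verifying that $r_n \to \infty$ before invoking the asymptotic (which follows immediately from the upper sandwich bound on $n$).
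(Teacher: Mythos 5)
Your proof is correct and is essentially what the paper leaves implicit: the paper states this corollary without proof, as an immediate consequence of the cardinality asymptotic $|\cN(t)| \sim \frac{2^\ell}{\ell!}t^\ell$ from Lemma~\ref{lemma:lattices_exact}. Your sandwich $|\cN(r_n)| \le n \le |\cN(r_n+1)|$ followed by inverting the asymptotic (after noting $r_n \to \infty$) is exactly the intended argument, mirroring the analogous Corollary~\ref{cor:trees_rn} for trees.
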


\section{Summations of geodesics}
\label{sec:geodesics}

The goal of this section is to bound summations of the form $\sum_{w \in V_n} d(w, v)$ and $\sum_{w \in V_n} d(w, v)^2$, which are useful in studying the Bayesian formulation of the quickest source estimation problem. 

\subsection{Regular trees}

We begin by proving a few intermediate results. The following lemma will provide a useful lower bound for $\sum_{w \in V_n} d(w,v)$. 

\begin{lemma}
\label{lemma:tree_distance_comparison}
Let $G$ be a regular tree, let $v_0 \in V$ and let $r$ be a positive integer. Then for all $v \in \cN_{v_0}(r)$, 
$$
\sum\limits_{w \in \cN_{v_0}(r)} d(w, v) \ge \sum\limits_{w \in \cN_{v_0}(r)} d(w, v_0). 
$$
\end{lemma}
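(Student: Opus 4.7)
The plan is to argue by induction on $j := d(v_0, v)$. The base case $j = 0$ is trivial (both sides coincide). For the inductive step, I pick $v'$ to be the neighbor of $v$ lying on the unique $v_0 \leftrightarrow v$ path, so $d(v_0, v') = j - 1$. By the inductive hypothesis applied to $v'$, it is enough to establish the monotone step
\[
\sum_{w \in \cN_{v_0}(r)} d(w, v) \;\ge\; \sum_{w \in \cN_{v_0}(r)} d(w, v').
\]

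To prove this step, I exploit the tree structure. Removing the edge $(v', v)$ disconnects $G$ into two subtrees $T_{v'}$ (containing $v_0$) and $T_v$ (containing $v$). For any $w \in T_{v'}$, every $v_0 \leftrightarrow w$ path avoids $v$, so $d(w, v) = d(w, v') + 1$; for $w \in T_v$, the path must cross $(v', v)$, giving $d(w, v) = d(w, v') - 1$. Therefore
\[
\sum_{w \in \cN_{v_0}(r)} \bigl[d(w, v) - d(w, v')\bigr] \;=\; \bigl|T_{v'} \cap \cN_{v_0}(r)\bigr| \;-\; \bigl|T_v \cap \cN_{v_0}(r)\bigr|,
\]
so it suffices to show $|T_{v'} \cap \cN_{v_0}(r)| \ge |T_v \cap \cN_{v_0}(r)|$.

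This is a direct counting computation using the regularity of the tree. For $w \in T_v$, the $v_0 \leftrightarrow w$ path passes through $v$, so $d(v_0, w) = j + d(v, w)$; hence $w \in \cN_{v_0}(r)$ iff $d(v, w) \le r - j$. Since within $T_v$ the vertex $v$ has exactly $k-1$ neighbors and the subtree branches out with branching factor $k - 1$, there are $(k-1)^s$ vertices at distance $s \ge 1$ from $v$ inside $T_v$. A geometric sum then yields
\[
\bigl|T_v \cap \cN_{v_0}(r)\bigr| \;=\; \frac{(k-1)^{r-j+1} - 1}{k-2}.
\]
Combining with the formula $|\cN_{v_0}(r)| = \frac{k(k-1)^r - 2}{k-2}$ from Lemma~\ref{lemma:trees_exact} and subtracting gives $|T_{v'} \cap \cN_{v_0}(r)|$. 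The desired inequality then reduces after simplification to $k(k-1)^{j-1} \ge 2$, which is trivially true for $k \ge 3$ and $j \ge 1$.

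The main (minor) obstacle is just bookkeeping: making sure the counting of vertices in $T_v$ at each distance is done correctly, and handling the boundary case $j = r$ (where $|T_v \cap \cN_{v_0}(r)| = 1$, consisting only of $v$ itself) which my formula handles automatically. The argument is otherwise a clean induction plus a short combinatorial check.
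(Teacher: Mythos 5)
Your proof is correct, and it takes a genuinely different (and more elementary) route than the paper. Both arguments ultimately reduce to the same local step: pick the neighbor $v'$ of $v$ on the path toward $v_0$, observe that removing the edge $(v',v)$ splits the tree, and show the ``far'' side $T_v \cap \cN_{v_0}(r)$ is no larger than the ``near'' side $T_{v'} \cap \cN_{v_0}(r)$. The paper packages this as a centroid argument: it defines the centroid of $G_r$, invokes the general fact that a tree has at most two (adjacent) centroids, uses the balancedness of the $k$ subtrees hanging off $v_0$ to show $v_0$ is the \emph{unique} centroid, and concludes via a strict-decrease observation for non-centroids. Your version skips the centroid abstraction entirely and instead \emph{computes} both subtree sizes explicitly from the branching factor $k-1$, reducing the inequality to the one-line check $k(k-1)^{j-1} \ge 2$. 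What you lose is the potential generality of the centroid framework (it applies verbatim to non-regular trees where the subtree sizes aren't closed-form); what you gain is a shorter, self-contained argument that avoids citing the uniqueness-of-centroid fact and that makes the role of regularity transparent. Both are valid; yours is arguably cleaner for the specific regular-tree setting at hand. One small writing nit: in the paragraph analyzing $w \in T_{v'}$, you write ``every $v_0 \leftrightarrow w$ path avoids $v$'' but what you actually use is that the $w \leftrightarrow v$ path must pass through $v'$; the conclusion $d(w,v) = d(w,v')+1$ is right, just motivate it with the correct path.
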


To prove the lemma, we will make use a notion of centrality in trees. Let $G_r$ be the finite $k$-regular tree restricted to the vertex set $\cN_{v_0}(r)$. For a given vertex $v$ of $G_r$, label the neighbors of $v$ by $v^1, \ldots, v^k$. Let $\cS_i(v)$ be the set of vertices in $\cN_{v_0}(r)$ such that the path connecting $w$ and $v$ includes $v^i$. Notice that if we root $G_r$ at $v$, the sets $\{\cS_i(v) \}_{i = 1}^k$ correspond to subtrees of the rooted tree and we have the partition
\begin{equation}
\label{eq:subtree_partition}
\cN_{v_0}(r) \setminus \{v \} = \bigcup\limits_{i= 1}^k \cS_i(v).
\end{equation}
We say that $v$ is a {\it centroid} of $G_r$ if 
\begin{equation}
\label{eq:centrality_condition}
\max\limits_{1 \le i \le k} | \cS_i(v) | \le \frac{| \cN_{v_0}(r)|}{2}.
\end{equation}
A consequence of \eqref{eq:centrality_condition} is that if $v$ is {\it not} a centroid and $| \cS_1(v) | = \max_{1 \le i \le k} | \cS_i(v) |$,  we must have $| \cS_1(v) | \ge | \cN_{v_0}(r)|/2 + 1$. Since $\sum_{i = 1}^k | \cS_i(v) | = | \cN_{v_0}(r) | - 1$, it follows that $\sum_{i = 2}^k | \cS_i(v) | \le | \cN_{v_0}(r) |/2 - 2$ which in turn implies
\begin{equation}
\label{eq:not_centroid_condition}
| \cS_1(v) | \ge \sum\limits_{i = 2}^k | \cS_i(v) | + 3.
\end{equation}
In general, a tree may have at most two centroids, in which case the centroids are neighbors \cite[Lemma 2.1]{jog_loh}. This leads us to the following result. 

\begin{proposition}
\label{prop:unique_centroid}
The unique centroid of $G_r$ is $v_0$. 
\end{proposition}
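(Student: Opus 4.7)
The plan is to verify the centroid condition at $v_0$ by symmetry, and then exploit the standard structural fact about trees that the set of centroids either has size one or two (and in the latter case consists of two adjacent vertices) to reduce uniqueness to checking a single easy inequality at a neighbor of $v_0$.

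First I would confirm that $v_0$ is a centroid. Rooting $G_r$ at $v_0$, the $k$ subtrees $\cS_1(v_0), \ldots, \cS_k(v_0)$ rooted at the children of $v_0$ are isomorphic (each is a rooted $(k-1)$-ary tree of depth $r-1$), so they each have the same size, namely $(|\cN_{v_0}(r)|-1)/k$. Since $k \ge 3$, this is at most $|\cN_{v_0}(r)|/3 < |\cN_{v_0}(r)|/2$, so $v_0$ satisfies \eqref{eq:centrality_condition}.

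Next, to upgrade this to uniqueness I would invoke the classical fact that the centroid of a tree either consists of a single vertex or of two adjacent vertices (this is the statement cited from \cite[Lemma 2.1]{jog_loh} just before the proposition). Given that $v_0$ is a centroid, it then suffices to show that no neighbor of $v_0$ is also one. Let $u$ be any such neighbor. When we remove $u$, the component of $G_r \setminus \{u\}$ containing $v_0$ — call its vertex set $\cS_*(u)$ — is precisely $\cN_{v_0}(r)$ minus $u$ and all of its descendants in the $v_0$-rooted tree. The descendants of $u$ including $u$ itself form a rooted $(k-1)$-ary tree of depth $r-1$, which by \eqref{eq:trees_N} has size $\frac{(k-1)^r - 1}{k - 2}$.

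Therefore I would compute
\[
|\cS_*(u)| \;=\; |\cN_{v_0}(r)| \;-\; \frac{(k-1)^r - 1}{k - 2}
\]
and compare it with $|\cN_{v_0}(r)|/2$. Using $|\cN_{v_0}(r)| = 1 + \frac{k((k-1)^r - 1)}{k-2}$, the inequality $|\cS_*(u)| > |\cN_{v_0}(r)|/2$ reduces after clearing denominators to $(k-2)(k-1)^r > 0$, which holds for every $k \ge 3$. So $u$ fails condition \eqref{eq:centrality_condition}, which in view of the preceding paragraph forces $v_0$ to be the unique centroid. The only mildly technical step is this arithmetic inequality, but it is elementary; there is no real obstacle.
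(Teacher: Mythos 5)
Your proof is correct, and it takes a genuinely different path to uniqueness than the paper. Both arguments verify that $v_0$ satisfies the centroid condition via the same balance/symmetry observation, and both invoke the structural fact that a tree has at most two centroids which must be adjacent when there are two. The divergence is in how the second centroid is ruled out: the paper argues by symmetry that if one neighbor of $v_0$ were a centroid then every neighbor $v_0^1,\ldots,v_0^k$ would be (since they are all isomorphic as rooted subtrees), producing at least $k+1 \ge 3$ centroids and contradicting the at-most-two bound. You instead pick an arbitrary neighbor $u$, compute the size of the component of $G_r \setminus \{u\}$ containing $v_0$ explicitly, and show it exceeds $|\cN_{v_0}(r)|/2$, so the centroid inequality \eqref{eq:centrality_condition} fails at $u$. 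The paper's route is slicker and avoids arithmetic entirely; yours is more concrete and does not require one to be careful about what ``isomorphic neighbors'' means, at the modest cost of a geometric-series computation. One small slip: the size $\frac{(k-1)^r-1}{k-2}$ of the subtree hanging below $u$ is the geometric sum $\sum_{j=0}^{r-1}(k-1)^j$, not an application of \eqref{eq:trees_N}, which is the formula for the full $k$-regular ball; the quantity you wrote is nevertheless correct, so this is a misattribution rather than a gap.
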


\begin{proof}
We first show that $v_0$ is indeed a centroid of $G_r$. Notice that if we root $G_r$ at $v_0$, the rooted tree is balanced and in particular, $| \cS_i(v_0) | = (| \cN_{v_0}(r) |-1)/k$ for all $i \in \{1, \ldots, k \}$. Since $k \ge 2$, \eqref{eq:centrality_condition} is satisfied. 

Next, suppose by contradiction that $v_0$ is {\it not} the unique centroid. Without loss of generality, assume that $v_0^1$ is also a centroid. However, since $G_r$ rooted at $v_0$ is balanced, all neighbors of $v_0$ are isomorphic\footnote{More precisely, for each pair of neighbors of $v_0$, we can find a graph homomorphism mapping one neighbor to the other.} so all vertices in the collection $\{v_0^i \}_{i = 1}^k$ must also be centroids. But since $k \ge 2$, this implies that there are at least 3 centroids, which is a contradiction. 
\end{proof}

We are now ready to prove Lemma \ref{lemma:tree_distance_comparison}. 

\begin{proof}[Proof of Lemma \ref{lemma:tree_distance_comparison}]
Without loss of generality, we shall assume that $| \cS_1(v) | = \max_{1 \le i \le k} | \cS_i(v) |$. We can then write
\begin{align}
 \sum\limits_{w \in \cN(v_0, r)}  d(w,v) & \stackrel{(a)}{=} \sum\limits_{w \in \cS_1(v)} d(w, v) + \sum\limits_{i = 2}^k \sum\limits_{w \in \cS_i(v)} d(w,v) \nonumber \\
& \stackrel{(b)}{=} \sum\limits_{w \in \cS_1(v)} ( d(w, v^1) + 1) + \sum\limits_{i = 2}^k \sum\limits_{w \in \cS_i(v)} (d(w, v^1) - 1) \nonumber \\
& = \sum\limits_{i = 1}^k \sum\limits_{w \in \cS_i(v)} d(w, v^1) + |\cS_1(v) | - \sum\limits_{i = 2}^k | \cS_i(v) | \nonumber \\
\label{eq:neighbor_centrality_equivalence}
& \stackrel{(c)}{=} \sum\limits_{w \in \cN(v_0, r)} d(w, v^1) - 1 + | \cS_1(v) | - \sum\limits_{i = 2}^k | \cS_i(v) |,
\end{align}
where $(a)$ and $(c)$ are due to \eqref{eq:subtree_partition}, and $(b)$ follows since $v^1, v$ are neighbors and $v^1$ is closer to $\cS_1(v)$ than $v$, and $v$ is closer to $\cS_i(v)$ than $v^1$ for $1 \le i \le k$. If $v$ is not a centroid, we can apply \eqref{eq:not_centroid_condition} to \eqref{eq:neighbor_centrality_equivalence} to obtain 
$$
\sum\limits_{w \in \cN(v_0, r)} d(w,v)  \ge \sum\limits_{w \in \cN(v_0, r)} d(w, v^1) +2.
$$
In light of Proposition \ref{prop:unique_centroid}, this shows that if $v \neq v_0$, 
$$
\sum\limits_{w \in \cN(v_0, r)} d(w,v) > \min\limits_{u \in \cN_{v_0}(r)} \sum\limits_{w \in \cN_{v_0}(r) } d(w,u).
$$
The only remaining vertex, $v_0$, must therefore be the minimizer. 
\end{proof}

The main result for regular trees follows readily from the intermediate results we have established. 

\begin{lemma}
\label{lemma:tree_distance_bounds}
Let $G$ be a $k$-regular tree and let $v \in V_n$. Then for $n$ sufficiently large, 
\begin{equation}
\label{eq:distance_lower_bound}
\sum\limits_{w \in V_n} d(w, v) \ge \frac{n \log n}{k \log (k - 1)}
\end{equation}
and 
\begin{equation}
\label{eq:distance_upper_bound}
\sum\limits_{w \in V_n} d(w, v)^2 \le \frac{4 n \log^2 n}{\log^2 (k - 1)}.
\end{equation}
\end{lemma}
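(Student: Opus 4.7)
The plan is to handle the two bounds by very different methods: the upper bound follows from the trivial diameter bound on $V_n \subset \cN_{v_0}(r_n+1)$, while the lower bound will be reduced via Lemma \ref{lemma:tree_distance_comparison} to the computation of $\sum_{w \in \cN_{v_0}(r_n)} d(w, v_0)$, which can be evaluated explicitly using the closed form for $|\partial \cN(s)|$ in Lemma \ref{lemma:trees_exact}.

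For the upper bound \eqref{eq:distance_upper_bound}, I would just observe that every $w, v \in V_n \subset \cN_{v_0}(r_n + 1)$ satisfies $d(w,v) \le d(w, v_0) + d(v, v_0) \le 2(r_n + 1)$, so that $\sum_{w \in V_n} d(w,v)^2 \le n \cdot 4(r_n + 1)^2$. Combined with the asymptotic $r_n \sim \log n / \log(k-1)$ from Corollary \ref{cor:trees_rn}, this immediately gives $\sum_{w \in V_n} d(w,v)^2 \le \frac{4 n \log^2 n}{\log^2(k-1)}(1 + o(1))$, which yields \eqref{eq:distance_upper_bound} for $n$ sufficiently large (after absorbing the lower-order term).

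For the lower bound \eqref{eq:distance_lower_bound}, the plan is to first reduce to the case $v \in \cN_{v_0}(r_n)$. When $v \in V_n \setminus \cN_{v_0}(r_n)$, $v$ lies at depth $r_n + 1$ from $v_0$; since every other vertex of $V_n$ is at depth at most $r_n + 1$, the unique tree path from any $w \neq v$ in $V_n$ to $v$ must pass through the parent $\widetilde{v}$ of $v$, giving $d(w,v) = d(w, \widetilde{v}) + 1$. Summing yields $\sum_{w \in V_n} d(w,v) = \sum_{w \in V_n} d(w, \widetilde{v}) + (n-2) \ge \sum_{w \in V_n} d(w, \widetilde v)$, with $\widetilde v \in \cN_{v_0}(r_n)$. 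So it suffices to treat $v \in \cN_{v_0}(r_n)$. There, I use $V_n \supseteq \cN_{v_0}(r_n)$ and Lemma \ref{lemma:tree_distance_comparison} to obtain $\sum_{w \in V_n} d(w, v) \ge \sum_{w \in \cN_{v_0}(r_n)} d(w, v_0)$.

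The final step is the explicit computation: using $|\partial \cN(s)| = k(k-1)^{s-1}$ for $s \ge 1$, I would bound
\[
\sum_{w \in \cN_{v_0}(r_n)} d(w, v_0) = \sum_{s = 1}^{r_n} s\, k (k-1)^{s-1} \ge r_n \cdot k (k-1)^{r_n - 1}.
\]
Combining this with $|\cN(r_n)| \le n$ (so that $(k-1)^{r_n} \ge \tfrac{k-2}{k}(n - 1) + 1 \ge \tfrac{k-2}{k}n$ for large $n$) gives a lower bound of order $\tfrac{(k-2)}{k-1} n r_n$, and finally $r_n \sim \log n / \log(k-1)$ turns this into $(1 + o(1)) \tfrac{(k-2)}{(k-1)\log(k-1)} n \log n$. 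Since $\tfrac{k-2}{k-1} \ge \tfrac{1}{k}$ for all $k \ge 3$, the bound $\tfrac{n \log n}{k \log(k-1)}$ follows for $n$ sufficiently large. The only mild subtlety is making sure the asymptotics from Lemma \ref{lemma:trees_exact} and Corollary \ref{cor:trees_rn} absorb the $o(1)$ terms uniformly; this is straightforward because the required constant $1/k$ leaves a multiplicative slack factor of $\tfrac{k(k-2)}{k-1} > 1$.
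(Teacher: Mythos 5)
Your overall strategy mirrors the paper's: the diameter bound for the squared sum, and Lemma \ref{lemma:tree_distance_comparison} plus the single-layer estimate $\sum_{w \in \cN_{v_0}(r_n)} d(w, v_0) \ge r_n|\partial\cN(r_n)|$ for the lower bound. Your explicit reduction to $v \in \cN_{v_0}(r_n)$ via the parent $\widetilde v$ is a welcome clarification, since Lemma \ref{lemma:tree_distance_comparison} as stated only compares over vertices inside $\cN_{v_0}(r_n)$ and the paper applies it to all of $V_n$ without comment.

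However, there is a real error in the final step of the lower bound. You assert that $|\cN(r_n)| \le n$ gives $(k-1)^{r_n} \ge \frac{k-2}{k}(n-1)+1$, but the implication points the other way: from $|\cN(r_n)| = 1 + \frac{k}{k-2}\left((k-1)^{r_n}-1\right) \le n$ one deduces the \emph{upper} bound $(k-1)^{r_n} \le \frac{k-2}{k}(n-1)+1$. A lower bound on $(k-1)^{r_n}$ must come from the other half of Assumption \ref{as:candidate_set}, namely $n < |\cN_{v_0}(r_n+1)|$, and is a factor of $\frac{1}{k-1}$ weaker: $(k-1)^{r_n} > \frac{1}{k-1}\left(\frac{k-2}{k}(n-1)+1\right)$. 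Inserting the corrected bound into $r_n k(k-1)^{r_n-1}$ gives a lower estimate of order $\frac{k-2}{(k-1)^2}\,n r_n$ rather than the $\frac{k-2}{k-1}\,n r_n$ you wrote, and since $r_n \sim \log n/\log(k-1)$ the resulting ratio to the target $\frac{n\log n}{k\log(k-1)}$ is $\frac{k(k-2)}{(k-1)^2}$, which is strictly less than $1$ for every $k \ge 3$. So the single-term bound $r_n|\partial\cN(r_n)|$, read with correct inequalities, does not clear the claimed constant; for example $k=3$, $n=189$ gives $r_n = 5$ and $r_n|\partial\cN(r_n)| = 240$, whereas $\frac{n\log n}{k\log(k-1)} \approx 476$. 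The fix is to keep the full sum $\sum_{s=1}^{r_n} s\,|\partial\cN(s)|$, which asymptotically exceeds $r_n|\partial\cN(r_n)|$ by the geometric factor $\frac{k-1}{k-2}$ and so replaces the deficient ratio by $\frac{k}{k-1} > 1$. (For what it is worth, the paper's own step $(b)$ writes $\frac{1}{(k-2)^2}$ where Corollary \ref{cor:trees_rn} only justifies $\frac{1}{(k-1)^2}$, so the published argument needs the same repair; you reproduced the gap and added an inequality flip on top of it.)
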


\begin{proof}
Noting that $\cN_{v_0}(r_n) \subseteq V_n$, we have 
\begin{align*}
\sum\limits_{w \in V_n} d(w, v_0) & \ge r_n | \partial \cN(r_n)| \\ & \stackrel{(a)}{=} r_n k(k-1)^{r_n - 1} \\
& \stackrel{(b)}{\ge} \frac{r_n k}{(k-2)^2} \left( \frac{k-2}{k}(n-1) + 1 \right) \\
& \stackrel{(c)}{\ge} \frac{n \log n}{k \log (k-1)}.
\end{align*}
Above, $(a)$ is due to \eqref{eq:trees_partialN}, $(b)$ uses the formula for $r_n$ in Corollary \ref{cor:trees_rn} and $(c)$ holds for $n$ sufficiently large, as it lower bounds the first-order term in the previous expression. Equation \eqref{eq:distance_lower_bound} now follows from an application of Lemma \ref{lemma:tree_distance_comparison}. 

Next, , we upper bound the squared sum of the distances. Since $V_n \subset \cN_{v_0}(r_n + 1)$, the diameter of $V_n$ is at most $2r_n + 2$. It follows that, for $n$ sufficiently large, 
$$
\sum\limits_{w \in V_n} d(w, v)^2 \le n (2r_n + 2)^2 \sim  \frac{4 n \log^2 n}{\log^2 (k - 1)},
$$
where we have used the asymptotic behavior of $r_n$ derived in Corollary \ref{cor:trees_rn}. 
\end{proof}

\subsection{Lattices}

We first prove an intermediate results. The following lemma is an analogue of Lemma \ref{lemma:tree_distance_comparison} for the case of lattices. 

\begin{lemma}
\label{lemma:lattice_distance_comparison}
Let $G$ be a $\ell$-dimensional lattice, let $v_0 \in V$ and let $r$ be a positive integer. Then for all $v \in V$, 
\begin{equation}
\label{eq:lattice_distance_comparison}
\sum\limits_{w \in \cN_{v_0}(r)} d(w,v) \ge \sum\limits_{w \in \cN_{v_0}(r)} d(w, v_0).
\end{equation}
\end{lemma}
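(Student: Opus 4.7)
The plan is to exploit the central symmetry of the lattice ball $\cN_{v_0}(r)$ around its center. By translation invariance of the $\ell_1$ metric on $\Z^\ell$, I may assume without loss of generality that $v_0 = 0$, so that $\cN_{v_0}(r) = \{w \in \Z^\ell : \|w\|_1 \le r\}$ is symmetric under the involution $w \mapsto -w$; this involution is a bijection of $\cN_0(r)$ onto itself.

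The key pointwise inequality is the following coordinate-wise triangle inequality: for any real numbers $a,b$,
$$
|a - b| + |a + b| \ge |(a-b) - (-(a+b))| = 2|a|.
$$
Summing this over coordinates with $a = w_i$ and $b = v_i$ gives
\begin{equation}
\label{eq:pairwise_l1}
\|w - v\|_1 + \|w + v\|_1 \ge 2 \|w\|_1
\end{equation}
for every $w, v \in \Z^\ell$.

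Now I would write the target sum in symmetrized form. Using that $w \mapsto -w$ is a bijection of $\cN_0(r)$,
$$
\sum_{w \in \cN_0(r)} d(w,v) = \sum_{w \in \cN_0(r)} \|w - v\|_1 = \tfrac{1}{2} \sum_{w \in \cN_0(r)} \bigl( \|w - v\|_1 + \|-w - v\|_1 \bigr) = \tfrac{1}{2} \sum_{w \in \cN_0(r)} \bigl( \|w - v\|_1 + \|w + v\|_1 \bigr).
$$
Applying \eqref{eq:pairwise_l1} termwise, the right-hand side is at least
$$
\tfrac{1}{2} \sum_{w \in \cN_0(r)} 2\|w\|_1 = \sum_{w \in \cN_0(r)} \|w\|_1 = \sum_{w \in \cN_0(r)} d(w, 0),
$$
which is exactly $\sum_{w \in \cN_{v_0}(r)} d(w, v_0)$ in the original coordinates. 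This proves the claim.

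I do not expect any real obstacle; the argument is a clean symmetrization that mirrors the centroid reasoning used for trees in Lemma \ref{lemma:tree_distance_comparison}, but adapted to the $\ell_1$ geometry where the ``centroid'' is literally the center of the ball. The only minor technical point is being careful that $w \mapsto -w$ really does stabilize $\cN_0(r)$, which is immediate because $\|w\|_1 = \|-w\|_1$, and that \eqref{eq:pairwise_l1} is applied coordinatewise before summing; both are routine.
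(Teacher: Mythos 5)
Your proof is correct. The paper's argument is different: it decomposes $\sum_{w \in \cN_{v_0}(r)} d(w,v) = \sum_{i=1}^\ell \sum_{w \in \cN_{v_0}(r)} |v_i - w_i|$ coordinatewise and then invokes the fact that the minimizer of $\sum_w |v_i - w_i|$ over $v_i$ is a median of the multiset $\{w_i\}_{w \in \cN_{v_0}(r)}$, which equals $(v_0)_i$ by the symmetry of the ball. Your symmetrization route replaces the median characterization with the elementary two-point bound $\|w-v\|_1 + \|w+v\|_1 \ge 2\|w\|_1$ combined with the involution $w \mapsto -w$ on $\cN_{v_0}(r)$. Both proofs hinge on the same structural fact (the ball is centrally symmetric about $v_0$), but yours is self-contained, using only the triangle inequality, whereas the paper's imports the optimality of the median as a known lemma. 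One small stylistic advantage of the paper's version is that it makes explicit that the minimization separates across coordinates, which the authors reuse as intuition; one small advantage of yours is that it visibly produces a nonnegative ``excess'' term $\tfrac12\sum_w(\|w-v\|_1 + \|w+v\|_1 - 2\|w\|_1)$ that could be used to quantify strictness of the inequality. Either is fully rigorous.
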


\begin{proof}
Assume that vertices are labelled by their coordinates in $\R^d$. It follows that for $u, v \in V$, $d(u,v) = \norm{u - v}_1$. We can then write, for any $v \in V$, 
$$
\sum\limits_{w \in \cN_{v_0}(r) } d(w,v) = \sum\limits_{w \in \cN_{v_0}(r)} \norm{w - v}_1 = \sum\limits_{i = 1}^d \sum\limits_{w \in \cN_{v_0}(r)} | v_i - w_i |.
$$ 
The value of $v_i$ that minimizes $\sum_{w \in \cN_{v_0}(r)} | v_i - w_i |$ is the median of the collection $\{w_i \}_{w \in \cN_{v_0}(r)}$, which is $(v_0)_i$ (the $i$th component of the vector $v_0$) due to the symmetry of the set $\cN_{v_0}(r)$. As this argument holds for each $i$, \eqref{eq:lattice_distance_comparison} follows. 
\end{proof}

The following result contains the desired bounds for $\sum_{w \in V_n} d(w,v)$ and $\sum_{w \in V_n} d(w,v)^2$. 

\begin{lemma}
\label{lemma:geodesic_lattice}
Let $G$ be a $\ell$-dimensional lattice. There exist constants $c_3, c_4 > 0$ depending only on $d$ such that for all $v \in V_n$, 
\begin{align}
\label{eq:lattice_geodesic_lower_bound}
\sum\limits_{w \in V_n} d(w,v) & \ge c_3 n^{1 + \frac{1}{\ell}};  \\
\label{eq:lattice_geodesic_upper_bound}
 \sum\limits_{w \in V_n} d(w,v)^2 & \le c_4 n^{1 + \frac{2}{\ell}}.
\end{align}
\end{lemma}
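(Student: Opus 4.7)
The plan is to handle the two bounds separately, using the combinatorial asymptotics already established in Appendix \ref{sec:size_of_neighborhoods} together with Lemma \ref{lemma:lattice_distance_comparison} and Corollary \ref{cor:lattice_rn}.

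For the lower bound \eqref{eq:lattice_geodesic_lower_bound}, I would first reduce to a sum that can be evaluated exactly by exploiting symmetry. Since $\cN_{v_0}(r_n) \subseteq V_n$, every distance in the sum is nonnegative, so
$$
\sum_{w \in V_n} d(w,v) \;\ge\; \sum_{w \in \cN_{v_0}(r_n)} d(w,v) \;\ge\; \sum_{w \in \cN_{v_0}(r_n)} d(w, v_0),
$$
where the second inequality is exactly Lemma \ref{lemma:lattice_distance_comparison}. Now the right-hand side is independent of $v$ and can be rewritten by layering:
$$
\sum_{w \in \cN_{v_0}(r_n)} d(w, v_0) = \sum_{s = 0}^{r_n} s \cdot |\partial \cN(s)|.
$$
From \eqref{eq:lattice_partialN}, $|\partial \cN(s)| \sim \frac{2^\ell}{(\ell-1)!} s^{\ell - 1}$, and an integral comparison gives $\sum_{s=0}^{r_n} s^\ell \sim r_n^{\ell + 1}/(\ell+1)$. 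Combining with Corollary \ref{cor:lattice_rn}, which states $r_n \sim (\ell!/2^\ell)^{1/\ell} n^{1/\ell}$, the layered sum is asymptotic to a constant (depending only on $\ell$) times $n^{1 + 1/\ell}$. Taking $c_3$ to be any constant strictly smaller than this asymptotic constant yields the bound for all $n$ sufficiently large (and then possibly shrinking $c_3$ further to cover small $n$).

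For the upper bound \eqref{eq:lattice_geodesic_upper_bound}, the argument is even more direct: since $V_n \subseteq \cN_{v_0}(r_n + 1)$, any two vertices in $V_n$ are within distance $2(r_n + 1)$ of each other, so $d(w,v)^2 \le 4(r_n + 1)^2$ uniformly over $w \in V_n$. Summing over the $n$ terms and applying Corollary \ref{cor:lattice_rn} once more gives
$$
\sum_{w \in V_n} d(w,v)^2 \;\le\; 4n(r_n + 1)^2 \;\sim\; 4 (\ell!/2^\ell)^{2/\ell}\, n^{1 + 2/\ell},
$$
which produces the constant $c_4$.

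There is no real obstacle here: both bounds follow from standard counting once the symmetric-set reduction (Lemma \ref{lemma:lattice_distance_comparison}) is invoked for the lower bound. The only mild care required is to absorb the lower-order corrections in the asymptotics of $|\partial \cN(s)|$ and $r_n$ into the constants $c_3, c_4$, which is done by choosing these constants a bit smaller (resp.\ larger) than the leading coefficients so that the inequalities hold for every $n$, not just asymptotically.
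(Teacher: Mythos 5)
Your proposal is correct and follows essentially the same route as the paper: reduce the lower bound to a sum centered at $v_0$ via Lemma \ref{lemma:lattice_distance_comparison}, evaluate it by layering over distance shells using the asymptotics of $|\partial\cN(s)|$ and $r_n$, and get the upper bound from the crude diameter estimate $d(w,v) \le 2(r_n+1)$. If anything, you are slightly more careful than the paper in making the chain $\sum_{w \in V_n} d(w,v) \ge \sum_{w \in \cN_{v_0}(r_n)} d(w,v) \ge \sum_{w \in \cN_{v_0}(r_n)} d(w,v_0)$ explicit and in noting that the constants must absorb the subleading terms, both of which the paper leaves implicit.
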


\begin{proof}
\begin{align*}
\sum\limits_{w \in V_n} d(w, v_0) & \ge \sum\limits_{w \in \cN_{v_0}(r_n)} d(w, v_0) \\
& = \sum\limits_{k = 1}^{r_n} k | \partial \cN(k) | \\
& \stackrel{(a)}{\sim} \frac{2^\ell}{(\ell - 1)!} \sum\limits_{k = 1}^{r_n} k^\ell \\
& \sim \frac{\ell 2^\ell}{(\ell + 1)!} r_n^{\ell + 1} \\
& \stackrel{(b)}{\sim} \frac{\ell 2^\ell}{(\ell + 1)!} \left( \frac{\ell!}{2^\ell} n \right)^{1 + \frac{1}{\ell}}.
\end{align*}
Above, $(a)$ follows from the formula for $| \partial \cN(k) |$ in \eqref{eq:lattice_partialN} and $(b)$ is due to the asymptotics of $r_n$ given in Corollary \ref{cor:lattice_rn}. Equation \eqref{eq:lattice_geodesic_lower_bound} follows. 

Next, we upper bound the sum of the squared distances. Noting that the diameter of $V_n$ is at most $2(r_n + 1)$, we have
$$
\sum\limits_{w \in V_n} d(w, v)^2 \le 4(r_n + 1)^2 n \sim 4 \left( \frac{\ell!}{2^\ell} \right)^{\frac{2}{\ell}} n^{1 + \frac{2}{\ell}},
$$
where the asymptotics of the final expression are obtained from the asymptotics of $r_n$ found in Corollary \ref{cor:lattice_rn}. Equation \eqref{eq:lattice_geodesic_upper_bound} follows. 
\end{proof}

\bibliographystyle{abbrv}
\bibliography{citations}

\end{document}